\newcommand{\jmp}[1]{\left[\!\left[#1\right]\!\right]}                     % jump [[ ]]  use as \jmp{A\def\eye{{\rm{}i}}
\newcommand{\avg}[1]{\left\{\!\!\left\{#1\right\}\!\!\right\}}                   % javg [[ ]]  use as \jmp{A\def\eye{{\rm{}i}}
\newcommand{\cEI}{\cal{E}_I}
\newcommand{\cEA}{\cal{E}_A}
\newcommand{\cED}{\cal{E}_D}
\newtheorem{theorem}{Theorem}[section]
\newtheorem{lemma}[theorem]{Lemma}
\newenvironment{remark}[1][Remark]{\begin{trivlist}
\item[\hskip \labelsep {\bfseries #1}]}{\end{trivlist}}
\newcommand{\bfv}{\mathbf{v}}
\definecolor{lightgray}{gray}{0.75}
\begin{document}

\placeTitlePage

\begin{outline}
We present a study of two residual a posteriori error indicators for the Plane Wave Discontinuous Galerkin (PWDG) method for the Helmholtz equation.  In particular we study the $h$-version of PWDG in which the number of plane wave directions per element is kept fixed.  First we use a slight modification of the appropriate a priori analysis to determine a residual indicator.  Numerical tests show that this is reliable but pessimistic in that the ratio between the true error and the indicator increases as the mesh is refined.  We therefore introduce a new analysis based on the observation that 
sufficiently many plane waves can approximate piecewise linear functions as the mesh is refined.   Numerical results demonstrate an improvement in the efficiency of the
indicators.
\end{outline}

\tableofcontents

\section{Introduction}
\label{introduction.sec}
% \input introduction.tex
% !TEX root =  adaptadg.tex
We shall investigate the use of an adaptive plane wave discontinuous Galerkin (PWDG) method for approximating the solution of the Helmholtz equation with mixed boundary conditions. In particular, given a bounded Lipschitz polyhedral domain $\Omega\subset\mathbb{R}^2$ with boundary $\Gamma$ consisting of two disjoint components
$\Gamma_D$ and $\Gamma_A$ and unit outward normal $\nu$ we want to approximate the solution $u$ of
\begin{eqnarray}
\Delta u + \kappa^2 u&=&0\quad\mbox{ in }\Omega\label{eq:helmholtz},\\
\frac{\partial u}{\partial \nu}-i\kappa u&=&g_A\quad\mbox{ on }\Gamma_A\label{eq:impedance},\\
u&=&0\quad\mbox{ on }\Gamma_D.\label{eq:dirichlet}
\end{eqnarray}
Here the wave number $\kappa>0$ and $g_A\in L^2(\Gamma)$ is a given function.  This problem is often considered 
because the Robin boundary condition (\ref{eq:impedance}) is a simple absorbing boundary condition, so the problem serves as a simplified model for scattering from a bounded domain (in the scattering example, $g_A$ is determined by the incident field).  In particular we note
that this problem is considered in \cite{HMP13} which has motivated part of our study.
We could also include piecewise constant coefficients in the partial differential
equation without any complication of the algorithm, but the proofs we shall present require constant coefficients.  

The PWDG method we shall consider is a generalization of the Ultra Weak Variational Formulation  of the Helmholtz equation due to Cessenat and Despr\'es  \cite{cessenat_phd,despres}.  This method uses piecewise solutions of the Helmholtz equation in a non-standard variational scheme on a finite element mesh to approximate the trace of  $u$ and the normal derivative of $u$ on edges in the mesh.  In \cite{buf07} this was recognized to be a equivalent to a discontinuous Galerkin method, and this observation was then used
to prove error estimates under restrictive conditions on the domain and mesh.  At the same time, a generalized discontinuous Galerkin method based on possibly mesh dependent penalty
parameters was analyzed using a classical approach in \cite{git07}, and later using the approach of \cite{buf07} in \cite{HMP11} where error estimates for a $p$-version exhibiting wave number dependence and a more precise estimate of approximation 
properties.  In \cite{HMPS12} exponential convergence of the $hp$ PWDG method to smooth solutions is proved.  Particularly important for this paper is the analysis in \cite{HMP13} where special penalty parameters are chosen that allow the derivation of an error estimate even on highly refined grids.

For background on PWDG and other methods using plane wave solutions for the Helmholtz equation a useful paper
is \cite{MelenkEsterhazy12}. For computational aspects, see \cite{HMP13I} and for a dispersion analysis see \cite{GH14}.  The UWVF or its PWDG generalization have been applied to Maxwell's equations \cite{cessenat_phd,HMM06,HMP13M}, to the
linear elastic Navier equation \cite{LHM13} and to the biharmonic problem \cite{LHM14}.

We are interested in deriving a posteriori error indicators based on residuals to drive the PWDG method adaptively to a solution.  To our knowledge this is the first study of adaptivity for these methods. Ideally this study would include adaptivity in the number and direction of the basis functions per element (like $p$-adaptivity for polynomial
methods) and also mesh refinement or $h$-adaptivity.  The former type of adaptivity is currently difficult to attain and is not the subject
of this paper.  Instead we shall concentrate on more classical $h$-adaptivity where we fix the number of basis functions per element and only refine the mesh.

We start from the observation that the estimates in \cite{HMP13} can easily be modified to give a residual based a posteriori error estimator for the $L^2$ norm.  This is done in Section \ref{sec:estimate}.  We then test these estimates on a model problem with a smooth solution.  We find that the estimator is reliable, but not efficient.  It progressively over estimates the global $L^2$  norm error.  Despite this, in the case of a smooth solution, the refinement path produces an optimal order  approximation.  

It is clear from these numerical results (and the numerical experiments in \cite{cessenat_phd}) that both the a priori and the a posteriori theory  are not optimal with respect to the mesh width.  We therefore revisit the derivation of the residual indicator.  In particular we note that from Lemma 3.10 in \cite{git07}, 
on a refined mesh, sufficiently many plane wave basis functions can approximate piecewise linear finite element functions.
This allows us to improve powers of the mesh size appearing in the a posteriori indicators.  The theory behind this observation is presented in Section \ref{sec:refest}. We then test the new indicators in Section \ref{resultsII.sec}.  The resulting residual estimators are seen to be an improvement over those in Section \ref{sec:estimate}. We then draw 
some conclusions and discuss possible extensions of this theory in Section \ref{conclusion.sec}.

\section{Notation and Preliminaries}
\label{notation.sec}
%\input notation.tex
% !TEX root =  adaptadg.tex
We generally adopt the notation from \cite{HMP13}. The domain $\Omega$ is assumed to be  Lipschitz smooth and to be an annular region in the sense that there are
polygons $\Omega_A$ with boundary $\Gamma_A$ and $\Omega_D$ with boundary $\Gamma_D$ with connected boundaries such that the closure $\overline{\Omega}_D$ is strictly contained in $\Omega_A$. Then
\[
\Omega=\Omega_A\setminus\overline{\Omega_D}.
\]
To prove existence of a solution to (\ref{eq:helmholtz})-(\ref{eq:dirichlet}), the following space is used:
\[
H_{\Gamma_D}(\Omega)=\left\{u\in H^1(\Omega)\;|\; u=0 \mbox{ on }\Gamma_D\right\}.
\]
The norm used is weighted with the wave-number:
\[
\Vert u\Vert_{1,\kappa,\Omega}^2=\kappa^2\Vert u\Vert_{L^2(\Omega)}^2+\Vert \nabla u\Vert_{L^2(\Omega)}^2.
\]

Then it is shown in \cite[Theorem 2.1]{HMP13} that there exists a weak solution to
the above mentioned problem.  In addition if $d_\Omega$ is the diameter of $\Omega$ then
\[
\Vert u\Vert_{1,\kappa,\Omega}\leq Cd_{\Omega}^{1/2}\Vert g_A\Vert_{L^2(\Gamma_A)}.
\]

We assume that $\Omega$ is covered by a family of meshes ${\cal T}_h$ indexed by the maximum diameter of the elements in the mesh so that $h>0$, and for any element $K\in {\cal T}_h$ we set
\[
h_K={\rm diam}(K)
\] 
where ${\rm diam}(K)$ is the diameter of the smallest circumscribed circle containing $K$.

Because we wish to derive a posteriori error indicators on refined meshes we follow \cite{HMP13} and make the next three assumptions on the mesh: 
\begin{description}
\item[\em Shape Regularity:]  For any element $K\in {\cal T}_h$ let $\rho_K$ denote the diameter of the largest inscribed circle in $K$. Then there exists a constant $\sigma$ independent of $h$ such that for all $K$ in ${\cal T}_h$, $h_K/\rho_K\leq \sigma$.

\item[\em Local quasi-uniformity:]  Suppose $K_1, K_2\in {\cal T}_h$ meet on an edge $e$.  Then there exists a constant $\tau$ independent of $h$ such that
\[
\tau^{-1}\leq \frac{h_{K_1}}{h_{K_2}}\leq \tau
\]
for all such choices of $K_1$ and $K_2$.

\item[\em Quasi-uniformity close to $\Gamma_A$:]
There exists a constant $\tau_A$ such that for all $h$ and  all $K\in{\cal T}_h$ such that $K$ shares an edge with $\Gamma_A$
\[
\frac{h}{h_K}\leq \tau_A. 
\]
\end{description}
As pointed out in \cite{HMP13} the goal is to refine the grid around the scatter where the Dirichlet boundary condition occurs.  The impedance boundary condition models an outgoing radiation condition and so uniform refinement should occur near that boundary.

We make one other major assumption because we need to use results from \cite{HMP13} that depend on it: we assume that the triangles in the grid are all affine images of a finite number of reference elements.  This may be less of a concern for an adaptive method because the elements are obtained from a refinement of an initial coarse mesh, however in our numerical results we cannot ensure that this assumption holds.

Suppose $K^{\pm}$ are a pair of elements sharing a common edge $e$ and having outward normals $\nu^{\pm}$ respectively.  We define the jumps and averages of a suitably smooth function $v^\pm$ defined on each element by
\[
\avg{v}=\frac{1}{2}(v^++v^-)|_e,\qquad \jmp{v}=(v^+\nu^++v^-\nu^-)|_e.
\]
Similarly for a piecewise defined vector function $\bfv^\pm$ we define
\[
\avg{\bfv}=\frac{1}{2}(\bfv^++\bfv^-)|_e,\qquad \jmp{\bfv}=(\bfv^+\cdot\nu^++\bfv^-\cdot\nu^-)|_e.
\]
The set of interior edges of elements in ${\cal T}_h$ is denoted ${\cal E}_I$.  Edges on the boundary $\Gamma_A$ are denoted ${\cal E}_{A}$ and on $\Gamma_D$ by ${\cal E}_D$.
In later sections we shall make frequent use of the following trace inequality, for any edge $e$ of a triangle $K$ and any $w\in H^1(K)$ there exists a constant $C$ independent of $K$ and $w$ such that
\begin{equation}
\Vert w\Vert_{L^2(e)}^2\leq C\left(\frac{1}{h_K}\Vert w\Vert_{L^2(K)}^2+h_K\Vert \nabla w\Vert_{L^2(K)}^2\right).\label{trace}
\end{equation}
In addition under the assumptions on the mesh (in particular that all elements are the affine image of a finite number of
reference elements) estimate (24) of \cite{HMP13} states that if $w\in H^{s+3/2}(K)$ for some $1/2\geq s>0$ then
\begin{equation}
\Vert\nabla w\Vert_{L^2(e)}\leq C\left(h_K^{-1}\Vert\nabla w\Vert_{L^2(K)}^2+h_K^{2s}\vert\nabla w\vert_{H^{1/2+s}(K)}^2\right),
\label{trace2}
\end{equation}
where $|\cdot|_{H^{1/2+s}(K)}$ is the $H^{1/2+s}(K)$ semi-norm.

The PWDG method used here is based on the use of plane waves propagating in different directions on each element.  Let $p_K$ denote the number of directions used
on element $K$ and this will be fixed in this paper.  We use uniformly spaced directions on the unit circle
\[
\mathbf{d}_j^K=(\cos(\theta^K_j),\sin(\theta^K_j)),\;1\leq j\leq p_K
\]
where $\theta_j^K=2\pi j/p_K$.  On an element $K$ the local solution space is
\[
V^K_{P_K}=\mbox{span}\left\{\exp(ik\mathbf{d}_j^K\cdot\mathbf{x}),\quad 1\leq j\leq p_K\right\}.
\]
Then the global solution space is
\begin{equation}
V_{h}=\left\{ u_{h}\in L^2(\Omega)\;|\; u_{h}|_K\in V^K_{p_K}\mbox{ for all }K\in {\cal T}_h\right\}.\label{eq:Vdef}
\end{equation}

\section{The PWDG method}
\label{formulation.sec}
%\input formulation.tex
% !TEX root =  adaptadg.tex
Since our code is based on the discontinuous Galerkin approach to discretizing first order systems, we give a brief
derivation of PWDG starting by introducing a vector variable $\sigma$ such that
\[
i\kappa \sigma=\nabla u, \mbox { so } i\kappa u=\nabla\cdot\sigma.
\]
Multiplying these equations by the complex conjugate of smooth test functions $v$ and $\tau$, integrating over
an element $K$ and adding the results we obtain
\[
\int_{K}\left\{i\kappa\sigma\cdot\overline{\tau}+u\nabla\cdot\overline{\tau}+i\kappa u\overline{v}+\sigma\cdot\nabla\overline{v}\right\}\,dA=\int_{\partial K} \left\{u\overline{\tau\cdot\nu}
+ \sigma\cdot\nu \overline{v}\right\}\,ds.
\]
Rearranging, and replacing the boundary flux terms $(u,\sigma\cdot\nu)$ on the right hand side by consistent numerical fluxes $(u,\sigma\cdot\nu)$ in the normal way (we will specify the fluxes shortly), we obtain
\[
\int_{K}\left\{\sigma\cdot(\overline{-i\kappa\tau+\nabla v})+u(\overline{-i\kappa v+\nabla\cdot\tau})\right\}\,dA=\int_{\partial K} \left\{\hat{u}\overline{\tau\cdot\nu}
+ \hat{\sigma}\cdot\nu \overline{v}\right\}\,ds.
\]
Finally assuming the test functions also satisfy the first order system corresponding to
the Helmholtz equation, 
\[
i\kappa\tau=\nabla v, \mbox { so } i\kappa v=\nabla\cdot\tau.
\]
we obtain
\begin{equation}
\int_{\partial K} \left\{\hat{u}\overline{\tau\cdot\nu}
+ \hat{\sigma}\cdot\nu \overline{v}\right\}\,ds=0\label{eq:fund}
\end{equation}
on each element in the mesh.

It remains to detail the fluxes. We follow \cite{HMP13}. On an interior edge in the mesh we take the numerical fluxes to be
\begin{eqnarray*}
\hat{u}&=&\avg{u}-\frac{\beta}{{\rm{}i}\kappa}\jmp{\nabla_h u},\\
ik\widehat{\sigma}&=&\avg{\nabla_h u}-{\rm{}i}\kappa\alpha \jmp{u}.
\end{eqnarray*}
Here $\nabla_hu$ is the broken (piecewise) gradient.  On  boundary edges on $\Gamma_A$ the fluxes are
\begin{eqnarray*}
\hat{u}&=&u-\delta ((i\kappa)^{-1}\nabla_hu\cdot\nu -u-(i\kappa)^{-1}g_A),\\
i\kappa\widehat{\sigma}&=&\nabla_h u-(1-\delta)(\nabla_h u-i\kappa u\nu-g_A\nu).
\end{eqnarray*}
Finally on edges on the Dirichlet portion of the boundary the fluxes are
\begin{eqnarray*}
\hat{u}&=&0,\\
i\kappa\widehat{\sigma}&=&\nabla_h u-\alpha i\kappa u\nu.
\end{eqnarray*}

Adding (\ref{eq:fund}) over all elements in the mesh and using the above fluxes, we obtain the PWDG method of \cite{HMP13}.  In particular let
\begin{eqnarray}
&&A_h(u,v)=
\int_{{\cal E}_I}\avg{u}\jmp{\nabla_h\overline{v}}\,ds
-\int_{{\cal E}_A}\delta(\nabla_h u\cdot\nu)\overline{v}\,ds
-\int_{{\cal E}_I}\jmp{\overline{v}}\cdot\avg{\nabla_hu}\,ds\label{eq:Adef}\\&&+\int_{{\cal E}_A}(1+\delta) u(\nu\cdot\nabla_h\overline{v})\,ds-\frac{1}{i \kappa}\int_{{\cal E}_I}\beta\jmp{\nabla_h u}\jmp{\nabla_h\overline{v}}\,ds-\frac{1}{i\kappa}\int_{{\cal E}_A}\delta(\nu\cdot\nabla_hu)(\nu\cdot\nabla_h\overline{v})\,ds\nonumber\\&&
+i\kappa\int_{{\cal E}_I}\alpha\jmp{u}\cdot\jmp{\overline{v}}\,ds-i\kappa\int_{{\cal E}_A}(1-\delta)u\overline{v}\,ds-\int_{{\cal E}_D}(\nabla_hu\cdot\nu)\overline{v}\,ds+\int_{{\cal E}_D}\alpha i\kappa u\overline{v}\,ds.\nonumber
\end{eqnarray}
and
\begin{eqnarray}
\ell(v)=-\frac{1}{i\kappa}\int_{{\cal E}_A}\delta g_A(\nu\cdot\nabla_h \overline{v})\,ds+\int_{{\cal E}_A}(1-\delta)g_A\overline{v}\,ds.\label{eq:elldef}
\end{eqnarray}
An important contribution of \cite{HMP13} is that on a refined mesh the coefficients should be chosen as follows.  Let $e$ be an edge in the mesh having length $h_e$ then
\begin{eqnarray}
\alpha|_e&=&\frac{ah}{h_e},\;\beta|_e=\frac{bh}{h_e},\;\delta|_e=\frac{dh}{h_e}\leq \frac{1}{2},
\end{eqnarray}
where $a,b,d$ are positive constants. Then the discrete solution $u_{h}\in V_{h}$ satisfies
\[
A_h(u_{h},v)=\ell(v)\qquad\forall v\in V_{h}.
\]
In \cite{HMP13} it is shown that this equation has a solution regardless of the mesh size and wave number, and a global $L^2(\Omega)$ norm error estimate is proved.  Therefore we shall concentrate on a posteriori estimates for the global $L^2$ norm here.

We now recall an equivalent form of the sesquilinear form $a_h(\cdot,\cdot)$ based on using the following  ``magic lemma'':
\begin{lemma}[Lemma 6.1 from \cite{MelenkEsterhazy12}] \label{magic} For any  sufficiently smooth piecewise  defined vector field $\sigma$ and piecewise defined function
$v$
\[
\sum_{K\in T_h}\int_{\partial K}v\sigma\cdot n\,ds=
\int_{\cEI}\jmp{v}\cdot\avg{\sigma}\,ds+\int_{\cEI}\avg{v}\cdot\jmp{\sigma}\,ds+\int_{\cEA\cup\cED}v\mathbf{\nu}\cdot\sigma\,ds
\]
\end{lemma}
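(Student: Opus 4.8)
The plan is to prove the identity by localizing to a single interior edge and then summing. First I would observe that the left-hand side $\sum_{K}\int_{\partial K} v\,\sigma\cdot n\,ds$ decomposes edge-by-edge: each interior edge $e\in\cEI$ is shared by exactly two elements $K^+$ and $K^-$, and each boundary edge (in $\cEA$ or $\cED$) belongs to a single element. So the entire content of the lemma reduces to checking, on a single interior edge $e$ with the two adjacent contributions, that
\[
v^+\sigma^+\cdot\nu^+ + v^-\sigma^-\cdot\nu^- = \jmp{v}\cdot\avg{\sigma} + \avg{v}\,\jmp{\sigma},
\]
while on a boundary edge the single term $v\,\sigma\cdot\nu$ already matches the last integral on the right. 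Summing these edgewise identities over all edges then produces the three integrals in the statement.

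The heart of the matter is therefore the purely algebraic edgewise identity. I would expand the right-hand side using the definitions of jump and average from Section \ref{notation.sec}. Recall $\jmp{v}=(v^+\nu^+ + v^-\nu^-)$ and $\avg{\sigma}=\tfrac12(\sigma^++\sigma^-)$, so that
\[
\jmp{v}\cdot\avg{\sigma} = \tfrac12\left(v^+\nu^++v^-\nu^-\right)\cdot\left(\sigma^++\sigma^-\right),
\]
and similarly $\avg{v}=\tfrac12(v^++v^-)$ with $\jmp{\sigma}=(\sigma^+\cdot\nu^++\sigma^-\cdot\nu^-)$ gives
\[
\avg{v}\,\jmp{\sigma} = \tfrac12\left(v^++v^-\right)\left(\sigma^+\cdot\nu^++\sigma^-\cdot\nu^-\right).
\]
Adding these two expressions and expanding, I expect the four "diagonal" terms (those pairing the same superscript, e.g. $v^+\nu^+\cdot\sigma^+$) to each appear with coefficient $\tfrac12$ twice, hence combining to give exactly $v^+\sigma^+\cdot\nu^+ + v^-\sigma^-\cdot\nu^-$, which is the desired left-hand side. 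The remaining "cross" terms, which pair opposite superscripts, should cancel in pairs against each other.

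The one step requiring genuine care—and the main obstacle—is tracking the signs coming from the opposite outward normals $\nu^+=-\nu^-$ on the shared edge $e$. This sign relation is precisely what makes the cross terms cancel: a term like $\tfrac12\,v^+\nu^+\cdot\sigma^-$ from the first product must annihilate the term $\tfrac12\,v^+\sigma^-\cdot\nu^-$ arising from the second product, and this works only because $\nu^+\cdot\sigma^- = -\nu^-\cdot\sigma^-$. I would verify explicitly that every cross term is matched by exactly one oppositely-signed partner so that all four cancel, leaving only the diagonal terms. Finally, the smoothness hypothesis on $\sigma$ and $v$ is used only to guarantee that the one-sided traces $v^\pm$ and $\sigma^\pm\cdot\nu^\pm$ appearing in these edge integrals are well defined in $L^2(e)$; no further regularity is needed, so no approximation or density argument enters.
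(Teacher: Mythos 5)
Your proof is correct. The paper does not actually prove this lemma --- it is quoted verbatim from Lemma~6.1 of the cited reference --- so there is no internal argument to compare against, but your edgewise decomposition and the algebraic verification are exactly the standard proof of this identity. The key cancellation you flag, namely that the cross terms vanish because $\tfrac12 v^+\sigma^-\cdot(\nu^++\nu^-)=0$ (and likewise with the superscripts exchanged) while each diagonal term $v^\pm\sigma^\pm\cdot\nu^\pm$ appears twice with coefficient $\tfrac12$, checks out, and your remark that the smoothness hypothesis is only needed to make the one-sided traces well defined in $L^2(e)$ is the right level of care. Nothing is missing.
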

Using this lemma to rewrite appropriate terms in (\ref{eq:Adef}) and using the identity that
\[
0=\int_{K}(-\Delta u -k^2 u)\overline{v}\,dA=\int_K(\nabla u\cdot\nabla\overline{v}-k^2u\overline{v})\,dA+
\int_{\partial K}\frac{\partial u}{\partial \nu}\overline{v}\,ds,
\]
we obtain, for any pair of piecewise solutions of the Helmholtz equation $(u,v)$ respectively in $H^{3/2+s}(K)$, $s>0$:
\begin{eqnarray}
A_h(u,v)&=& \int_{\Omega}\left(\nabla_hu\cdot\nabla_h\overline{v}
-\kappa^2u\,\overline{v}\right)\,dA-\int_{\cEI}\left(\avg{\nabla_h u}\cdot\jmp{\overline{v}}\,ds
+\jmp{ u}\cdot\avg{\nabla_h\overline{v}}\right)\,ds\nonumber\\&&
-\frac{1}{i\kappa}\int_{\cEI}\beta\jmp{\nabla_h u}\jmp{\nabla_h\overline{v}}\,ds
+{i\kappa}\int_{\cEI}\alpha\jmp{ u}\cdot\jmp{\overline{v}}\,ds+\int_{\cEA}\delta u\nabla_h \overline{v}\cdot\nu\,ds\nonumber\\&&
-\int_{\cEA}\delta\nabla_h u\cdot\nu\overline{v}\,ds
-\frac{1}{i\kappa}\int_{\cEA}\delta(\nabla_h  u\cdot\nu)(\nabla_h \overline{v}\cdot\nu)\,ds+i\kappa\int_{\cED}\alpha u\overline{v}\,ds\nonumber\\&&
-i\kappa\int_{\cEA}(1-\delta)u\overline{v}\,ds -\int_{\cED}
\left((\nabla_h u\cdot\nu)\overline{v}+u(\nabla_h\overline{v}\cdot\nu)\right)\,ds.\label{Ahalt}
\end{eqnarray}
For later a posteriori analysis we shall also need to integrate the first term by parts again, and use the ``magic lemma''
to obtain (recalling also that $u$ satisfies the Helmholtz equation on each element):
\begin{eqnarray}
A_h(u,v)&=& \int_{\cEI}\left(\jmp{\nabla_h u}\avg{\overline{v}}
-\jmp{ u}\cdot\avg{\nabla_h\overline{v}}\right)\,ds-\frac{1}{i\kappa}\int_{\cEI}\beta\jmp{\nabla_h u}\jmp{\nabla_h\overline{v}}\,ds\nonumber\\&&
+{i\kappa}\int_{\cEI}\alpha\jmp{ u}\cdot\jmp{\overline{v}}\,ds-\int_{\cEA}\frac{\delta}{i\kappa} (i\kappa u-\nabla_h u\cdot\nu)\nabla_h \overline{v}\cdot\nu\,ds\nonumber\\&&
+\int_{\cEA}(1-\delta)(\nabla_h u\cdot\nu-i\kappa u)\overline{v}\,ds
+\int_{\cED}u(i\kappa\alpha \overline{v}-\nabla_h\overline{v}\cdot\nu)\,ds.\label{Ahap}\end{eqnarray}

\section{A posteriori estimates I}
\label{estimates.sec}
%\input estimates.tex
% !TEX root =  adaptadg.tex
\label{sec:estimate}
 In this section we shall prove an a posteriori error estimate using residuals in the global $L^2$ norm.  This is the theoretical basis for the ESTIMATE step in the adaptive cycle of our code.

We shall need the solution of the following adjoint problem of finding $z\in H^1(\Omega)$ such that
\begin{eqnarray}
-\Delta z-\kappa^2z&=&(u-u_{h})\mbox{ in }\Omega,\label{zadj1}\\
\frac{\partial z}{\partial \nu}+i\kappa z&=&0\mbox{ on }\Gamma_A,\label{zadj2}\\
z&=&0\mbox{ on }\Gamma_D.\label{zadj3}
\end{eqnarray}
Theorem 3.2 of \cite{HMP13}  shows that a unique solution exists for the above problem and $z\in H^{3/2+s}(\Omega)$ for some $1/2\geq s>0$ (determined by the reentrant angles of the boundary). In addition 
\begin{eqnarray}
\sqrt{\Vert \nabla z\Vert^2_{L^2(\Omega)}+\kappa^2\Vert z\Vert^2_{L^2(\Omega)}}&\leq& Cd_\Omega\Vert u-u_{h}\Vert_{L^2(\Omega)},\\
\vert \nabla z\vert_{H^{1/2+s}(\Omega)}&\leq &C(1+d_\Omega \kappa)d^{1/2-s}_\Omega\Vert u-u_{h}\Vert_{L^2(\Omega)}^2,\label{eq:sdef}
\end{eqnarray}
where $d_\Omega$ is the diameter of $\Omega$.
%In this section we shall prove the following results: 1) an a posteriori error estimate using residuals in the global $L^2$ norm, 2) an a priori estimate for the solution in the broken $H^1$ norm, and 3) an a posteriori estimate for the solution in the broken $H^1$ norm.  These are the theoretical basis for the ESTIMATE step in the adaptive cycle.

\begin{theorem}\label{MPH_L2}
Let $u_{h}\in V_{h}$  then
\begin{eqnarray}\label{eq: l2 estimate}
\Vert u-u_{h}\Vert_{L^2(\Omega)}
&\leq& C\tau^{1/2}d_\Omega\left[(\kappa h)^{-1/2}+(d_\Omega \kappa )^{1/2}(d_\Omega^{-1}h )^s\right]
\eta_{DG}(u_{h})
\end{eqnarray}
where $s$ is the regularity exponent in (\ref{eq:sdef}) and the residual error indicator is given by
\begin{eqnarray}
\eta_{DG}(u_{h})^2&=&
\kappa ^{-1}\Vert \beta^{1/2}\jmp{\nabla_h u_{h}}\Vert_{L^2({\cal E}_I)}^2
+\kappa \Vert \alpha^{1/2}\jmp{u_{h}}\Vert_{L^2({\cal E}_I)}^2\\
&&+\kappa ^{-1}\Vert \delta^{1/2}(g-\nabla u_{h}\cdot\nu+i\kappa u_{h})\Vert_{L^2(\Gamma_A)}^2+
\kappa \Vert\alpha^{1/2} u_{h}\Vert_{L^2(\Gamma_D)}^2.\nonumber
\end{eqnarray}

\end{theorem}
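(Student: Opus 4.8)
The plan is to use a duality (Aubin--Nitsche type) argument built on the adjoint problem (\ref{zadj1})--(\ref{zadj3}). Since the right-hand side of the adjoint equation is exactly $u-u_h$, testing against the error should recover the $L^2$ norm. First I would form
\[
\Vert u-u_h\Vert_{L^2(\Omega)}^2=\int_\Omega(u-u_h)\,\overline{(u-u_h)}\,dA
=\int_\Omega(u-u_h)\,\overline{(-\Delta z-\kappa^2 z)}\,dA,
\]
and integrate by parts element-by-element, inserting the jump/average machinery via the ``magic lemma'' (Lemma \ref{magic}) together with the boundary conditions on $z$. The goal of this manipulation is to rewrite the right-hand side in terms of the sesquilinear form, i.e.\ to show that for the exact solution $u$ and for $z$ we have $A_h(u-u_h,z)$ appearing, and then to exploit consistency.

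The key step is \emph{Galerkin orthogonality combined with consistency}. Because $u$ solves (\ref{eq:helmholtz})--(\ref{eq:dirichlet}) exactly, it satisfies $A_h(u,v)=\ell(v)$ for all $v\in V_h$, and $u_h$ satisfies the same identity; hence $A_h(u-u_h,v)=0$ for all $v\in V_h$. The difficulty is that $z\notin V_h$, so I cannot simply subtract a discrete test function and kill the whole expression. Instead I would insert the best plane-wave (or, for this first estimate, a direct) approximation and use the alternative representation (\ref{Ahap}) of $A_h$. Evaluating $A_h(u-u_h,z)$ through (\ref{Ahap}) and recalling that $u$ satisfies the Helmholtz equation elementwise and the exact boundary conditions, the volume and consistency terms involving $u$ drop, leaving only the residual contributions of $u_h$: the interior jumps $\jmp{\nabla_h u_h}$ and $\jmp{u_h}$, the impedance residual $g_A-\nabla_h u_h\cdot\nu+i\kappa u_h$ on $\mathcal{E}_A$, and the Dirichlet residual $u_h$ on $\mathcal{E}_D$. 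These are precisely the four terms assembled into $\eta_{DG}(u_h)^2$.

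The main obstacle will be the sharp bounding of the adjoint traces. After Cauchy--Schwarz on each edge integral, each residual term is paired with a corresponding trace of $z$ on that edge: $\avg{z}$, $\avg{\nabla_h z}$, $\nabla_h z\cdot\nu$, or $z$ itself. I would control these using the trace inequality (\ref{trace}) for the $L^2$ traces of $z$ and its gradient, and the sharper estimate (\ref{trace2}) for $\Vert\nabla z\Vert_{L^2(e)}$ where the extra regularity $z\in H^{3/2+s}$ is needed. Summing over all edges and invoking local quasi-uniformity (to replace $h_e$ by $h_K$ up to the factor $\tau$) and quasi-uniformity near $\Gamma_A$ (to pass to the global $h$), I would collect the mesh-weighted norms into $\eta_{DG}(u_h)$ times the quantities $\Vert\nabla z\Vert_{L^2(\Omega)}$, $\kappa\Vert z\Vert_{L^2(\Omega)}$, and the seminorm $\vert\nabla z\vert_{H^{1/2+s}(\Omega)}$. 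Finally I would substitute the adjoint stability bounds preceding (\ref{eq:sdef}), dividing through by $\Vert u-u_h\Vert_{L^2(\Omega)}$; bookkeeping the powers of $\kappa$, $h$, and $d_\Omega$ produced by the two trace inequalities (which is where the exponent $s$ and the factor $(\kappa h)^{-1/2}+(d_\Omega\kappa)^{1/2}(d_\Omega^{-1}h)^s$ originate) is the delicate, calculation-heavy part of the argument.
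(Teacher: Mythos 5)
Your plan is essentially the paper's proof: a duality argument with the adjoint problem (\ref{zadj1})--(\ref{zadj3}), elementwise integration by parts so that the volume terms vanish because $u-u_{h}$ is a piecewise Helmholtz solution, conversion of the residuals of $u-u_{h}$ into residuals of $u_{h}$ alone via the continuity and boundary conditions of the exact solution, weighted Cauchy--Schwarz, and finally trace and adjoint-stability estimates for $z$ (a step the paper delegates to Lemma 4.4 of \cite{HMP13}). The one superfluous ingredient is Galerkin orthogonality, which you invoke but then never actually use, and which the paper deliberately avoids so that the estimate holds for an arbitrary $u_{h}\in V_{h}$ (e.g.\ a least-squares solution), not only the PWDG solution.
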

{{\em Remark:} The function $u_{h}$ above does not have to be the PWDG solution.  It could also be a least squares solution. But in our examples it will be computed by PWDG.

\begin{proof}
Let $w=u-u_{h}$ .  Then $w$ is a piecewise solution of  $\Delta w + \kappa^2w =0$ on each $K\in {\cal{T}}_h$ such that
$w\in H^{3/2+s}(K)$ for each element in the mesh.   Multiplying (\ref{zadj1}) of the adjoint equation  by $w$ and integrating by parts on each element $K$, we get
\begin{eqnarray*}
(w,w)_{0,\Omega} &=& \sum\limits_{K\in {\cal{T}}_h}\int\limits_{K} w(-\Delta\overline{z}-\kappa^2\overline{z})\;dA \\
&=& \sum\limits_{K\in {\cal{T}}_h}\int\limits_{K} (\nabla w\cdot\nabla\overline{z}-\kappa^2 w\overline{z})\;dA-\sum\limits_{K\in {\cal{T}}_h}\int\limits_{\partial K} w \frac{\partial \overline{z}}{\partial \nu}\;ds \\
&=& \sum\limits_{K\in {\cal{T}}_h}\int\limits_{K}-(\Delta w+\kappa^2 w)\overline{z}\;dA+\sum\limits_{K\in {\cal{T}}_h}\int\limits_{\partial K} \left(\frac{\partial w}{\partial \nu}\overline{z}- w\frac{\partial \overline{z}}{\partial \nu}\right)\;ds.
\end{eqnarray*}
Here $(\cdot,\cdot)_{0,\Omega}$ is the $L^2(\Omega)$ inner product.
Since $\Delta w+\kappa^2w=0$, the volume term drops, using in addition the boundary conditions (\ref{zadj2})-(\ref{zadj3}) we have 
\begin{eqnarray*}
(w,w)_{0,\Omega} &=& \sum\limits_{K\in {\cal{T}}_h}\int\limits_{\partial K} \left(\frac{\partial w}{\partial \nu}\overline{z}- w\frac{\partial \overline{z}}{\partial \nu}\right)\;ds \\
&=& \int\limits_{\cal{E}_I} \left(\jmp{\nabla_h w}\overline{z}-\jmp{w}\cdot\nabla_h\overline{z}\right)\;ds+\int\limits_{\cal{E}_A}
\left( \nabla_h w\cdot \nu \overline{z}-\frac{\partial \overline{z}}{\partial \nu}w\right)\;ds-\int\limits_{\cal{E}_D}w\nabla_h\overline{z}\cdot\nu\;ds \\
&=& \int\limits_{\cal{E}_I} \left(\jmp{\nabla_h w}\overline{z}-\jmp{w}\cdot\nabla_h\overline{z}\right)\;ds+\int\limits_{\cal{E}_A}
\left( \nabla_h w\cdot \nu -i\kappa w\right)\overline{z}\;ds-\int\limits_{\cal{E}_D}w\nabla_h\overline{z}\cdot\nu\;ds.
\end{eqnarray*}
Multiplying and dividing by factors of $\kappa,$ $\alpha$, $\beta$ and $\delta$, we obtain by the Cauchy-Schwarz inequality,
\begin{eqnarray*}
|(w,w)|_{0,\Omega} &\leq & \sum\limits_{e\in {\cal{E}_I}} (\kappa^{-1/2}\Vert \beta^{1/2}\jmp{\nabla_h w}\Vert_{L^2(e)}\; \kappa^{1/2}\Vert \beta^{-1/2}z\Vert_{L^2(e)} \\& & +\; \kappa^{1/2}\Vert \alpha^{1/2}\jmp{w}\Vert_{L^2(e)}\; \kappa^{-1/2}\Vert \alpha^{-1/2}\nabla_h z\cdot\nu\Vert_{L^2(e)})  \\ & & 
+ \sum\limits_{e\in {\cal{E}_A}} \kappa^{-1/2}\Vert \delta^{1/2}(\nabla_h w\cdot\nu -i\kappa w)\Vert_{L^2(e)}\; \kappa^{1/2}\Vert \delta^{-1/2}z\Vert_{L^2(e)} \\ & & 
+ \sum\limits_{e\in {\cal{E}_D}} \kappa^{1/2}\Vert \alpha^{1/2} w\Vert_{L^2(e)}\;\kappa^{-1/2}\Vert \alpha^{-1/2}\nabla_h z\cdot\nu\Vert_{L^2(e)}.
\end{eqnarray*}
Hence, expanding $w$ and noting that $\jmp{u}=0$ and $\jmp{\nabla_h u}=0$ on interior edges, and taking into account the boundary conditions for $u$, we have
\begin{eqnarray}
\Vert u-u_{h},\phi\Vert^2_{0,\Omega}&\leq & \eta_{DG}(u_{h}) {\cal{G}}(z)^{1/2}\label{esta}
\end{eqnarray}
where 
\begin{eqnarray*}
{\cal{G}}(z)&:=&\sum\limits_{e\in {\cal{E}_I}} \left(\kappa \Vert \beta^{-1/2} z \Vert_{L^2(e)}^2 + \kappa^{-1}\Vert \alpha^{-1/2}\nabla_h z\cdot\nu\Vert^2_{L^2(e)}\right)\\
& & +\sum\limits_{e\in {\cal{E}_A}}\kappa \Vert \delta^{-1/2}z\Vert^2_{L^2(e)} + \sum\limits_{e\in {\cal{E}_D}} \kappa^{-1}\Vert \alpha^{-1/2}\nabla_h z\cdot \nu\Vert^2_{L^2(e)}.
\end{eqnarray*}
Using the definitions of the penalty terms $\alpha,\beta,\delta$, the factor ${\cal{G}}(z)$ is bounded by 
$${\cal{G}}(z)\leq C\tau d_{\Omega}^2(\kappa^{-1}h^{-1}+d_{\Omega}^{1-2s}\kappa h^{2s})\Vert u-u_{h} \Vert^2_{0,\Omega}.$$
For details  see \cite[Lemma 4.4]{HMP13}.
Using this result in (\ref{esta}) proves the theorem.\hfill\end{proof} 

We now test the error indicators derived above to drive $h$-adaptivity (we keep the number of
directions per element fixed and equal on all elements).
Our first test uses a smooth solution on an L-shaped domain.  In this case uniform refinement is likely to be optimal, and we expect the adaptive method to result in an approximately uniform mesh.
 All computations are done in MATLAB and we shall discuss the algorithm later in Section \ref{resultsII.sec}.

We consider an $L$-shaped domain $\Omega=(-1,1)^2\backslash([0,1]\times[-1,0]).$ We choose Dirichlet boundary conditions such that the exact solution of 
$(\ref{eq:helmholtz})$ is given by 
\begin{equation}
u(\textbf{x}) = J_{\xi}(\kappa r)\sin(\xi\theta)\label{reg_exact}
\end{equation}
where $\textbf{x}=r(\cos\theta,\sin\theta)$,
for $\xi = 2$ (later we will also choose $\xi = 2/3$ corresponding to a singular solution) and $\kappa=12$.  Here $J_{\xi}$ denotes the Bessel function of the first kind and order $\xi$.  The solution is shown in Fig.~\ref{L_smooth_sol}.  Note that although we have not implemented the impedance boundary condition, the theory in this section can also be proved with 
just the Dirichlet boundary condition provided $\kappa^2$ is not an interior Dirichlet eigenvalue for the domain.  In the Dirichlet case the dependence of the overall coefficient on $\kappa$ cannot be estimated.  But the overall constant is not used in the marking strategy.

\begin{figure}
\begin{center}
\includegraphics[scale=0.5]{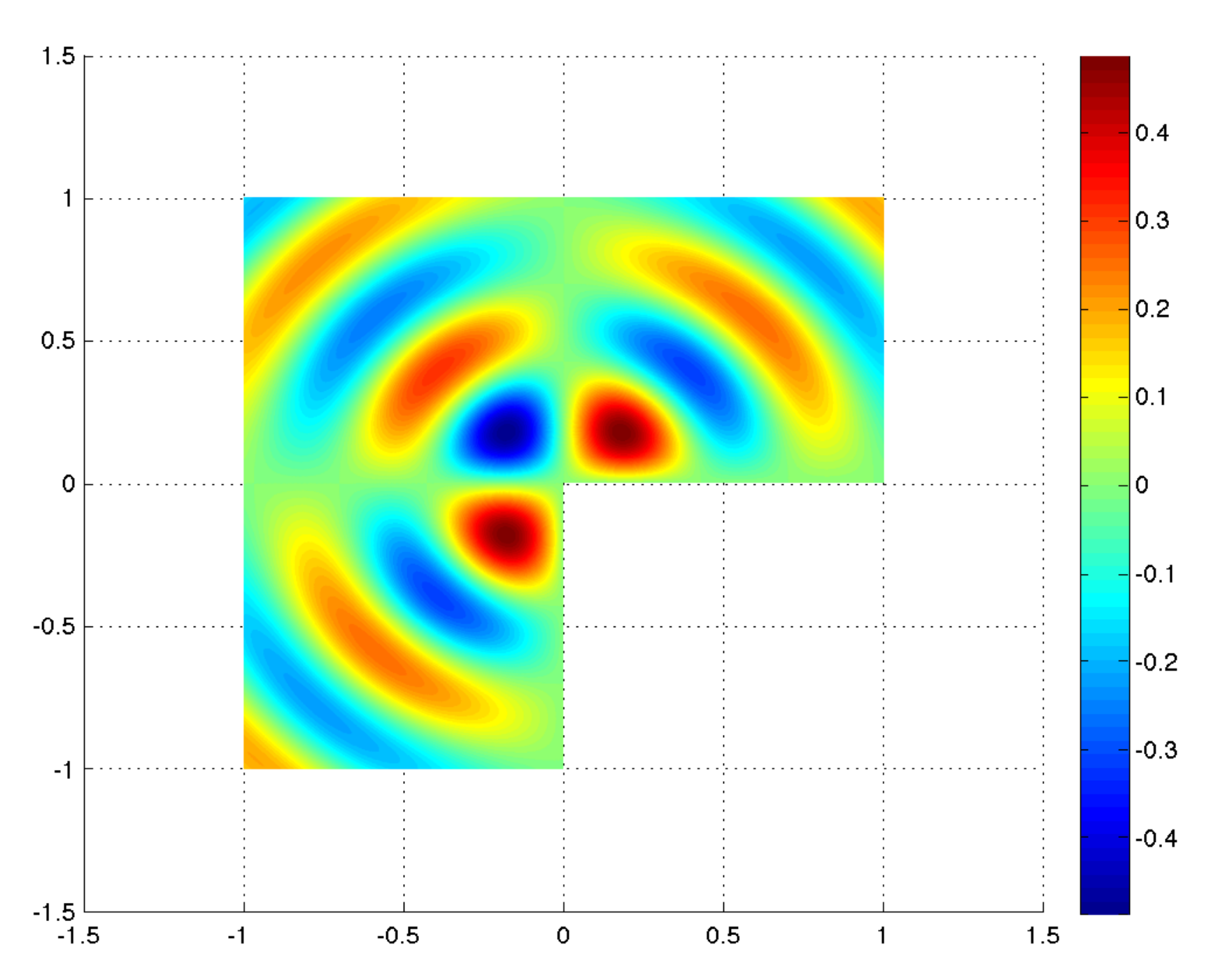}
\end{center}
\caption{The computed solution after 12 iterations when $\xi=2$ and $k=12$ using $p_K=7$ plane waves per element.
 This is indistinguishable graphically from the exact solution.}
\label{L_smooth_sol}
\end{figure}

 The initial mesh and the refined mesh after 12 adaptive steps are shown in Fig.~\ref{init_mesh_L}.  We see that the adaptive scheme has correctly chosen to refine almost uniformly in the domain since there is no singularity at the reentrant corner.

\begin{figure}
\centering
\subfigure[Initial]{%
\includegraphics[scale=0.4]{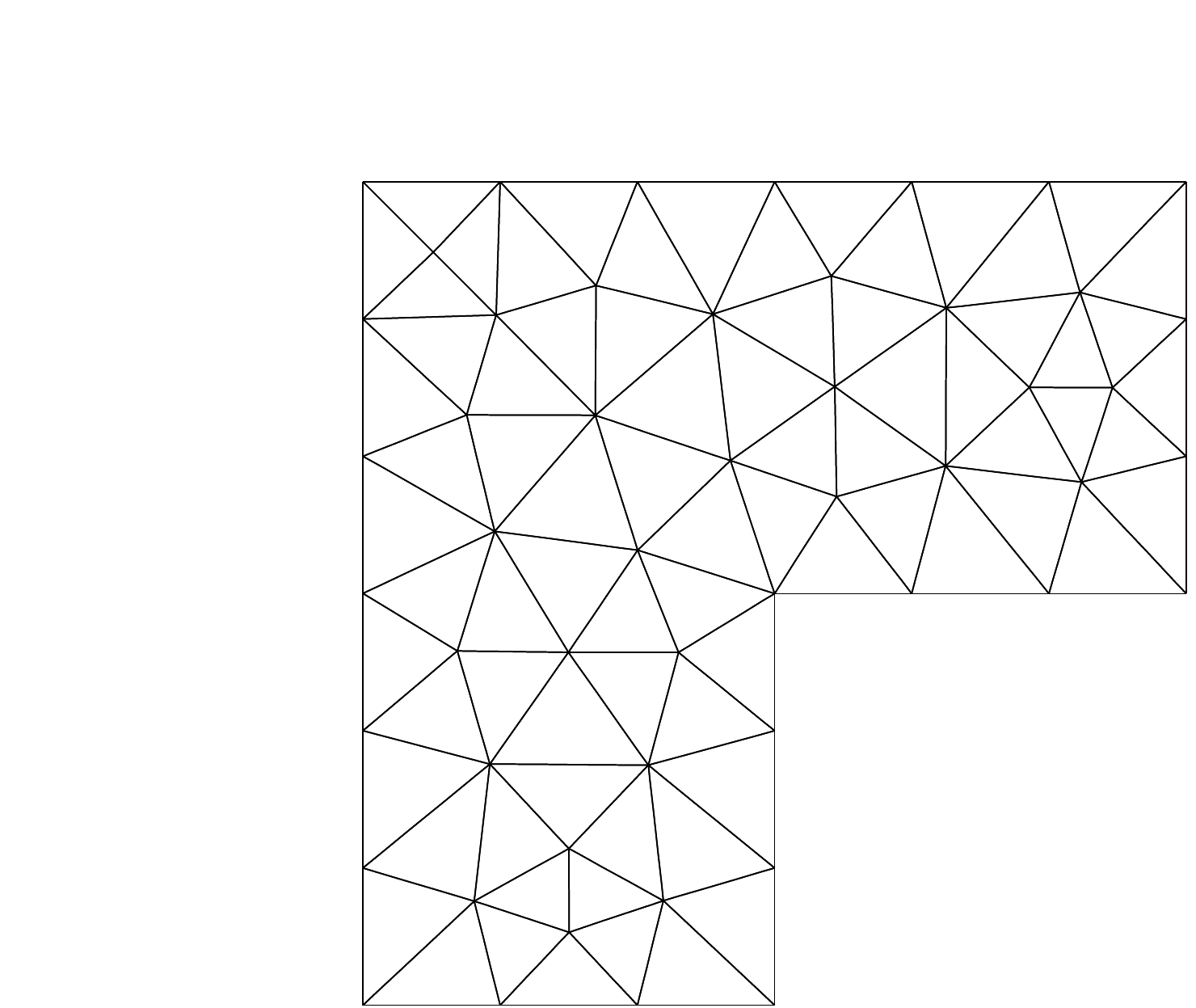}}
\quad
\subfigure[After 12 iterations]{%
\includegraphics[scale=0.4]{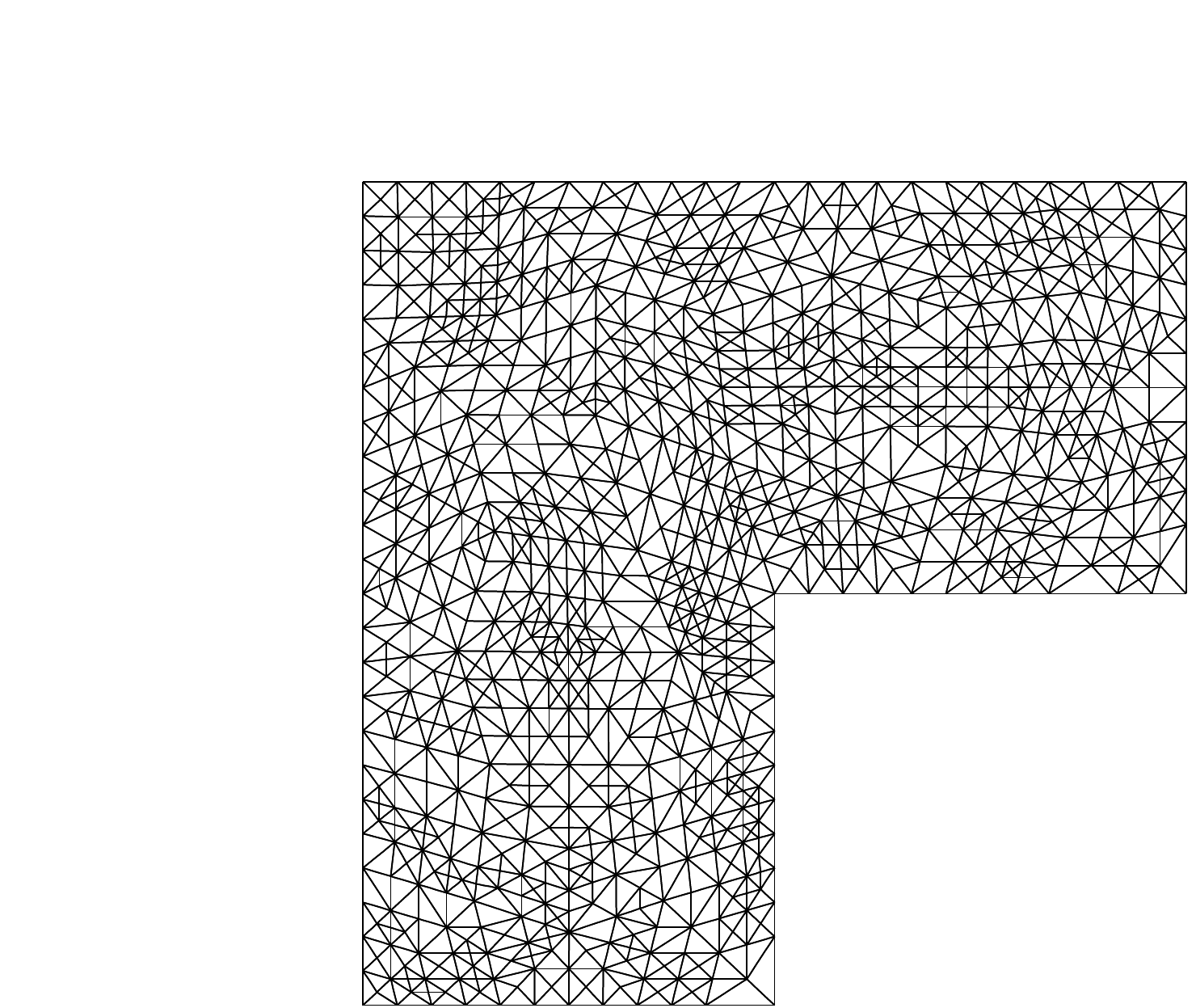}}
\caption{The left panel shows the initial mesh and the right panel
shows the adaptively computed mesh after 12 iterations when $\xi=2$ and $k=12$ using $p_K=7$ plane waves per element.}
\label{init_mesh_L}
\end{figure}

In Figure \ref{N5_s_MPH} we show detailed error results starting from the mesh in Fig. ~\ref{init_mesh_L} using the indicator
in Theorem \ref{MPH_L2} with $p_K=5$ plane waves per element.  The code uses the Doerfler marking strategy  with a bulk parameter $\theta=0.3$
(see the discussion in Section \ref{resultsII.sec}).
In these figures we show  the relative error in $L^2$ norm and the the indicator $\eta_{DG}$.  We scale the indicator so that the indicator and actual relative error are equal at the first step.  For reliability we then want the estimated
error to lie above the true error, and for efficiency we want the gap between the two curves to be small. Of course until the mesh is refined
sufficiently both efficiency and reliability may no be observed. In the right panel of each figure we show the ratio of the exact relative error to the error indicator and term this the ``efficiency ratio''.  The efficiency decreases markedly as the algorithm progresses.  
\begin{figure}
\begin{center}\begin{tabular}{ccc}
\resizebox{0.4\textwidth}{!}{\includegraphics{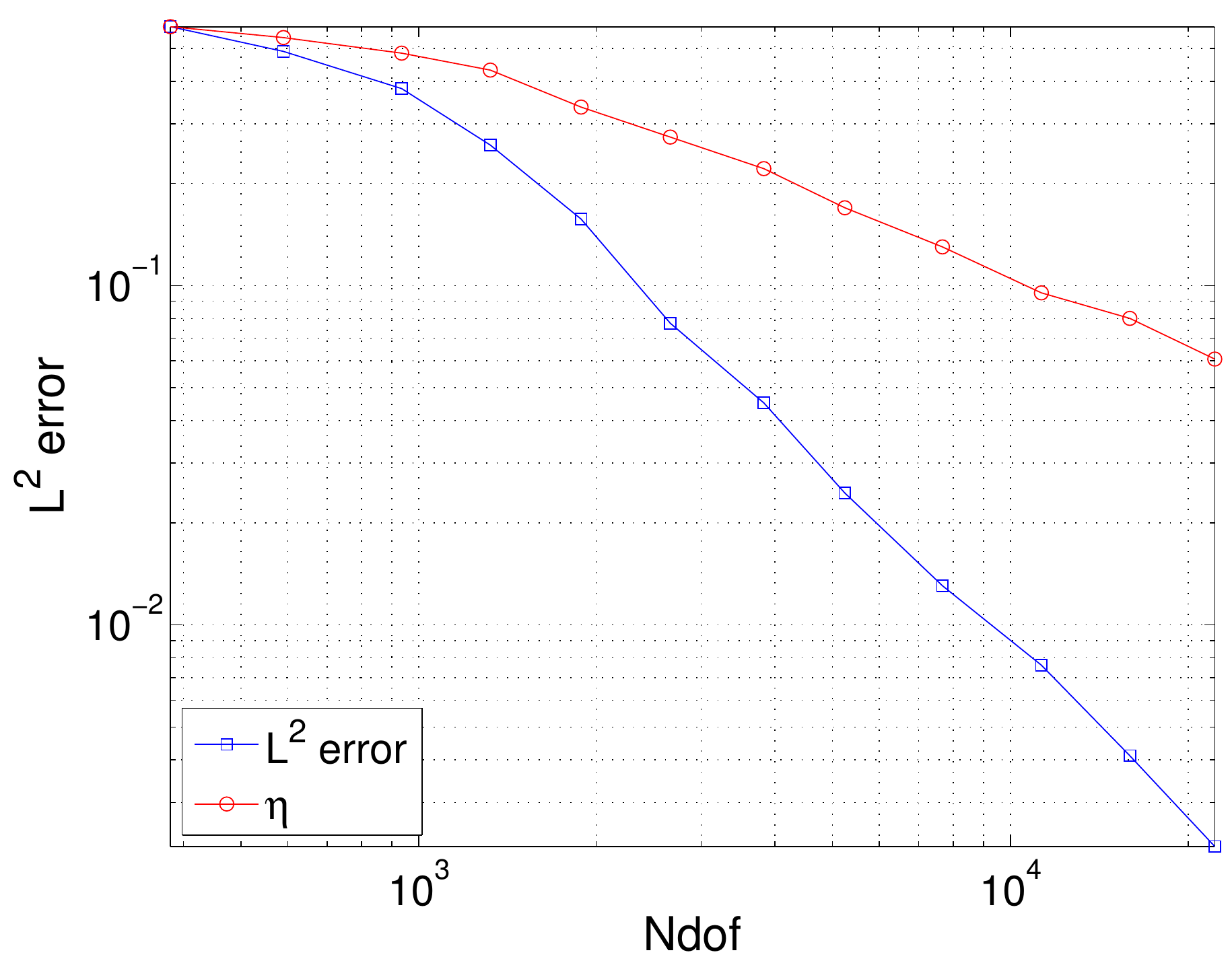}}&
\resizebox{0.4\textwidth}{!}{\includegraphics{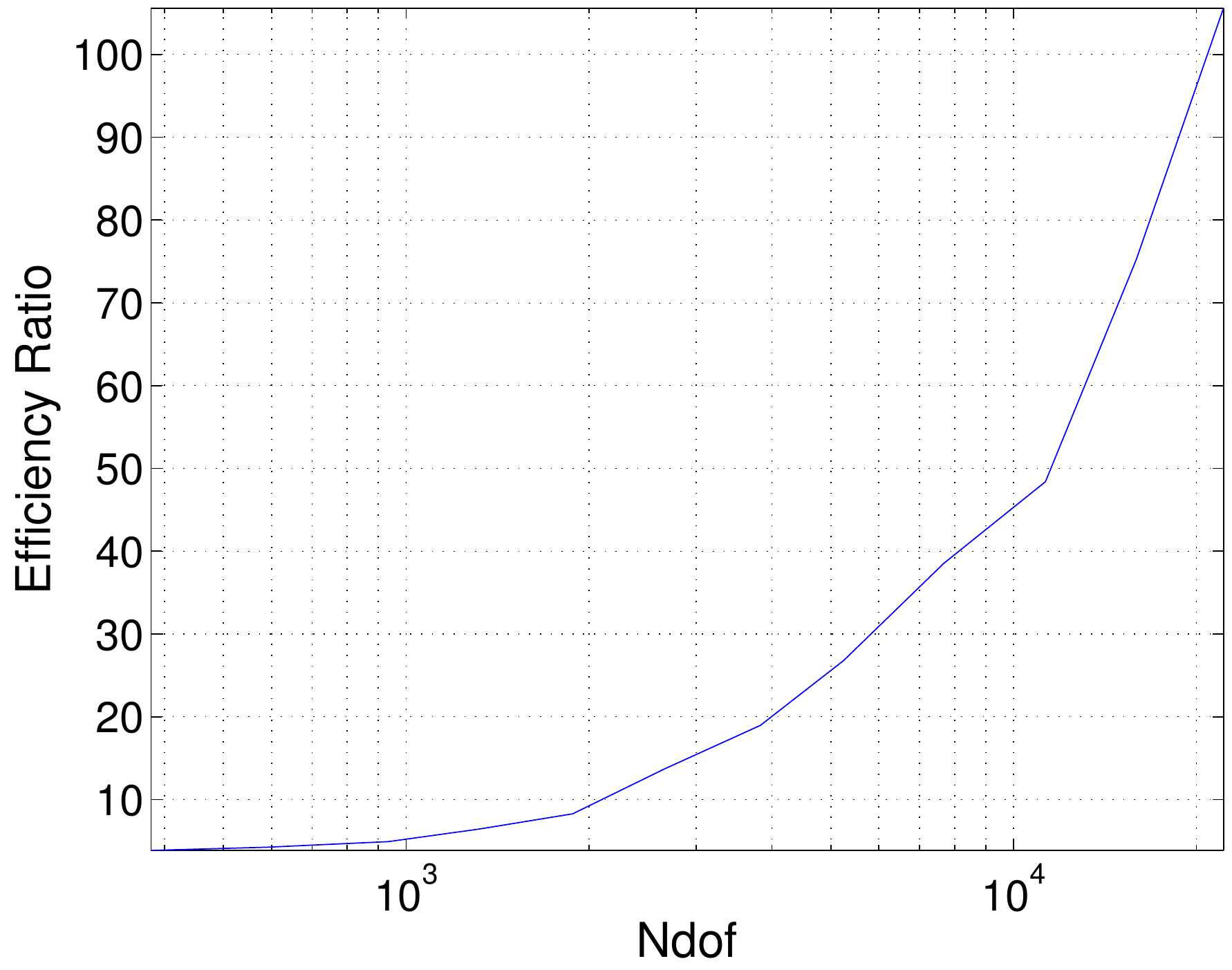}}%&
%\resizebox{0.32\textwidth}{!}{\includegraphics{./H1_smooth}}\\
\end{tabular}
\end{center}
\caption{Adaptive refinement using $p_K=5$ waves per element and the indicator from Theorem \ref{MPH_L2}. Left panel: relative $L^2$ norm and indicator. Right panel: efficiency in the $L^2$ norm.  Right panel: relative $H^1$ norm behavior.  Although the indicator is reliable, it tends to overestimate the error so is not efficient.}
\label{N5_s_MPH}
\end{figure}

Results for $p^K=7$ waves per element are shown in Fig.~\ref{N7_s_MPH}.  Again mesh refinement does improve the solution error, but the efficiency of the indicator deteriorates rapidly as the mesh is refined.
\begin{figure}
\begin{center}\begin{tabular}{ccc}
\resizebox{0.4\textwidth}{!}{\includegraphics{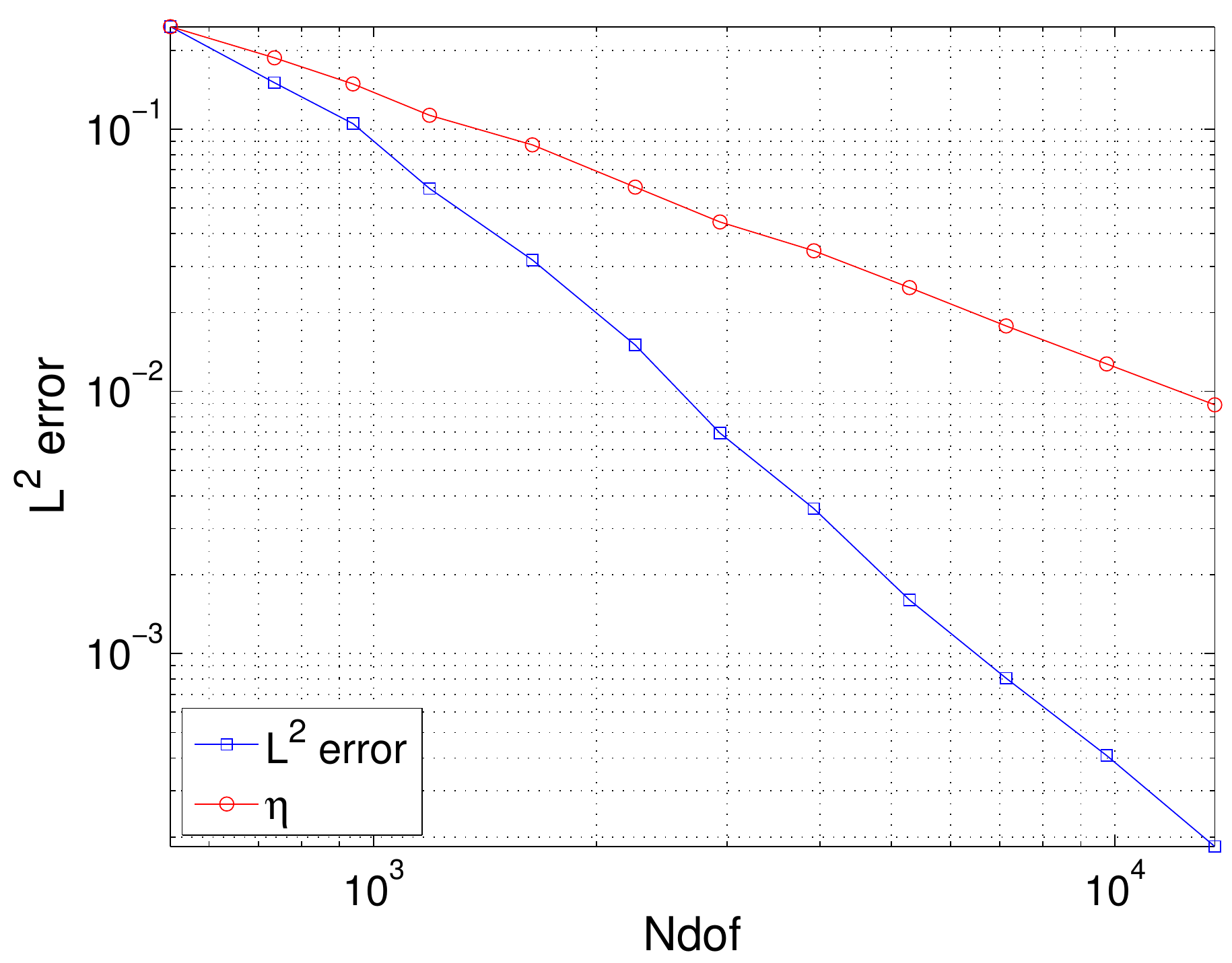}}&
\resizebox{0.4\textwidth}{!}{\includegraphics{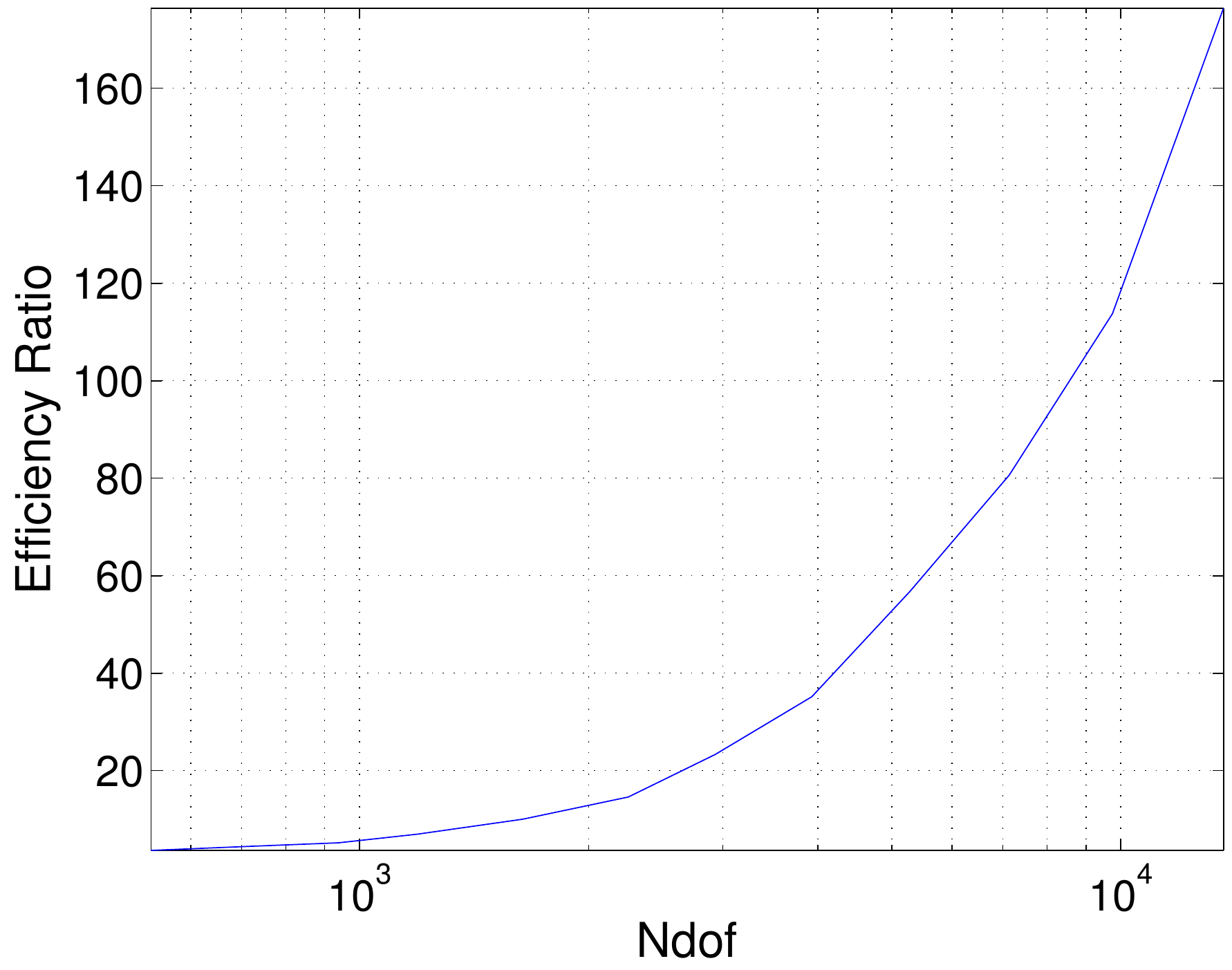}}%&
%\resizebox{0.32\textwidth}{!}{\includegraphics{./H1_smooth_7}}\\
\end{tabular}
\end{center}
\caption{Adaptive refinement using $p_K=7$ waves per element and the indicator from Theorem \ref{MPH_L2}. Left panel: relative $L^2$ norm behavior. Right panel: efficiency in the $L^2$ The behavior of the indicator is similar to that for $p_K=5$ in Fig.~\ref{N5_s_MPH}.}
\label{N7_s_MPH}
\end{figure}

\section{A posteriori estimates II}
\label{sec:refest}

The results at the end of Section \ref{estimates.sec}  show that the basic error indicator in Theorem \ref{MPH_L2}, while reliable, is not efficient.  We therefore 
need to re-examine $h$-convergence theory to determine if a different weighting for the residual can be derived. 
In Section \ref{sec:estimate} we used special weights $\alpha$ and $\beta$ designed to allow the estimation of
${\cal{}G}(z)$ in terms of inverse powers of the global mesh size.  Because
of the upcoming results in this section, we no longer need inverse powers of the global mesh size in the estimate, and we now make the  choice that the parameters $\alpha$, $\beta$ and $\delta$
are positive constants independent of the mesh size, and that $\delta<1$.  Note that the choice $\alpha=\beta=\delta=1/2$ gives the classical UWVF \cite{buf07}.

We want an a posteriori error estimate for $\Vert u-u_{h}\Vert_{L^2(\Omega)}$ and will again use the solution
$z$ of the adjoint problem (\ref{zadj1})-(\ref{zadj3}).
By the adjoint consistency of the PWDG method (or direct calculation) we see that $z$ is sufficiently smooth to satisfy 
\[
A_h(w,z)=\int_{\Omega}w(\overline{u-u_{h}})\,dA,
\]
for all sufficiently smooth piecewise solutions $w$ of the Helmholtz equation ($w\in H^{3/2+s}(K)$ for some $s>0$ and each element suffices). 

Since $z\in H^{3/2+s}(\Omega)$, $s>0$, we can interpolate $z$ by a standard piecewise linear finite element function
denoted $z_h^c$.  We shall need to approximate $z_h^c$ by a function $z_{h,pw}$.  That this is possible follows from the
proof of Lemma 3.10 in \cite{git07} and is given in Lemma 6.3 in \cite{HMPS12}.  We give a slightly modified version:

\begin{lemma}\label{oldassumption1}
Suppose that on an element $K$  we are using $P_K\geq 4$ plane waves denoted $\{\psi_j^K\}_{j=1}^{P_K}$.  Then there are constants $\{\alpha^K_{i,j}\}$ (depending on $\kappa$) for $0\leq i\leq 2$ and $1\leq j\leq P_K$ such that
if $\mu_{pw}^i=\sum_{j=1}^{P_K}\alpha^K_{i,j}\psi_j^K$ and for all $x=(x_1,x_2)\in K$
\begin{eqnarray*}
|1-\mu^0_{pw}|=O(\kappa^2|x|^2), &\quad& |\nabla \mu_{pw}^0|=O(\kappa^2|x|)\\
|x_j-\mu^j_{pw}|=O(\kappa^2|x|^3), && |\nabla (x_j- \mu_{pw}^j)|=O(\kappa^2|x|^2),\;j=1,2,\\
|\nabla\nabla\mu^0_{pw}|=O(\kappa^2), && |\nabla\nabla \mu_{pw}^j|=O(\kappa^2|x|),\;j=1,2.
\end{eqnarray*}
\end{lemma}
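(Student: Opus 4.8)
The plan is to construct the coefficients $\{\alpha^K_{i,j}\}$ explicitly and to verify every estimate by Taylor expanding each plane wave about the origin (the point relative to which $|x|$ is measured in the statement) and exploiting the discrete trigonometric moment identities that hold because the directions are equally spaced. Writing $\psi_j^K(x)=\exp(i\kappa\,\mathbf{d}_j^K\cdot x)$ and expanding gives
\[
\psi_j^K(x)=\sum_{n=0}^{\infty}\frac{(i\kappa)^n}{n!}(\mathbf{d}_j^K\cdot x)^n,
\]
so any combination $\sum_j c_j\psi_j^K$ contributes at order $n$ the homogeneous polynomial $\frac{(i\kappa)^n}{n!}\sum_j c_j(\mathbf{d}_j^K\cdot x)^n$. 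With $\theta_j^K=2\pi j/P_K$ the essential tool is that $\sum_{j=1}^{P_K}e^{im\theta_j^K}$ equals $P_K$ when $P_K\mid m$ and vanishes otherwise; expanding $(\mathbf{d}_j^K\cdot x)^n$ in powers of $e^{\pm i\theta_j^K}$ then shows that, once $P_K\ge 4$, the first-, second- and third-order trigonometric moments all vanish apart from the $m=0$ contribution. This is exactly where the hypothesis $P_K\ge4$ is used.

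For $\mu^0_{pw}$ I would take $\alpha^K_{0,j}=1/P_K$. The degree-zero term is $1$, the degree-one term is proportional to $\sum_j\mathbf{d}_j^K=0$ (already for $P_K\ge2$), and the first surviving correction is the degree-two term, which by the identities $\sum_j\cos^2\theta_j^K=\sum_j\sin^2\theta_j^K=P_K/2$ and $\sum_j\cos\theta_j^K\sin\theta_j^K=0$ equals $-\tfrac{\kappa^2}{4}|x|^2$, giving $|1-\mu^0_{pw}|=O(\kappa^2|x|^2)$. For the linear functions I would set $\alpha^K_{1,j}=\tfrac{2}{i\kappa P_K}\cos\theta_j^K$ and $\alpha^K_{2,j}=\tfrac{2}{i\kappa P_K}\sin\theta_j^K$; the same moment sums make the degree-zero and degree-one parts reproduce $x_1$ (respectively $x_2$) exactly. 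The crucial point is that the degree-two contribution involves only the moments $\sum_j\cos\theta_j^K e^{im\theta_j^K}$ (respectively with $\sin$) having $m=\pm1,\pm3$, all of which vanish for $P_K\ge4$, so the first nonzero error is the degree-three term, manifestly $O(\kappa^2|x|^3)$, as claimed.

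The gradient and Hessian bounds follow from the same bookkeeping applied to the termwise differentiated series: each derivative of $\psi_j^K$ brings down a factor $i\kappa\mathbf{d}_j^K$, lowering the polynomial degree by one while keeping the $\kappa$ count, so the identical moment cancellations one order lower yield $|\nabla\mu^0_{pw}|=O(\kappa^2|x|)$, $|\nabla\nabla\mu^0_{pw}|=O(\kappa^2)$, and the analogues for $\mu^j_{pw}$. The step I expect to be the main obstacle is controlling the Taylor remainder uniformly: since $|\psi_j^K|=1$ and the coefficients are $O(1)$ or $O(1/\kappa)$, the tail beyond the leading error is bounded by a convergent series summing like $e^{\kappa|x|}-1-\kappa|x|-\cdots$, so the clean orders $O(\kappa^2|x|^2)$ and $O(\kappa^2|x|^3)$ emerge only after restricting to the regime where $\kappa|x|$ (equivalently $\kappa h_K$) is bounded — precisely the refined-mesh setting of interest. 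Making the implied constants' dependence on that bound and on $P_K$ explicit, and propagating the remainder estimate through the two differentiations, is the part needing the most care; the algebraic cancellations themselves are routine once the moment identities are in hand.
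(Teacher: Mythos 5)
Your construction is correct, and all the claimed orders check out: with equally spaced directions the moment identity $\sum_{j=1}^{P_K}e^{im\theta_j^K}=0$ unless $P_K\mid m$ kills the $m=\pm1,\pm2$ contributions for $P_K\ge3$ and the $m=\pm3$ contributions for $P_K\ge4$ (and for $P_K=4$ one verifies $\sum_j e^{3i\theta_j^K}=-i-1+i+1=0$ directly), so the weights $\alpha^K_{0,j}=1/P_K$, $\alpha^K_{1,j}=\tfrac{2}{i\kappa P_K}\cos\theta_j^K$, $\alpha^K_{2,j}=\tfrac{2}{i\kappa P_K}\sin\theta_j^K$ reproduce $1,x_1,x_2$ to the stated accuracy, and termwise differentiation gives the gradient and Hessian bounds. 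The comparison with the paper is this: the paper does not actually prove the lemma. It defers to the proof of Lemma 3.10 of Gittelson--Hiptmair--Perugia and Lemma 6.3 of the $hp$ paper, where a general argument is given only for $p_K\ge5$, and then patches in $p_K=4$ by observing that for the directions $\pm e_1,\pm e_2$ the problem decouples into two one-dimensional approximation problems (the $\cos(\kappa x)$, $\sin(\kappa x)/\kappa$ computation in the remark), with $p_K=5$ checked symbolically in Maple. Your argument is therefore more uniform and self-contained: one moment computation covers every $P_K\ge4$ at once, and it also explains cleanly why $P_K=3$ fails to achieve the third-order accuracy for $\mu^1_{pw},\mu^2_{pw}$ (the $m=\pm3$ moment survives), which is exactly the degradation the paper's remark reports for $p_K=3$. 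What the paper's route buys is only economy of exposition by citation. Two small caveats you should make explicit if this were written out in full: your argument uses that the directions are equally spaced, which is the paper's standing convention from Section 2 but is not restated in the lemma; and, as you correctly flag, the implied constants are uniform only on the range where $\kappa|x|$ (equivalently $\kappa h_K$) stays bounded, a restriction that is also tacit in the paper's subsequent use of the lemma (the bound $|\lambda_j^K-\lambda_{pw,j}^K|\le C\kappa^2h_K^2$) and is harmless in the refined-mesh regime, but it should be stated since the constants $C$ there are claimed independent of $\kappa$ and $h_K$ separately.
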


\begin{remark}
This lemma is motivated by the following observation.  Suppose we are in one dimension and on the interval $[-h/2,h/2]$.  Let the basis functions be  $\psi_1(x)=\exp(i\kappa x)$ and $\psi_2(x)=\exp(-i\kappa x)$.  Then
\begin{eqnarray*}
\mu^0(x)&=&\frac{\psi_1(x)+\psi_2(x)}{2}=\cos(\kappa x)=1-O(\kappa^2x^2),\\
\mu^1(x)&=&\frac{\psi_1(x)-\psi_2(x)}{2i\kappa}=\frac{\sin(\kappa x)}{\kappa}=x-O(\kappa^2x^3),
\end{eqnarray*}
give a good approximation to linear functions for small $h$. Other estimates  follow accordingly.

If we select $p_K=3$ waves per element
\[
\psi_j(x,y)=\exp(i\kappa(\cos(\theta_j)x_1+\sin(\theta_j)x_2)),\quad j=1,2,3.
\]
where $\theta_j=(2\pi/3)(j-1)$, then we can compute coefficients $\alpha_{i,j}$ such that
\begin{eqnarray*}
\mu^0_{pw}&=&1+O(|x|^2\kappa^2),\\
\mu^j_{pw}&=&x_j+O(|x|^2\kappa),
\end{eqnarray*}
provided $-\sin(\theta_2)+\sin(\theta_3)-\cos(\theta_2)\,\sin(\theta3)+\sin(\theta_2)\,\cos(\theta_3)\not=0$.  But equality only occurs if $\theta_2=0$ or $\theta_2=\theta_3$, so this condition is satisfied.  These results are  not sufficient for 
the lemma, but could be used to derive an alternative indicator in this case.

If we choose $p_K=4$ we have
\[
\psi_1=\exp(i\kappa x_1),\;\psi_2(x)=\exp(i\kappa x_2),\;\psi_3(x)=\exp(-i\kappa x_1),\;\psi_4(x)=\exp(-i\kappa x_2).
\]
Then Lemma  \ref{oldassumption1} is satisfied because the approximation problem reduces to the one dimensional case. 

When $p_K=5$ with equally spaced directions a symbolic algebra package (Maple) again verifies the required asymptotics. Indeed this is the lowest order case considered in \cite{git07,HMPS12} where a general proof is given for $p_K\geq 5$.
\end{remark}

Now suppose we are on a triangle $K$ and $z_h^c=\sum_{j=1}^3z(\mathbf{a}^{K}_j)\lambda^K_j$ where $\lambda^K_j$ is the
$j$th barycentric coordinate function and $\mathbf{a}^K_j$ is the $j$th vertex of the triangle.  We can assume that the centroid is at the origin by translation.  Then, $\lambda_j^K=a_j^K+b_j^Kx_1+c_j^Kx_2$ and $a_j^K=O(1)$, $b_j^K=O(1/h_K)$ and
$c_j^K=O(1/h_K)$.  Replacing 1, $x_1$ and $x_2$ by the above plane wave approximations $\mu_{pw}^j$, $j=0,1,2$, and denoting this approximation
by $\lambda_{pw,j}^K$ we have:
\begin{lemma} For $p_K\geq 4$ we have the following estimates for all $x\in K$,
\[
|\lambda_j^K-\lambda_{pw,j}^K|+h_K|\nabla (\lambda_j^K-\lambda_{pw,j}^K)|+h^2_K|\nabla\nabla (\lambda_j^K-\lambda_{pw,j}^K)|\leq C(h_K^2\kappa^2)
\]
\end{lemma}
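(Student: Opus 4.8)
The plan is to exploit the linearity of the plane-wave replacement map. Since $\lambda_j^K = a_j^K\cdot 1 + b_j^K x_1 + c_j^K x_2$ and, by construction, $\lambda_{pw,j}^K = a_j^K\mu_{pw}^0 + b_j^K\mu_{pw}^1 + c_j^K\mu_{pw}^2$, subtracting gives the exact decomposition
\[
\lambda_j^K-\lambda_{pw,j}^K = a_j^K(1-\mu_{pw}^0) + b_j^K(x_1-\mu_{pw}^1) + c_j^K(x_2-\mu_{pw}^2).
\]
The same identity survives after applying $\nabla$ or $\nabla\nabla$ term by term, because $1,x_1,x_2$ are at most linear and hence have vanishing second derivatives, so $\nabla\lambda_j^K$ is constant and $\nabla\nabla\lambda_j^K=0$. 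Thus the whole estimate reduces to inserting the three monomial bounds from Lemma \ref{oldassumption1}, weighted by the coefficient sizes $a_j^K=O(1)$ and $b_j^K,c_j^K=O(1/h_K)$.

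The one geometric fact I would record first is that, with the centroid placed at the origin, every $x\in K$ satisfies $|x|\le Ch_K$ (the diameter of $K$ is $h_K$ and the centroid lies inside $K$). With this in hand each contribution is estimated directly. For the value, $|a_j^K(1-\mu_{pw}^0)| = O(1)\,O(\kappa^2|x|^2)=O(\kappa^2 h_K^2)$ while $|b_j^K(x_1-\mu_{pw}^1)| = O(1/h_K)\,O(\kappa^2|x|^3)=O(\kappa^2 h_K^2)$, and likewise for the $c_j^K$ term. For the gradient I would use $\nabla(1-\mu_{pw}^0)=-\nabla\mu_{pw}^0=O(\kappa^2|x|)$ and $\nabla(x_j-\mu_{pw}^j)=O(\kappa^2|x|^2)$, giving $O(1)\,O(\kappa^2 h_K)$ and $O(1/h_K)\,O(\kappa^2 h_K^2)$ respectively, so that $|\nabla(\lambda_j^K-\lambda_{pw,j}^K)|=O(\kappa^2 h_K)$ and the prefactor $h_K$ promotes this to $O(\kappa^2 h_K^2)$. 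For the Hessian I would use $\nabla\nabla\mu_{pw}^0=O(\kappa^2)$ and $\nabla\nabla\mu_{pw}^j=O(\kappa^2|x|)$, giving $O(1)\,O(\kappa^2)$ and $O(1/h_K)\,O(\kappa^2 h_K)$, so $|\nabla\nabla(\lambda_j^K-\lambda_{pw,j}^K)|=O(\kappa^2)$, which the prefactor $h_K^2$ again turns into $O(\kappa^2 h_K^2)$. Summing the three weighted contributions yields the claimed bound, valid for $p_K\ge 4$ so that Lemma \ref{oldassumption1} applies.

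There is no deep obstacle here; the entire content is in the bookkeeping. The one point that genuinely requires care — and the reason all three derivative orders collapse to the single rate $\kappa^2 h_K^2$ — is that Lemma \ref{oldassumption1} supplies a different power of $|x|$ at each derivative order (one power lost per derivative, with the linear monomials starting one power of $|x|$ higher than the constant one), and these powers must combine exactly with the $O(1)$ versus $O(1/h_K)$ coefficient scalings and with the $1,h_K,h_K^2$ prefactors so that each of the nine resulting products is $O(\kappa^2 h_K^2)$ and none is larger. I would therefore organize the argument as a three-by-three table of derivative order against monomial, verify that every entry is $O(\kappa^2 h_K^2)$, and conclude.
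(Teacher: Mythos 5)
Your proof is correct and follows exactly the paper's argument: decompose $\lambda_j^K-\lambda_{pw,j}^K$ as $a_j^K(1-\mu_{pw}^0)+b_j^K(x_1-\mu_{pw}^1)+c_j^K(x_2-\mu_{pw}^2)$, insert the bounds of Lemma \ref{oldassumption1} together with $|x|\leq Ch_K$ and the coefficient scalings $a_j^K=O(1)$, $b_j^K,c_j^K=O(1/h_K)$. You simply write out the gradient and Hessian cases explicitly where the paper says ``the proof of the other estimates proceeds similarly.''
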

\begin{remark} This lemma is essentially used in the proof of Lemma 3.10 in \cite{git07}.
\end{remark}
\begin{proof} To estimate $\lambda^K_j-\lambda^K_{pw,j}$ on $K$  we note that
\begin{eqnarray*}
|\lambda^K_j-\lambda^K_{pw,j}|&=&|a_j^K(1-\mu_{pw}^0)+b_j^K(x_1-\mu_{pw}^1)+c_j^K(x_2-\mu_{pw}^2)|\\
&\leq &C(k^2|x|^2+(1/h_K)(k^2h_K^3))\leq C \kappa^2h_K^2.
\end{eqnarray*}
The proof of the other estimates proceeds similarly.
\end{proof}

Using the plane wave approximation to the barycentric coordinate functions element by element, we can then 
construct an approximate interpolant $z_{h,pw}\in V_h$.
We  need to estimate $z_h^c-z_{h,pw}^c$ and $\nabla_h(z_h^c-z_{h,pw})$ on edges in the mesh.  This is done in the next lemma
\begin{lemma}\label{zdef} Suppose $e$ is an edge between two elements $K_1$ and $K_2$.
Under  the standing assumptions on the mesh, there is a constant $C$ independent of $e$, $z$, $K_j$, $h_{K_j}$, $j=1,2$ and $\kappa$ such that
\begin{eqnarray*}
\Vert \avg{z_h^c-z_{h,pw}}\Vert^2_{L^2(e)}&\leq& C\sum_{j=1}^2h_{K_j}^5\kappa^4\Vert z\Vert_{L^{\infty}(K_j)}^2,\\
\Vert \avg{\nabla_h(z_h^c-z_{h,pw})}\Vert^2_{L^2(e)}&\leq& C\sum_{j=1}^2h_{K_j}^3\kappa^4\Vert z\Vert_{L^{\infty}(K_j)}^2.
\end{eqnarray*}
\end{lemma}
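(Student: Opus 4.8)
The plan is to reduce everything to the per-element pointwise bounds established in the preceding lemma. On a single triangle $K$ with vertices $\mathbf{a}^K_j$ I would write
\[
(z_h^c-z_{h,pw})|_K=\sum_{j=1}^3 z(\mathbf{a}^K_j)\,(\lambda^K_j-\lambda^K_{pw,j}),
\]
so that, bounding each nodal value by $|z(\mathbf{a}^K_j)|\le\Vert z\Vert_{L^{\infty}(K)}$ (legitimate since $z\in H^{3/2+s}(\Omega)$ embeds into $C^0(\overline{\Omega})$ in two dimensions) and invoking the previous lemma, I obtain the uniform pointwise estimates on $\overline{K}$
\[
|z_h^c-z_{h,pw}|\le C\,h_K^2\kappa^2\Vert z\Vert_{L^{\infty}(K)},\qquad
|\nabla(z_h^c-z_{h,pw})|\le C\,h_K\kappa^2\Vert z\Vert_{L^{\infty}(K)},
\]
where for the gradient I have dropped the other nonnegative terms in the previous lemma and divided the bound $C h_K^2\kappa^2$ by $h_K$.

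Next I would integrate these pointwise bounds over the edge. Since $e$ is a chord of the smallest circle circumscribing either adjacent triangle, its length satisfies $|e|\le h_{K_j}$ for $j=1,2$. Hence the trace of $z_h^c-z_{h,pw}$ taken from $K_j$ obeys
\[
\Vert z_h^c-z_{h,pw}\Vert^2_{L^2(e)}\le |e|\,\bigl(C h_{K_j}^2\kappa^2\Vert z\Vert_{L^{\infty}(K_j)}\bigr)^2\le C\,h_{K_j}^5\kappa^4\Vert z\Vert_{L^{\infty}(K_j)}^2,
\]
and similarly the trace of the gradient obeys $\Vert\nabla_h(z_h^c-z_{h,pw})\Vert^2_{L^2(e)}\le C h_{K_j}^3\kappa^4\Vert z\Vert_{L^{\infty}(K_j)}^2$, which are exactly the single-sided versions of the two claimed estimates.

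Finally I would pass to the average across $e$. Writing $v=z_h^c-z_{h,pw}$ and using the elementary inequality $\Vert\avg{v}\Vert^2_{L^2(e)}\le \frac{1}{2}\bigl(\Vert v^+\Vert^2_{L^2(e)}+\Vert v^-\Vert^2_{L^2(e)}\bigr)$, where $v^{\pm}$ are the traces from $K_1$ and $K_2$, and inserting the two single-sided bounds just obtained yields precisely the stated sums over $j=1,2$; the gradient estimate follows identically. It is worth noting that the average (rather than the jump) appears here because $z_h^c$ is continuous across $e$ while $z_{h,pw}$ is only defined element by element, so no cancellation between the two sides can be exploited and one simply controls each side on its own triangle.

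I do not expect a serious obstacle: the entire mechanism is contained in the previous lemma, and the computation is routine. The only points requiring minor care are the continuity of $z$ needed to make sense of the nodal values $z(\mathbf{a}^K_j)$, which is supplied by the Sobolev embedding, and the edge-length bound $|e|\le h_K$ that converts the $L^{\infty}$ bounds into $L^2(e)$ bounds while producing the correct powers of $h_K$.
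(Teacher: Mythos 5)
Your argument is correct and delivers exactly the claimed bounds, but it takes a genuinely different route from the paper. The paper applies the trace inequality (\ref{trace}) to bound $\Vert \avg{z_h^c-z_{h,pw}}\Vert^2_{L^2(e)}$ by $\sum_j\bigl(h_{K_j}^{-1}\Vert z_h^c-z_{h,pw}\Vert^2_{L^2(K_j)}+h_{K_j}\Vert\nabla(z_h^c-z_{h,pw})\Vert^2_{L^2(K_j)}\bigr)$, then converts the pointwise bounds of the preceding lemma into volume $L^2$ bounds ($Ch_{K_j}^6\kappa^4\Vert z\Vert_{L^\infty(K_j)}^2$ and $Ch_{K_j}^4\kappa^4\Vert z\Vert_{L^\infty(K_j)}^2$, the extra $h_{K_j}^2$ coming from the element area); for the gradient estimate it repeats this with the second-derivative bound $|\nabla\nabla(\lambda_j^K-\lambda_{pw,j}^K)|\leq C\kappa^2$. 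You instead integrate the pointwise bounds directly over the edge, using $|e|\leq h_{K_j}$ and the elementary averaging inequality. The two routes give identical powers because $h_{K_j}^{-1}\cdot h_{K_j}^2=h_{K_j}$ reproduces the edge-length factor; yours is more elementary and, notably, never needs the $\nabla\nabla$ estimates from the preceding lemma, whereas the paper's gradient bound does. The only mild caveat in your version is that the pointwise estimates must hold up to $\overline{K}$ so that the supremum over $e\subset\partial K$ is controlled; this is immediate since $\lambda_j^K-\lambda_{pw,j}^K$ is smooth on the closed triangle, and your appeal to the Sobolev embedding to make sense of the nodal values $z(\mathbf{a}_j^K)$ matches what the paper itself uses later.
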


\begin{proof} Using the standard trace estimate
\[
\Vert \avg{z_h^c-z_{h,pw}}\Vert^2_{L^2(e)}\leq C\sum_{j=1}^2\left[
\frac{1}{h_{K_j}}\Vert z_h^c-z_{h,pw}\Vert_{L^2(K_j)}^2+
h_{K_j}\Vert \nabla(z_h^c-z_{h,pw})\Vert_{L^2(K_j)}^2\right].
\]
%By the previous lemma
%\[
%\Vert z_h^c-z_{h}^c\Vert_{L^2(K_j)}^2=\int_{K_j}\sum_{\ell=1}^3|z(\mathbf{a}_\ell)|^2|\lambda^{K_j}_{\ell}-
%\lambda^{K_j}_{\ell,pw}|^2
%\]
%
% and then 
Using the estimates for the basis functions in the previous lemma, on each triangle $K_j$,
\begin{eqnarray*}
\int_{K_j}|z_h^c-z_{h,pw}|^2\,ds&=&\int_{K_j}\left|\sum_{\ell=1}^3z(\mathbf{a}^{K_j}_\ell)(\lambda^K_\ell-\lambda^K_{h,\ell})\right|^2\,ds
\leq Ch_{K_j}^6\kappa^4\Vert z\Vert_{L^\infty(K_j)}^2.
\end{eqnarray*}
In the same way 
\begin{eqnarray*}
\int_{K_j}|\nabla(z_h^c-z_{h,pw})|^2\,ds&=&\int_{K_j}\left|\sum_{\ell=1}^3z(\mathbf{a}_\ell)\nabla(\lambda^{K_j}_\ell-\lambda^{K_j}_{h,\ell})\right|^2\,ds
\leq Ch_{K_j}^4\kappa^4\Vert z\Vert_{L^\infty(K_j)}^2.
\end{eqnarray*}
So
\[
\Vert \avg{z_h^c-z_{h,pw}}\Vert^2_{L^2(e)}\leq C\sum_{j=1}^2h_{K_j}^5\kappa^4\Vert z\Vert_{L^{\infty}(K_j)}.
\]

Using the standard trace estimate again (noting that the basis functions are piecewise smooth)
\[
\Vert \avg{\nabla_h(z_h^c-z_{h,pw})}\Vert^2_{L^2(e)}\leq C\sum_{j=1}^2\left[
\frac{1}{h_{K_j}}\Vert \nabla(z_h^c-z_{h,pw})\Vert_{L^2(K_j)}^2+
h_{K_j}\Vert \nabla\nabla(z_h^c-z_{h,pw})\Vert_{L^2(K_j)}^2\right].
\]
%By the previous lemma
%\[
%\Vert z_h^c-z_{h}^c\Vert_{L^2(K_j)}^2=\int_{K_j}\sum_{\ell=1}^3|z(\mathbf{a}_\ell)|^2|\lambda^{K_j}_{\ell}-
%\lambda^{K_j}_{\ell,pw}|^2
%\]
%
% and then 
Using the estimates for the basis functions in the previous lemma and noting that since $z_h^c$ is linear, $\nabla\nabla z_h^c=0$,
\begin{eqnarray*}
\int_{K_j}|\nabla\nabla(z_h^c-z_{h,pw})|^2\,dA&=&\int_{K_j}\left|\sum_{\ell=1}^3z(\mathbf{a}^{K_j}_\ell)(\nabla\nabla\lambda^{K_j}_{h,\ell})\right|^2
\leq Ch_{K_j}^2\kappa^4\Vert z\Vert_{L^\infty(K_j)}^2.
\end{eqnarray*}
So $\Vert \avg{\nabla_h(z_h^c-z_{h,pw})}\Vert^2_{L^2(e)}\leq C\sum_{j=1}^2h_{K_j}^3\kappa^4\Vert z\Vert_{L^{\infty}(K_j)}.$
This completes the proof.
\end{proof}

Now, since  $z_{h,pw}\in V_{h}$, by Galerkin orthogonality,
\[
\int_{\Omega}(u-u_{h})(\overline{u-u_{h}})\,dA=A_h(u-u_{h},z)=A_h(u-u_{h},z-z_{pw,h}).
\]
We first add and subtract the finite element piecewise linear interpolant on the mesh denoted $z^c_h$.  This is not in the plane wave subspace $V_{h}$ so no terms simplify:
\[
A_h(u-u_{h},z-z_{h})=A_h(u-u_{h},z-z^c_{h})+A_h(u-u_{h},z_h^c-z_{h,pw})
\]
We can now analyze the two terms on the right hand side above.
Using (\ref{Ahap}), the first term can be written
\begin{eqnarray*}
\lefteqn{A_h(u-u_{h},z-z_h^c)}\nonumber\\
&=&\int_{\cEI}\jmp{\nabla_h(u-u_{h})}\cdot\avg{\overline{z-z_h^c}}\,ds-\int_{\cEI}\jmp{ (u-u_{h})}\cdot\avg{\nabla_h(\overline{z-z_h^c})}\,ds\nonumber\\&&
-\frac{1}{i\kappa}\int_{\cEI}\beta\jmp{\nabla_h (u-u_{h})}\jmp{\nabla_h(\overline{z-z_h^c})}\,ds
+{i\kappa }\int_{\cEI}\alpha\jmp{ (u-u_{h})}\cdot\jmp{(\overline{z-z_h^c})}\,ds\nonumber\\&&
+\int_{\cEA}(1-\delta)\left[\frac{\partial (u-u_{h})}{\partial \nu}-i\kappa(u-u_{h})\right](\overline{z-z_h^c})\,ds\\&&-\frac{\delta}{i\kappa}
\int_{\cEA}\left[\frac{\partial (u-u_{h})}{\partial \nu}-i\kappa(u-u_{h})\right] \nabla_h(\overline{z-z_h^c})\cdot\nu\,ds\\
&&+\int_{\cED}(u-u_{h})(i\kappa\alpha(\overline{z-z_h^c})-\nabla_h(\overline{z-z_h^c})\cdot\nu)\,ds.
\end{eqnarray*}
Note that $z=z_h^c=0$ on $\cED$ and $\jmp{z-z_h^c}=0$ on $\cEI$.  In addition $u=0$ on $\cED$, and $u$ and its normal derivative are continuous across interior edges. Finally $u$ also satisfies the Dirichlet and impedance boundary conditions. So the above expression simplifies as follows:
\begin{eqnarray}
A_h(u-u_{h},z-z_h^c)&=&
-\int_{\cEI}\jmp{\nabla_hu_{h}}\cdot \avg{\overline{z-z_h^c}}\,ds+\int_{\cEI}\jmp{ u_{h}}\cdot\avg{\nabla_h(\overline{z-z_h^c})}ds\nonumber\\&&
+\frac{1}{i\kappa}\int_{\cEI}\beta\jmp{\nabla_h u_{h}}\jmp{\nabla_h(\overline{z-z_h^c})}\,ds
\nonumber\\&&
+\int_{\cEA}(1-\delta)\left[g_A-\frac{\partial u_{h}}{\partial\nu}+i\kappa u_{h}\right](\overline{z-z_h^c})\,ds\nonumber
\\&&-\frac{\delta}{i\kappa}
\int_{\cEA}\left[g_A-\frac{\partial u_{h}}{\partial \nu}+i\kappa u_{h}\right] \nabla_h(\overline{z-z_h^c})\cdot\nu\,ds\nonumber\\
&&+\int_{\cED}u_{h}\nabla_h(\overline{z-z_h^c})\cdot\nu\,ds.\label{t1}
\end{eqnarray}
Terms involving $z-z^c_{h}$ (non-differentiated) can be estimated via the standard trace estimate.  First
\begin{eqnarray*}
\lefteqn{\left|-\int_{\cEI}\jmp{\nabla_h u_{h}}\cdot\avg{\overline{z-z^c_{h}}}\,ds
+\int_{\cEA}(1-\delta)\left[g_A-\frac{\partial u_{h}}{\partial \nu}+i\kappa u_{h}\right](\overline{z-z^c_{h}})\,ds\right|}\\
&\leq&\sum_{e\in\cEI}\left[\Vert\beta^{-1/2}\avg{{z-z^c_{h}}}\Vert_{L^2(e)}\Vert\beta^{1/2}\jmp{\nabla_hu_{h}}\Vert_{L^2(e)}
\right]\\&&
+\sum_{e\in\cEA}\Vert(1-\delta)^{1/2}\left[g_A-\frac{\partial u_{h}}{\partial n}+i\kappa u_{h}\right]\Vert_{L^2(e)}\Vert(1-\delta)^{1/2}( {z-z^c_{h}})\Vert_{L^2(e)}.\\
\end{eqnarray*}
Using the usual  trace inequality (\ref{trace}), let $e$ be an edge in the mesh shared by elements $K_1$ and $K_2$ then
\begin{eqnarray*}
\Vert\beta^{-1/2}\avg{z-z^c_{h}}\Vert_{L^2(e)}&\leq& C\sum_{j=1}^2\left[\frac{1}{h^{1/2}_{K_j}}\Vert
z-z_h^c\Vert_{L^2(K_j)}+h^{1/2}_{K_j}\Vert \nabla (z-z_h^c)\Vert_{L^2(K_j)}\right]\\
&\leq &
C\sum_{j=1}^2h^{1+s}_{K_j}\vert
z\vert_{H^{3/2+s}(K_j)}.
\end{eqnarray*}
where we have also used an error estimate for the interpolant.  

The same estimate holds for the jump in $z-z_h^c$. Using the Cauchy-Schwarz inequality we arrive at
\begin{eqnarray*}
\lefteqn{\left|-\int_{\cEI}\jmp{\nabla_h u_{h}}\cdot\avg{\overline{z-z^c_{h}}}\,ds
+\int_{\cEA}(1-\delta)\left[g_A-\frac{\partial u_{h}}{\partial\nu}+i\kappa u_{h}\right](\overline{z-z^c_{h}})\,ds\right|}\\
&\leq&\left[\Vert\beta^{1/2}h_e^{1+s}\jmp{\nabla_hu_{h}}\Vert_{L^2(\cEI)}
+\Vert(1-\delta)^{1/2}h_e^{1+s}\left[g_A-\frac{\partial u_{h}}{\partial n}+iku_{h}\right]\Vert_{L^2(\cEA)}\right]\vert z\vert_{H^{3/2+s}(\Omega)}.
\end{eqnarray*}

Now we must perform the same estimate for terms in (\ref{t1}) involving derivatives of $z-z_h^c$.
\begin{eqnarray*}
&&\left|\int_{\cEI}\jmp{ u_{h}}\avg{\nabla_h(\overline{z-z^c_{h}})}\,ds-\frac{1}{i\kappa}\int_{\cEI}\beta\jmp{\nabla_h u_{h}}\jmp{\nabla_h(\overline{z-z^c_{h}})}\,ds\right.\\&&\left.-\frac{\delta}{i\kappa}
\int_{\cEA}\left[g_A-\frac{\partial u_{h}}{\partial \nu}+iku_{h}\right]\nabla_h (\overline{z-z^c_{h}})\cdot\nu\,ds
+\int_{\cED}u_{h}\nabla_h(\overline{z-z_h^c})\cdot\nu\,ds\right|\\
&\leq&\sum_{e\in \cEI}\Vert \alpha^{1/2}\jmp{ u_{h}}\Vert_{L^2(e)}\Vert\alpha^{-1/2}\avg{\nabla_h(z-z^c_{h})}\Vert_{L^2(e)}\\&&+\sum_{e\in \cEI}\frac{1}{\kappa}\Vert\beta^{1/2}\jmp{\nabla_h u_{h}}\Vert_{L^2(e)}\Vert\beta^{1/2}\jmp{\nabla_h({z-z^c_{h}})}\Vert_{L^2(e)}\\&&+\sum_{e\in \cEA}\frac{1}{\kappa}
\Vert\delta^{1/2}\left[g_A-\frac{\partial u_{h}}{\partial \nu}+i\kappa u_{h}\right]\Vert_{L^2(e)}\Vert\delta^{1/2}\frac{\partial ({z-z^c_{h}})}{\partial \nu}\Vert_{L^2(e)}\\
&&+\sum_{e\in \cED}\Vert \alpha^{1/2} u_{h}\Vert_{L^2(e)}\Vert \alpha^{-1/2} \nabla_h(z-z_h^c)\cdot\nu\Vert_{L^2(e)}.
\end{eqnarray*}
We proceed as for the previous estimates. On an edge $e$ between $K_1$ and $K_2$ we have, using the trace estimate (\ref{trace2}):
\[
\Vert\alpha^{-1/2}\avg{\nabla_h(z-z^c_{h})}\Vert_{L^2(e)}
\leq C\sum_{j=1}^2\left[\frac{1}{h_{K_j}^{1/2}}\Vert\nabla(z-z_h^c)\Vert_{L^2(K_j)}
+h_{K_j}^{s}\vert \nabla(z-z_h^c)\vert_{H^{1/2+s}(K_j)}\right].
\]
Since $z_h^c$ is piecewise linear
$\vert \nabla(z-z_h^c)\vert_{H^{1/2+s}(K_j)}=\vert \nabla z \vert_{H^{1/2+s}(K_j)}$.  Using usual
estimates for the interpolant:
\[
\Vert\alpha^{-1/2}\avg{\nabla_h(z-z^c_{h})}\Vert_{L^2(e)}
\leq C\sum_{j=1}^2h_{K_j}^{s}\vert z\vert_{H^{3/2+s}(K_j)}.
\]
Other average and jump terms can be estimated in the same way.  We arrive at
\begin{eqnarray*}
&&\left|\int_{\cEI}\jmp{ u_{h}}\avg{\nabla_h(\overline{z-z^c_{h}})}\,ds+\frac{1}{i\kappa}\int_{\cEI}\beta\jmp{\nabla_h u_{h}}\jmp{\nabla_h(\overline{z-z^c_{h}})}\,ds\right.\\&&\left.-\frac{\delta}{i\kappa}
\int_{\cEA}\left[g_A-\frac{\partial u_{h}}{\partial \nu}+i\kappa u_{h}\right]\nabla_h (\overline{z-z^c_{h}})\cdot\nu\,ds
+\int_{\cED}u_{h}\nabla_h(\overline{z-z_h^c})\cdot\nu\,ds\right|\\
&\leq&C\left[\Vert \alpha^{1/2}h_e^{s}\jmp{ u_{h}}\Vert_{L^2(\cEI)}+\frac{1}{\kappa}\Vert\beta^{1/2}h_e^{s}\jmp{\nabla_h
u_{h}}\Vert_{L^2(\cEI)}\right.\\&&\left.+\frac{1}{\kappa}
\Vert\delta^{1/2}h_e^{s}\left[g_A-\frac{\partial u_{h}}{\partial \nu}+i\kappa u_{h}\right]\Vert_{L^2(\cEA)}+\Vert h_e^s \alpha^{1/2} u_{h}\Vert_{L^2(\cED)}\right]
\vert z\vert_{H^{3/2+s}(\Omega)}.
\end{eqnarray*}
\begin{lemma}  For $h$ small enough, under the conditions on the mesh stated in Section~\ref{notation.sec}, there exists a constant $C$ such that
\begin{eqnarray*}
\lefteqn{|A_h(u-u_{h},z-z^c_{h})|\leq C\left[\Vert\beta^{1/2}h_e^{1+s}\jmp{\nabla_hu_{h}}\Vert_{L^2(\cEI)}\right.}
\\&&+\Vert(1-\delta)^{1/2}h_e^{1+s}\left[g_A-\frac{\partial u_{h}}{\partial \nu}+i\kappa u_{h}\right]\Vert_{L^2(\cEA)}\\
&&+\left.\Vert \alpha^{1/2}h_e^{s}\jmp{ u_{h}}\Vert_{L^2(\cEI)}+\frac{1}{\kappa}\Vert\beta^{1/2}h_e^{s}\jmp{\nabla_h
u_{h}}\Vert_{L^2(\cEI)}\right.\\&&\left.+\frac{1}{\kappa}
\Vert\delta^{1/2}h_e^{s}\left[g_A-\frac{\partial u_{h}}{\partial \nu}+i\kappa u_{h}\right]\Vert_{L^2(\cEA)}+
\Vert h_e^{s}\alpha^{1/2} u_{h}\Vert_{L^2(\cED)}\right]
\vert z\vert_{H^{3/2+s}(\Omega)}.
\end{eqnarray*}
Here $C$ is independent of the mesh and the solution.
\end{lemma}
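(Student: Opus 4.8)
The plan is to start from the already-simplified edge representation (\ref{t1}) of $A_h(u-u_{h},z-z_h^c)$, which is obtained by inserting the formula (\ref{Ahap}) and discarding every term that vanishes: the jumps $\jmp{z-z_h^c}$ on $\cEI$ (continuity of $z$ together with that of the piecewise linear interpolant), the traces of $z$ and $z_h^c$ on $\cED$, and all terms in which $u-u_{h}$ contributes a quantity that vanishes because $u$ solves the Helmholtz equation, has continuous trace and normal derivative across interior edges, and satisfies the Dirichlet and impedance conditions. Each surviving integral in (\ref{t1}) pairs a computable \emph{residual} of $u_{h}$ (a jump of $u_{h}$ or of $\nabla_h u_{h}$, or the impedance/Dirichlet residual containing $g_A$) against either $z-z_h^c$ or $\nabla_h(z-z_h^c)$, so the whole estimate reduces to controlling these two families of dual factors.

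The core is a pair of per-edge bounds organised by whether the interpolation-error factor is differentiated. I would first split the right side of (\ref{t1}) into a \emph{non-differentiated} group (terms paired with $z-z_h^c$) and a \emph{differentiated} group (terms paired with $\nabla_h(z-z_h^c)$). On each edge $e$ shared by $K_1,K_2$ I apply Cauchy--Schwarz with the penalty weights $\alpha^{\pm1/2},\beta^{\pm1/2},\delta^{\pm1/2},(1-\delta)^{1/2}$ inserted so that the residual factor already carries the weight appearing in the final bound, and I then control the dual factor by a trace inequality followed by a standard interpolation estimate. For the non-differentiated group, (\ref{trace}) gives the quantity $h_{K_j}^{-1/2}\Vert z-z_h^c\Vert_{L^2(K_j)}+h_{K_j}^{1/2}\Vert\nabla(z-z_h^c)\Vert_{L^2(K_j)}$; since $\Vert z-z_h^c\Vert_{L^2(K_j)}$ is of order $h_{K_j}^{3/2+s}\vert z\vert_{H^{3/2+s}(K_j)}$ and $\Vert\nabla(z-z_h^c)\Vert_{L^2(K_j)}$ of order $h_{K_j}^{1/2+s}\vert z\vert_{H^{3/2+s}(K_j)}$, both contributions combine into a uniform factor $h_{K_j}^{1+s}\vert z\vert_{H^{3/2+s}(K_j)}$. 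For the differentiated group I instead use (\ref{trace2}), exploiting that $z_h^c$ is piecewise linear so that $\vert\nabla(z-z_h^c)\vert_{H^{1/2+s}(K_j)}=\vert\nabla z\vert_{H^{1/2+s}(K_j)}$; the two contributions then combine to $h_{K_j}^{s}\vert z\vert_{H^{3/2+s}(K_j)}$, one power of $h$ lower than before because differentiation costs one order of approximation, with the $\beta$- and $\delta$-gradient terms additionally carrying their prefactor $\kappa^{-1}$ from (\ref{Ahap}).

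Having established these local bounds, I sum over edges and apply a discrete Cauchy--Schwarz inequality to factor $\vert z\vert_{H^{3/2+s}(\Omega)}$ out of the sum, collecting the weighted residual norms over $\cEI$, $\cEA$ and $\cED$ into the terms displayed in the statement. At this stage the edgewise $h_{K_j}$ must be converted to the single edge length $h_e$ of the conclusion, and this is exactly where the standing mesh hypotheses enter: local quasi-uniformity gives $h_{K_1}\simeq h_{K_2}\simeq h_e$ with a uniform constant, while shape regularity and the affine-reference-element assumption keep the trace and interpolation constants uniform. The only genuinely delicate point is the bookkeeping of weights and $h$-powers: one must check that inserting $\beta^{\pm1/2}$ (and likewise $\alpha,\delta$) is compatible with the trace/interpolation chain so that precisely $h_e^{1+s}$ accompanies the non-differentiated residuals and $h_e^{s}$ the differentiated ones, with the $\kappa^{-1}$ factors surviving on the correct terms. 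Once this matching is verified, adding the two groups via the triangle inequality yields the asserted estimate.
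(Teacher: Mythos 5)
Your proposal follows essentially the same route as the paper: it starts from the simplified edge representation (\ref{t1}), splits the terms according to whether the dual factor is $z-z_h^c$ or $\nabla_h(z-z_h^c)$, applies weighted Cauchy--Schwarz on each edge, and then uses the trace inequalities (\ref{trace}) and (\ref{trace2}) together with standard interpolation estimates (and the observation that $\vert\nabla(z-z_h^c)\vert_{H^{1/2+s}(K)}=\vert\nabla z\vert_{H^{1/2+s}(K)}$) to obtain the factors $h_e^{1+s}$ and $h_e^{s}$ respectively, with local quasi-uniformity converting $h_{K_j}$ to $h_e$. This matches the paper's argument in both structure and detail, so the proof is correct as proposed.
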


It remains to estimate $A_h(u-u_{h},z_h^c-z_{h,pw})$. Recall that  $z_{h,pw}$ is defined element by element according to Lemma \ref{zdef} and
\begin{eqnarray*}
A_h(u-u_{h},z_h^c-z_{h,pw})
&=&-\int_{\cEI}\jmp{\nabla_hu_{h}}\cdot \avg{\overline{z^c_h-z_{h,pw}}}\,ds+\int_{\cEI}\jmp{ u_{h}}\avg{\nabla_h(\overline{z^c_h-z_{h,pw}})}\,ds\\&&
+\frac{1}{i\kappa}\int_{\cEI}\beta\jmp{\nabla_h u_{h}}\jmp{\nabla_h(\overline{z_h^c-z_{h,pw}})}\,ds
-{i\kappa}\int_{\cEI}\alpha\jmp{ u_{h}}\jmp{(\overline{z^c_h-z_{h,pw}})}\,ds\\&&
+\int_{\cEA}(1-\delta)\left[g_A-\frac{\partial u_{h}}{\partial \nu}+i\kappa u_{h}\right](\overline{z_h^c-z_{h,pw}})\,ds\\&&-\frac{\delta}{i\kappa}
\int_{\cEA}\left[g_A-\frac{\partial u_{h}}{\partial \nu}+i\kappa u_{h}\right]\nabla_h (\overline{z_h^c-z_{h,pw}})\cdot\nu\,ds\\
&&-\int_{\cED}u_{h}(i\kappa\alpha(\overline{z_h^c-z_{h,pw}})-\nabla_h(\overline{z_h^c-z_{h,pw}})\cdot\nu)\,ds.
\end{eqnarray*}
As before considering an edge $e$ between elements $K_1$ and $K_2$ and using the fact that $\beta$ is constant:
\begin{eqnarray*}
\left|\int_{e}\avg{\overline{z^c_h-z_{h,pw}}}\cdot\jmp{\nabla_h(u-u_{h})}\,ds\right|&\leq &\Vert \beta^{1/2} h^{3/2}_e\jmp{\nabla_h
u_{h}}\Vert_{L^2(e)}\Vert\beta^{-1/2} h_e^{-3/2}\avg{z^c_h-z_{h,pw}}\Vert_{L^2(e)}\\
&\leq &C\Vert \beta^{1/2} h_e^{3/2} \jmp{\nabla_h
u_{h}}\Vert_{L^2(e)}\sqrt{\sum_{j=1}^2h_e^{-3}h_{K_j}^5\kappa^4\Vert z\Vert_{L^{\infty}(K_j)}^2}\\
&\leq & C\kappa^2\Vert \beta^{1/2} h_e^{3/2} \jmp{\nabla_h
u_{h}}\Vert_{L^2(e)}\sqrt{\sum_{j=1}^2h_{K_j}^2\Vert z\Vert_{L^{\infty}(K_j)}^2},
\end{eqnarray*}
Now adding over all edges in $\cEI$
\begin{eqnarray*}
\left|\int_{\cEI}\avg{\overline{z^c_h-z_{h,pw}}}\cdot\jmp{\nabla_h(u-u_{h})}\,ds\right|&\leq & C\kappa^2
\Vert \beta^{1/2} h_e^{3/2} \jmp{\nabla_h
u_{h}}\Vert_{L^2(\cEI)}\sqrt{\sum_{K\in T_h}h_{K_j}^2\Vert z\Vert_{L^{\infty}(K)}^2}  \\&\leq&
C\kappa^2
\Vert \beta^{1/2} h_e^{3/2} \jmp{\nabla_h
u_{h}}\Vert_{L^2(\cEI)}\Vert z\Vert_{L^{\infty}(\Omega)}\\
&\leq & C\kappa^2
\Vert \beta^{1/2} h_e^{3/2} \jmp{\nabla_h
u_{h}}\Vert_{L^2(\cEI)}\Vert z\Vert_{H^{s+3/2}(\Omega)}
\end{eqnarray*}
where we have used the Sobolev embedding theorem to estimate $\Vert z\Vert_{L^{\infty}(\Omega)}$.
Similarly
\begin{eqnarray*}
\left|\int_{\cEI}\jmp{u_{h}}\avg{\nabla_h(\overline{z^c_h-z_{h,pw}})}\,ds\right|&\leq &
\Vert\alpha^{1/2}h_e^{1/2}\jmp{ u_{h}}\Vert_{L^2(e)}
\Vert h_e^{-1/2}\avg{ \nabla_h(z_h^c-z_{h,pw})}\Vert_{L^2(e)}\\
&\leq&C\kappa^2\Vert h_e^{1/2}\alpha^{1/2}\jmp{ u_{h}}\Vert_{L^2(e)}\sqrt{\sum_{j=1}^2h_{K_j}^2\Vert z\Vert_{L^{\infty}(K_j)}^2}
\end{eqnarray*}
Proceeding as above we can estimates each of the terms in the expansion of $A_h$.

\begin{lemma}
Under the assumptions on the mesh in Section~\ref{notation.sec} there is a constant
$C$ independent of $h$, $u$ and $u_{h}$ such that
\begin{eqnarray*}
|A_h(u-u_{h},z_h^c-z_{h,pw})|&\leq&C\left[\Vert\beta^{1/2}h_e^{3/2}\jmp{\nabla_hu_{h}}\Vert_{L^2(\cEI)}+
\Vert \alpha^{1/2}h_e^{3/2}\jmp{u_{h}}\Vert_{L^2(\cEI)}\right.\\&&
+\Vert(1-\delta)^{1/2}h_e^{3/2}\left[g_A-\frac{\partial u_{h}}{\partial \nu}+i\kappa u_{h}\right]\Vert_{L^2(\cEA)}\\
&&+\left.\Vert \alpha^{1/2}h_e^{1/2}\jmp{ u_{h}}\Vert_{L^2(\cEI)}+\frac{1}{\kappa}\Vert\beta^{1/2}h_e^{1/2}\jmp{\nabla_h
u_{h}}\Vert_{L^2(\cEI)}\right.\\&&\left.+\frac{1}{k}
\Vert\delta^{1/2}h_e^{1/2}\left[g_A-\frac{\partial u_{h}}{\partial \nu}+i\kappa u_{h}\right]\Vert_{L^2(e)}\right]
\Vert z\Vert_{H^{3/2+s}(\Omega)}.
\end{eqnarray*}
\end{lemma}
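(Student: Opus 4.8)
The plan is to bound the seven edge integrals in the displayed expansion of $A_h(u-u_{h},z_h^c-z_{h,pw})$ one at a time, exactly as in the two representative estimates worked out above, and then to assemble the local bounds into a single global estimate. On each edge $e$ shared by $K_1$ and $K_2$ I would apply the Cauchy--Schwarz inequality, splitting the integrand into a factor carrying the $u_{h}$-residual and a factor carrying the interpolation difference $z_h^c-z_{h,pw}$ (or its gradient). The key device is to insert a compensating power of $h_e$: for every term that pairs a residual against the undifferentiated difference $\avg{z_h^c-z_{h,pw}}$ or $\jmp{z_h^c-z_{h,pw}}$ I write $1=h_e^{3/2}h_e^{-3/2}$, so that the residual acquires the weight $h_e^{3/2}$ and what remains is $h_e^{-3/2}\avg{z_h^c-z_{h,pw}}$; for every term that pairs a residual against the differentiated difference $\avg{\nabla_h(z_h^c-z_{h,pw})}$ I instead insert $1=h_e^{1/2}h_e^{-1/2}$, leaving $h_e^{-1/2}\avg{\nabla_h(z_h^c-z_{h,pw})}$ and giving the residual the weight $h_e^{1/2}$.

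At this point I would invoke Lemma~\ref{zdef} together with local quasi-uniformity, which allows me to identify $h_e$ with $h_{K_1}$ and $h_{K_2}$ up to the constant $\tau$. The first bound of Lemma~\ref{zdef} gives $\Vert h_e^{-3/2}\avg{z_h^c-z_{h,pw}}\Vert_{L^2(e)}\le C\kappa^2\big(\sum_{j=1}^2 h_{K_j}^2\Vert z\Vert_{L^\infty(K_j)}^2\big)^{1/2}$, and the second bound gives the same control $C\kappa^2\big(\sum_{j=1}^2 h_{K_j}^2\Vert z\Vert_{L^\infty(K_j)}^2\big)^{1/2}$ for $\Vert h_e^{-1/2}\avg{\nabla_h(z_h^c-z_{h,pw})}\Vert_{L^2(e)}$, the jump terms being identical. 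Thus every one of the seven terms reduces to a residual contribution weighted by $h_e^{3/2}$ or $h_e^{1/2}$, multiplied by the common factor $C\kappa^2\big(\sum_{j}h_{K_j}^2\Vert z\Vert_{L^\infty(K_j)}^2\big)^{1/2}$. The $\cED$ integral is split into its undifferentiated and differentiated pieces and estimated by these same two recipes, producing the corresponding Dirichlet contributions.

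To finish I would sum over all edges and apply the discrete Cauchy--Schwarz inequality: the residual factors assemble into the $L^2(\cEI)$, $L^2(\cEA)$ and $L^2(\cED)$ norms on the right-hand side of the lemma, while the $z$-factors assemble into $C\kappa^2\big(\sum_{K\in{\cal T}_h}h_K^2\Vert z\Vert_{L^\infty(K)}^2\big)^{1/2}$. Using shape regularity, which yields $h_K^2\le C|K|$, I bound $\sum_K h_K^2\Vert z\Vert_{L^\infty(K)}^2\le C|\Omega|\,\Vert z\Vert_{L^\infty(\Omega)}^2$, and then the Sobolev embedding $H^{3/2+s}(\Omega)\hookrightarrow L^\infty(\Omega)$ (valid in two dimensions since $3/2+s>1$) replaces $\Vert z\Vert_{L^\infty(\Omega)}$ by $\Vert z\Vert_{H^{3/2+s}(\Omega)}$. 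Absorbing the $\kappa^2$ produced by Lemma~\ref{zdef}, the area $|\Omega|$, and the embedding constant into $C$ (which is required only to be independent of $h$, $u$ and $u_{h}$, and may depend on $\kappa$), while leaving explicit the $1/\kappa$ factors carried by the $\beta$- and $\delta$-penalty terms of the form, then yields the stated estimate.

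The termwise Cauchy--Schwarz bounds are routine once Lemma~\ref{zdef} and the two worked examples are in hand, so the genuinely delicate points are two bookkeeping ones. First, for each of the seven terms one must choose the correct compensating power of $h_e$ so that the residual ends up weighted by precisely $h_e^{3/2}$ or $h_e^{1/2}$ while the $z$-side collapses to the single common factor; tracking the $1/\kappa$ prefactors through this is what produces the two $1/\kappa$-weighted terms. Second, the passage from the purely local $L^\infty(K_j)$ bounds of Lemma~\ref{zdef} to the global $H^{3/2+s}(\Omega)$ norm is the only place where shape regularity (needed to sum the weights $h_K^2$) and the Sobolev embedding into $L^\infty$ are essential, and this is where I expect the main care to be required.
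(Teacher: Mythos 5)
Your proposal is correct and follows essentially the same route as the paper: termwise Cauchy--Schwarz with compensating weights $h_e^{\pm 3/2}$ or $h_e^{\pm 1/2}$, Lemma~\ref{zdef} plus local quasi-uniformity to control the $z_h^c-z_{h,pw}$ factors, and summation over edges followed by the Sobolev embedding $H^{3/2+s}(\Omega)\hookrightarrow L^\infty(\Omega)$. The paper only works out two representative terms and asserts the rest ``proceeding as above,'' so your write-up, including the explicit treatment of the $\cED$ integral, is if anything slightly more complete than the printed argument.
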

Since $s\leq 1/2$  and using the estimates for $\Vert z\Vert_{H^{3/2+s}(\Omega)}$ from Section \ref{estimates.sec} we obtain:
\begin{theorem}
\label{apost2.thm} % TW
Under  the assumptions on the mesh in Section~\ref{notation.sec}, for any sufficiently fine mesh there is a constant
$C$ independent of $h$, $u$ and $u_{h}$ such that\begin{eqnarray*}
\Vert  u -u_{h}\Vert_{L^2(\Omega)}&\leq&C\left[\Vert \alpha^{1/2}h_e^{s}\jmp{ u_{h}}\Vert_{L^2(\cEI)}+\frac{1}{\kappa}\Vert\beta^{1/2}h_e^{s}\jmp{\nabla_h
u_{h}}\Vert_{L^2(\cEI)}\right.\\&&\left.+\frac{1}{\kappa}
\Vert\delta^{1/2}h_e^{s}\left[g_A-\frac{\partial u_{h}}{\partial \nu}+i\kappa u_{h}\right]\Vert_{L^2(\cEA)}
+\Vert\alpha^{1/2}h_e^{s}u_{h}\Vert_{L^2(\cED)}\right]
\end{eqnarray*}
\end{theorem}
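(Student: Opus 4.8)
The plan is to run the standard duality (Aubin--Nitsche) argument, treating the two preceding lemmas as established bounds and spending the remaining effort on consolidating the powers of $h_e$ and cancelling one factor of the error. Starting from the adjoint-consistency/Galerkin-orthogonality identity
\[
\Vert u-u_{h}\Vert_{L^2(\Omega)}^2=A_h(u-u_{h},z)=A_h(u-u_{h},z-z_{h,pw}),
\]
valid because $z_{h,pw}\in V_h$, I would insert the continuous piecewise linear interpolant $z_h^c$ and use the splitting
\[
A_h(u-u_{h},z-z_{h,pw})=A_h(u-u_{h},z-z_h^c)+A_h(u-u_{h},z_h^c-z_{h,pw}),
\]
so that the two preceding lemmas bound the right-hand side by a sum of weighted residual norms multiplied by $\vert z\vert_{H^{3/2+s}(\Omega)}$ and by $\Vert z\Vert_{H^{3/2+s}(\Omega)}$ respectively.

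The first real step is to collapse all the powers of $h_e$ onto the single exponent $s$. Because $s\le 1/2$ and every edge satisfies $h_e\le d_\Omega$, the inequalities $h_e^{1/2}\le C h_e^s$, $h_e^{3/2}\le C h_e^s$ and $h_e^{1+s}\le C h_e^s$ hold with constants depending only on $d_\Omega$ and $s$. Since $\alpha$, $\beta$ and $\delta$ are now fixed positive constants (with $\delta<1$), the weights $(1-\delta)^{1/2}$, $\delta^{1/2}$, $\alpha^{1/2}$ and $\beta^{1/2}$ are mutually comparable and may be interchanged up to constants. The two contributions in the $z-z_h^c$ estimate that carry $h_e^{1+s}$ but no factor $\kappa^{-1}$ are absorbed into the corresponding $\kappa^{-1}h_e^s$ terms at the price of a factor $d_\Omega\kappa$, which is harmless because $C$ is permitted to depend on $\kappa$ and $d_\Omega$. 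After this consolidation each of the $\jmp{u_{h}}$, $\jmp{\nabla_h u_{h}}$, impedance-residual and Dirichlet-residual contributions from both lemmas reduces to a constant multiple of exactly one of the four terms in the statement, while both dual-norm factors are dominated by $\Vert z\Vert_{H^{3/2+s}(\Omega)}$.

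Next I would invoke the adjoint regularity estimates of Section~\ref{estimates.sec}: combining the bound on $\sqrt{\Vert\nabla z\Vert_{L^2(\Omega)}^2+\kappa^2\Vert z\Vert_{L^2(\Omega)}^2}$ with the seminorm bound (\ref{eq:sdef}) gives
\[
\Vert z\Vert_{H^{3/2+s}(\Omega)}\le C(\kappa,d_\Omega)\Vert u-u_{h}\Vert_{L^2(\Omega)}.
\]
Substituting this into the consolidated estimate yields $\Vert u-u_{h}\Vert_{L^2(\Omega)}^2\le C\eta\,\Vert u-u_{h}\Vert_{L^2(\Omega)}$, where $\eta$ denotes the bracketed indicator in the statement; dividing through by $\Vert u-u_{h}\Vert_{L^2(\Omega)}$ (the case of vanishing error being trivial) gives the theorem.

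The genuinely delicate part of the whole argument is not in this final assembly but upstream, in Lemma~\ref{zdef}: the improvement from the inverse-power weights of Section~\ref{estimates.sec} to the $h_e^s$ weights rests entirely on the plane-wave approximation of the barycentric (piecewise linear) functions, whose error is controlled by $\kappa^2 h_K^2$ rather than by an inverse power of $h_K$. Since that lemma is already in hand, the only points demanding care in the present proof are the uniformity of the constants in $h$ (which is why the statement is restricted to sufficiently fine meshes, so that the plane-wave approximation lemmas apply) and the bookkeeping that matches the $\kappa^{-1}$ factors across the two lemmas. I expect the matching of those $\kappa^{-1}$ factors, and checking that no term with the wrong power of $\kappa$ survives the consolidation, to be the main place where an error could creep in.
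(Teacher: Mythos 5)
Your proposal is correct and follows essentially the same route as the paper: the duality/Galerkin-orthogonality identity $\Vert u-u_h\Vert^2_{L^2(\Omega)}=A_h(u-u_h,z-z_{h,pw})$, the splitting through the piecewise linear interpolant $z_h^c$, invocation of the two preceding lemmas, consolidation of the powers $h_e^{1+s}$, $h_e^{1/2}$, $h_e^{3/2}$ into $h_e^s$ using $s\leq 1/2$, and finally the adjoint regularity bound $\Vert z\Vert_{H^{3/2+s}(\Omega)}\leq C\Vert u-u_h\Vert_{L^2(\Omega)}$ followed by division by the error. The paper compresses this final assembly into a single sentence, and your write-up simply makes that bookkeeping explicit.
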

\begin{remark}
The right hand side is now a new a posteriori error indicator for PWDG. Note there is no longer an overall factor of $h^{-1/2}$
compared to the estimate in Theorem \ref{MPH_L2}.  We have not traced the dependence of the constant $C$ on $\kappa$ and  $d_\Omega$ but this could be carried out (however the marking strategy is independent of the overall constant). In practice we find the choice $s=0$ gives the a reliable but pessimistic indicator.
\end{remark}

\section{Numerical Results}
\label{resultsII.sec}

We now test the new residual estimators derived in the previous section using the UWVF choice of parameters $\alpha=\beta=\delta=1/2$. 
In the following numerical tests we iteratively apply the classical refinement sequence
\newline\begin{center}{SOLVE \; - \; ESTIMATE \; - \; MARK \; - \; REFINE}\end{center}

\vspace{8pt}\noindent In the ESTIMATE phase of the following experiments we 
rank the effective contributions to the righthand side of the
a posteriori bound given in Theorem \ref{apost2.thm} from the element $K$ 
using a proxy for the residual formula
\begin{eqnarray*}
\eta_{K} & = & \Vert \alpha^{1/2}h_e^{s}\jmp{ u_{h}}\Vert_{L^2(\partial K)}^2
+\frac{1}{\kappa^2}\Vert\beta^{1/2}h_e^{s}\jmp{\nabla_h u_{h}}\Vert_{L^2(K)}^2\\
& & + \frac{1}{\kappa^2} \Vert\delta^{1/2}h_e^{s}\left[g_A-\frac{\partial u_{h}}{\partial \nu}+i\kappa u_{h}\right]\Vert_{L^2(\partial K)}^2
+\Vert\alpha^{1/2}h_e^{s}u_{h}\Vert_{L^2(K)}^2.
\end{eqnarray*}
Following D\"{o}rfler \cite{dorfler1996convergent} the elements 
responsible for the top $\theta$ fraction of $\eta := \sum_{K} \eta_{K}$ are
marked for refinement in the MARK phase. In the REFINE phase we use a recursive longest edge bisection
\cite{mitchell1989comparison}  to produce a new mesh with guaranteed
lower bounds for the smallest element angles. The recursive longest edge bisection algorithm is 
chosen because it propagates the refinement beyond the elements marked in the MARK phase to achieve this goal.

We start with several results for the regular Bessel function solution considered in Section~\ref{sec:estimate} and defined by equation (\ref{reg_exact}).  Since we are on the L-shaped domain we choose $s=1/6$.
These results can be compared to the results in Figs.~\ref{N5_s_MPH} and \ref{N7_s_MPH}. Although the efficiency
shown in the right hand column for each choice of $p_K$ still deteriorates for the $L^2$ norm as the mesh is refined, the rate of rise is less compared to the previous indicator.  In addition the efficiency of the indicator improves for larger $p_K$.  

\begin{figure}
\begin{center}
\begin{tabular}{ccc}
\resizebox{0.4\textwidth}{!}{\includegraphics{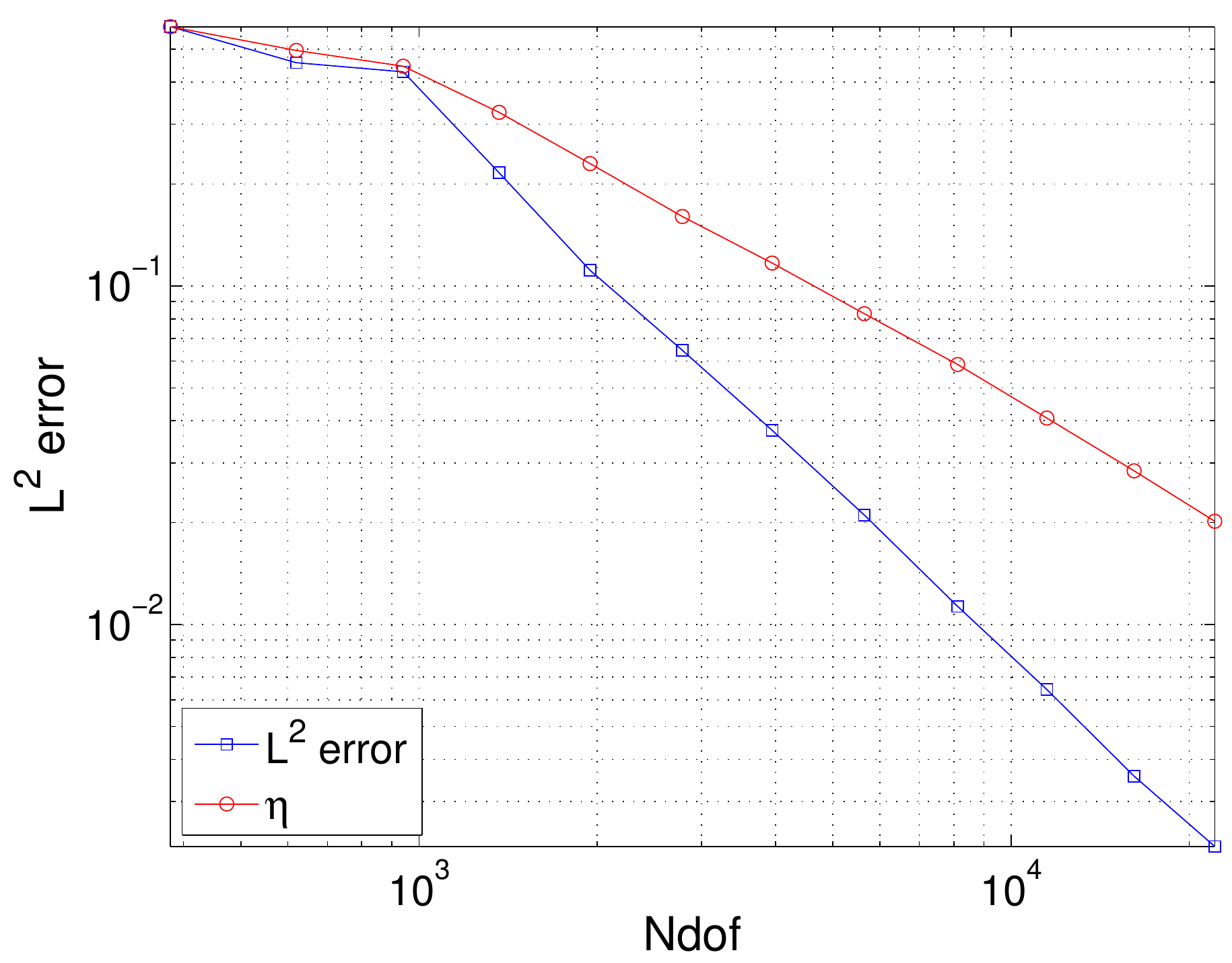}}&
\resizebox{0.4\textwidth}{!}{\includegraphics{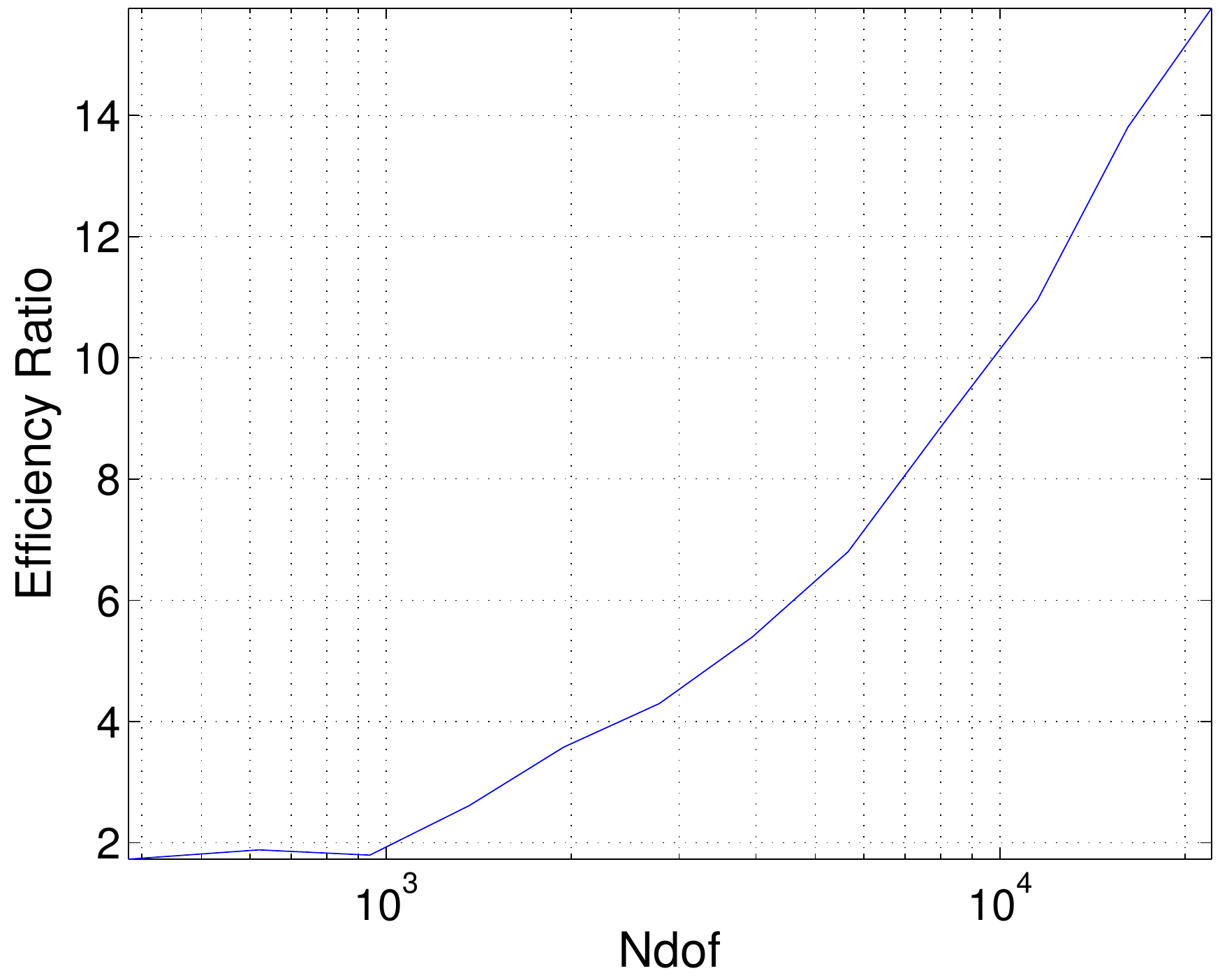}}\\%&
%\resizebox{0.3\textwidth}{!}{\includegraphics{./H1-L-s-5}}\\
\resizebox{0.4\textwidth}{!}{\includegraphics{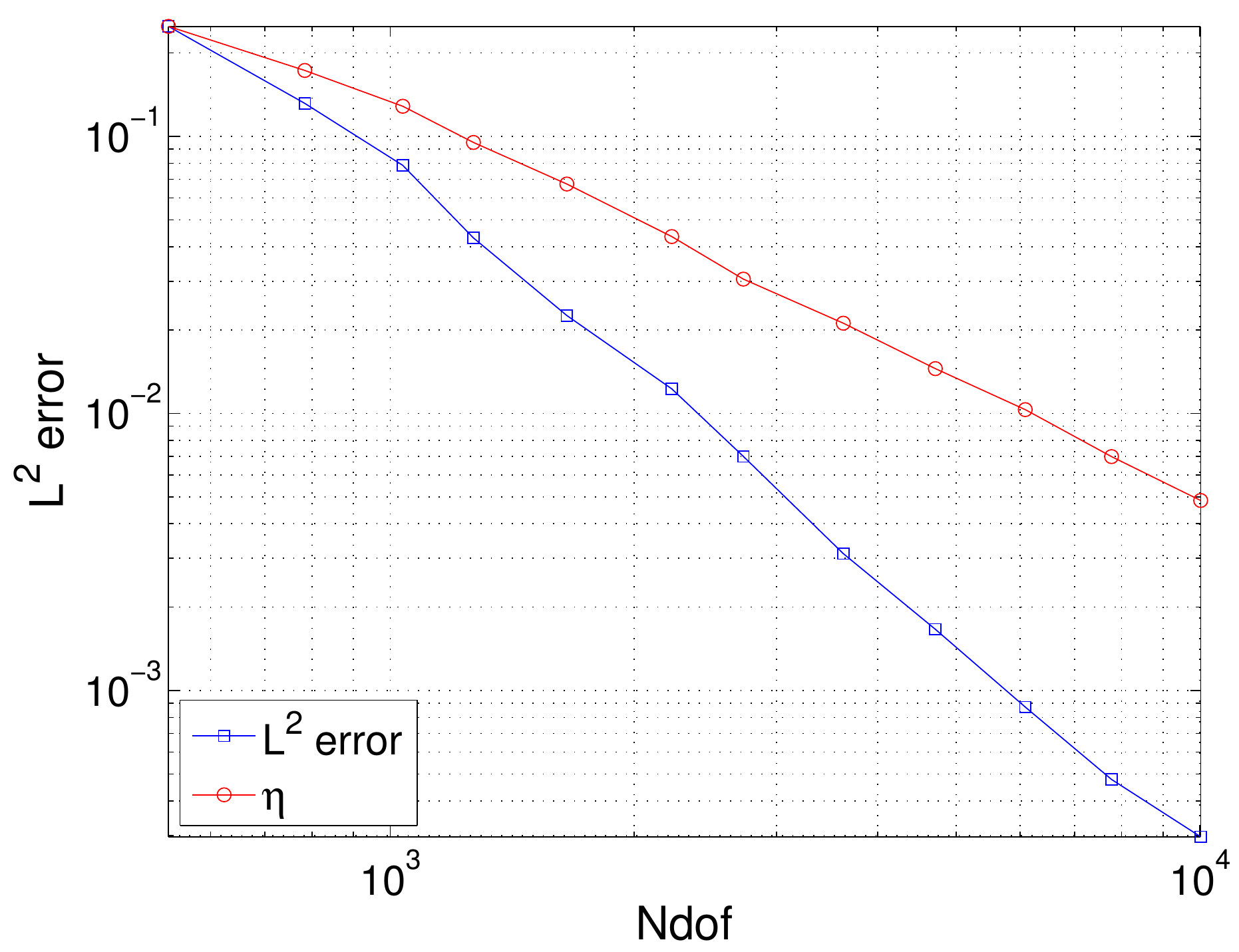}}&
\resizebox{0.4\textwidth}{!}{\includegraphics{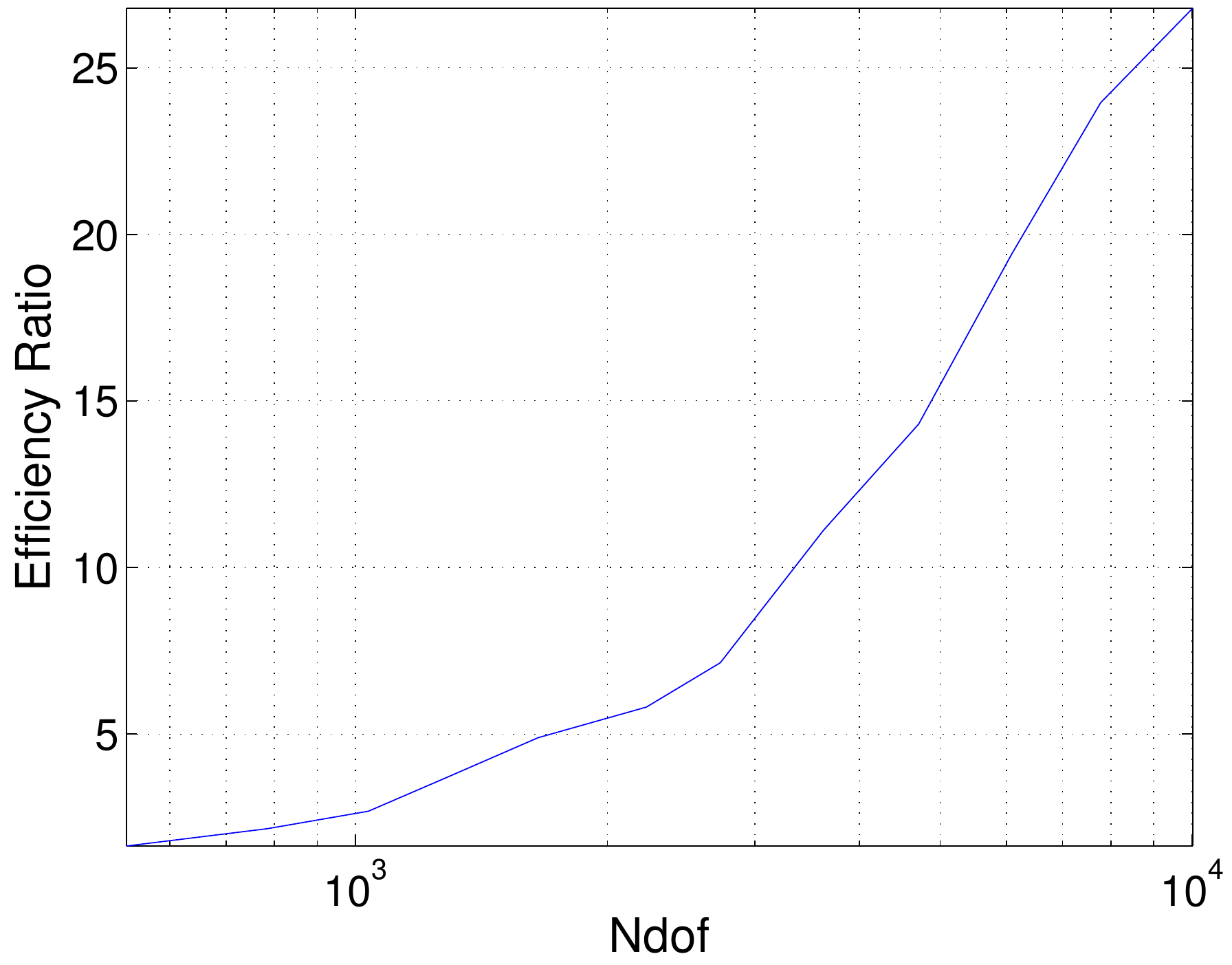}}\\%&
%\resizebox{0.3\textwidth}{!}{\includegraphics{./H1-L-s-7}}\\
\resizebox{0.4\textwidth}{!}{\includegraphics{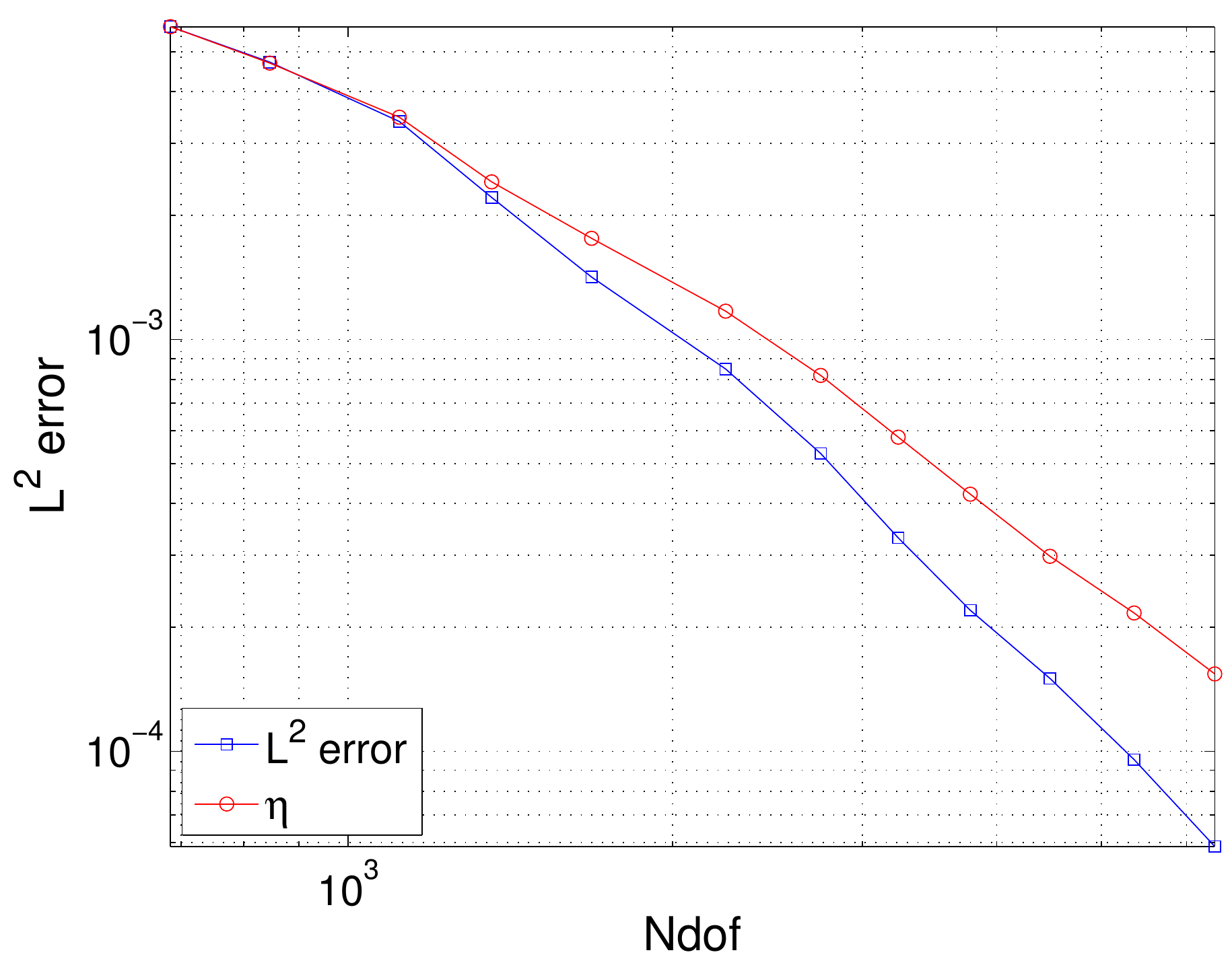}}&
\resizebox{0.4\textwidth}{!}{\includegraphics{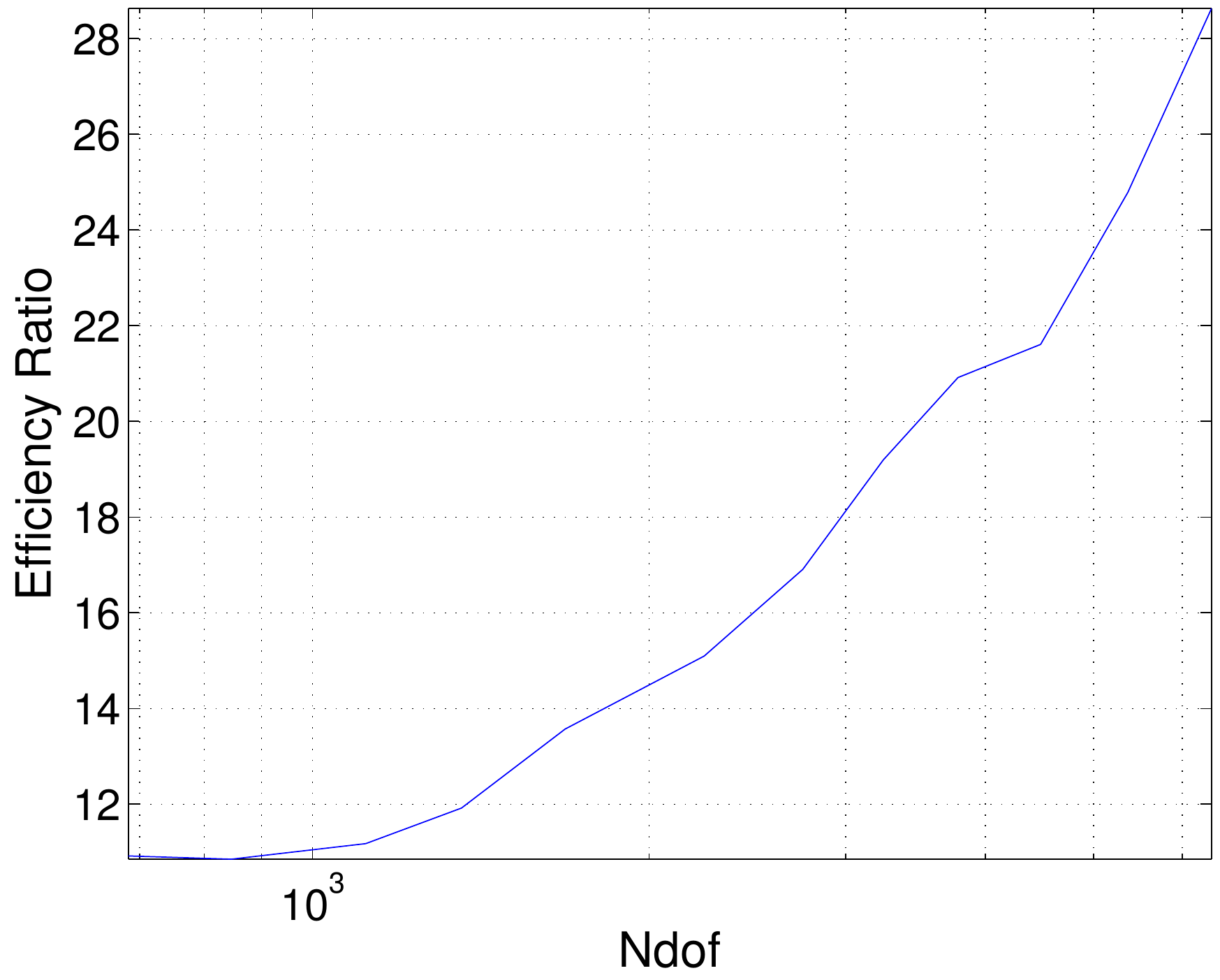}}%&
%\resizebox{0.3\textwidth}{!}{\includegraphics{./H1-L-s-9}}
\end{tabular}
\caption{Results for the smooth Bessel function solution on the L-shaped domain using $s=1/6$.  The top row is for  $p_K=5$, the middle for $p_K=7$ and the bottom for $p_K=9$.  The left column shows the indicator (normalized to the actual error
at the start) and relative $L^2$ error as a function of the number of degrees of freedom.  The right column measures the efficiency of the indicator and shows
the ratio of the true error in the $L^2$norm to the residual.  Ideally this curve should be flat (at least for a well resolved solution).  }%The right column shows results for the $H^1$ norm.}
\label{L_smooth}
\end{center}
\end{figure}

We now consider a physically relevant solution with an appropriate singularity at the reentrant corner.  
We choose  the exact solution of 
$(\ref{eq:helmholtz})$ given by 
$$u(\textbf{x}) = J_{\xi}(kr)\sin(\xi\theta)$$
for $\xi = 2/3$.  In this case, near $r=0$, $u\approx C r^{2/3}\sin(2\theta/3)$ so $u\in H^{5/3-\epsilon}(\Omega)$ for any $\epsilon>0$ and we again take $s=1/6$ in the estimators.  The boundary conditions (only Dirichlet in our numerical experiments) are determined from this exact solution.

The  computed solution and the corresponding final mesh after 12 refinement steps is shown in
Fig. ~\ref{fig: singular_bessel_soln} (starting from the mesh in Fig.~\ref{init_mesh_L}).  Clearly the algorithm has refined the mesh near the reentrant corner as expected. 
	
\begin{figure}
\centering
\subfigure[Computed Solution]{%
\includegraphics[scale=0.35]{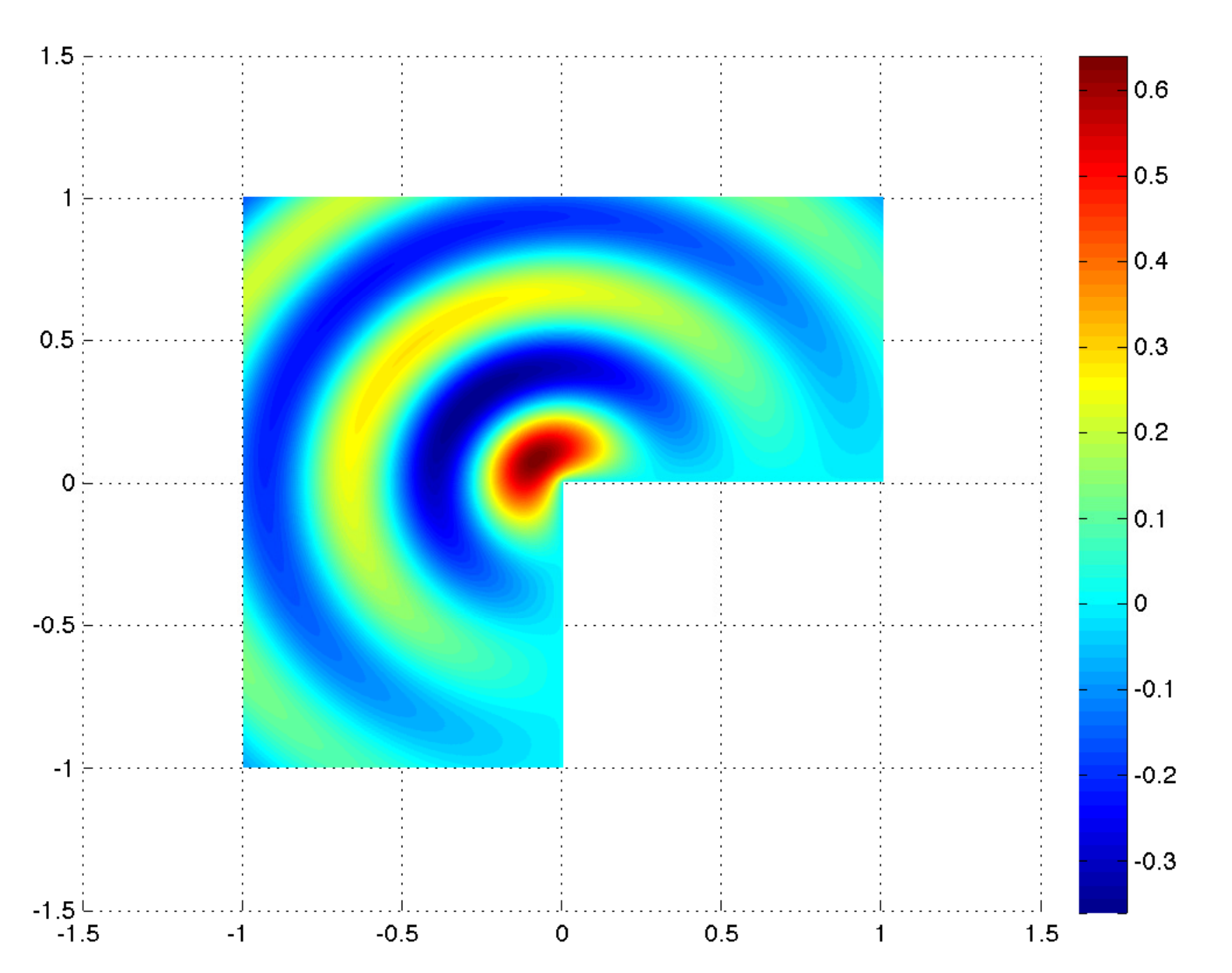}}
\quad 
\subfigure[Refined Mesh]{%
\includegraphics[scale=0.35]{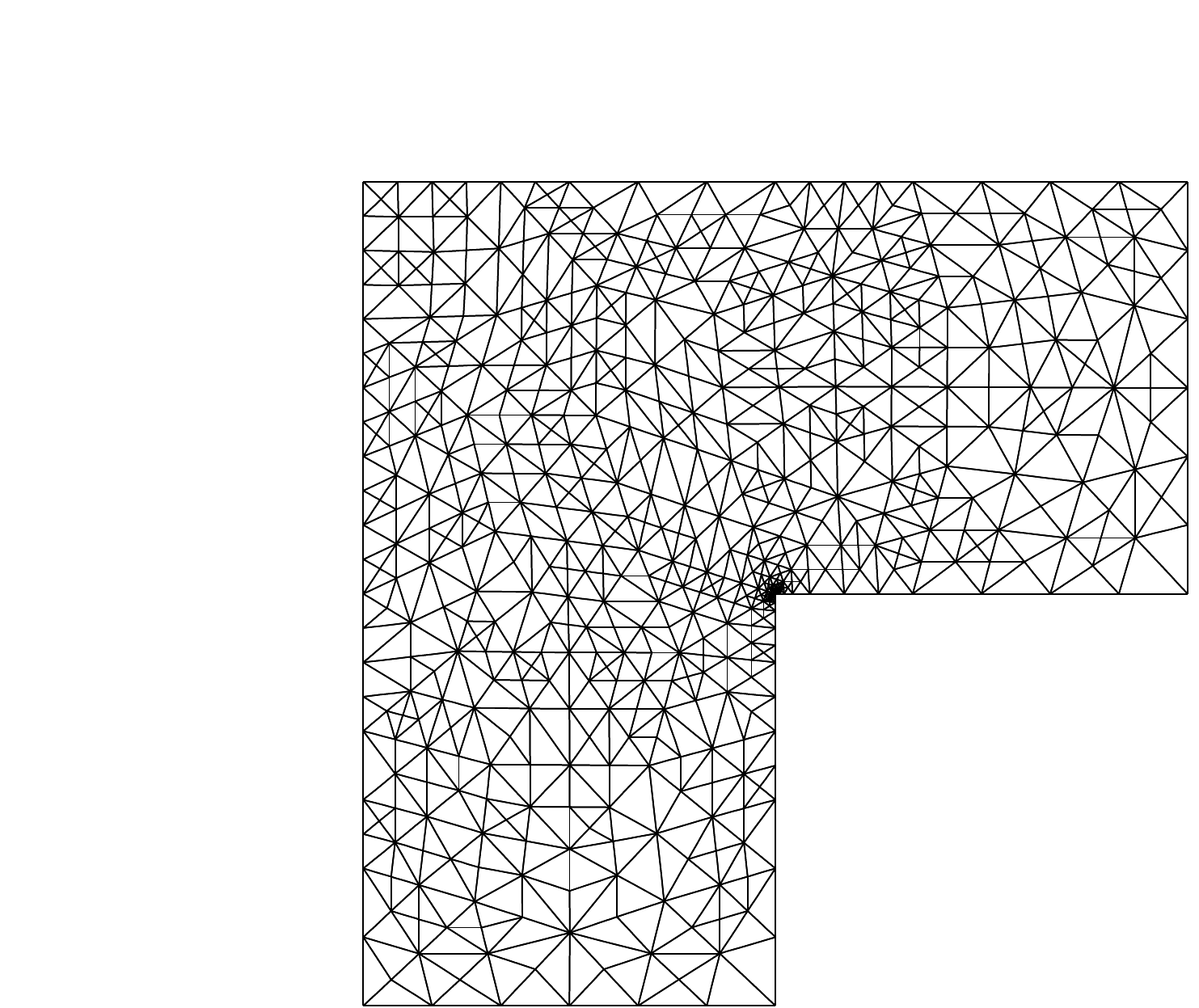}}

\caption{The numerical solution and final mesh  after 12 iterations when $\xi=2/3$ (singular solution) and $\kappa=12$ using $p_K=7$ plane waves per element.  At the resolution of the graphics, the exact and computed solution are indistinguishable.}
\label{fig: singular_bessel_soln}
\end{figure}

Results for $p_K=3$ and $p_K=4$ are shown in Fig.~\ref{peq3}.  In this case we start with a mesh obtained by two steps
of uniform refinement of the mesh in Fig.~\ref{init_mesh_L}.  This is because for low $p_K$ the original initial mesh is too coarse to produce any approximation of the solution.  If we start with the mesh in Fig.~\ref{init_mesh_L} the algorithm does correctly refine uniformly but many adaptive steps are needed before accuracy starts to improve.  The results show that our indicator works even when $p_K=3$ even though piecewise linear polynomials cannot be well
approximated in the sense of Lemma 5.1.
 
\begin{figure}
\begin{center}
\begin{tabular}{ccc}
%\resizebox{0.3\textwidth}{!}{\includegraphics{./L2-L-s-3}}&
%\resizebox{0.3\textwidth}{!}{\includegraphics{./eff-L-s-3}}&
%\resizebox{0.3\textwidth}{!}{\includegraphics{./H1-L-s-3}}\\
\resizebox{0.4\textwidth}{!}{\includegraphics{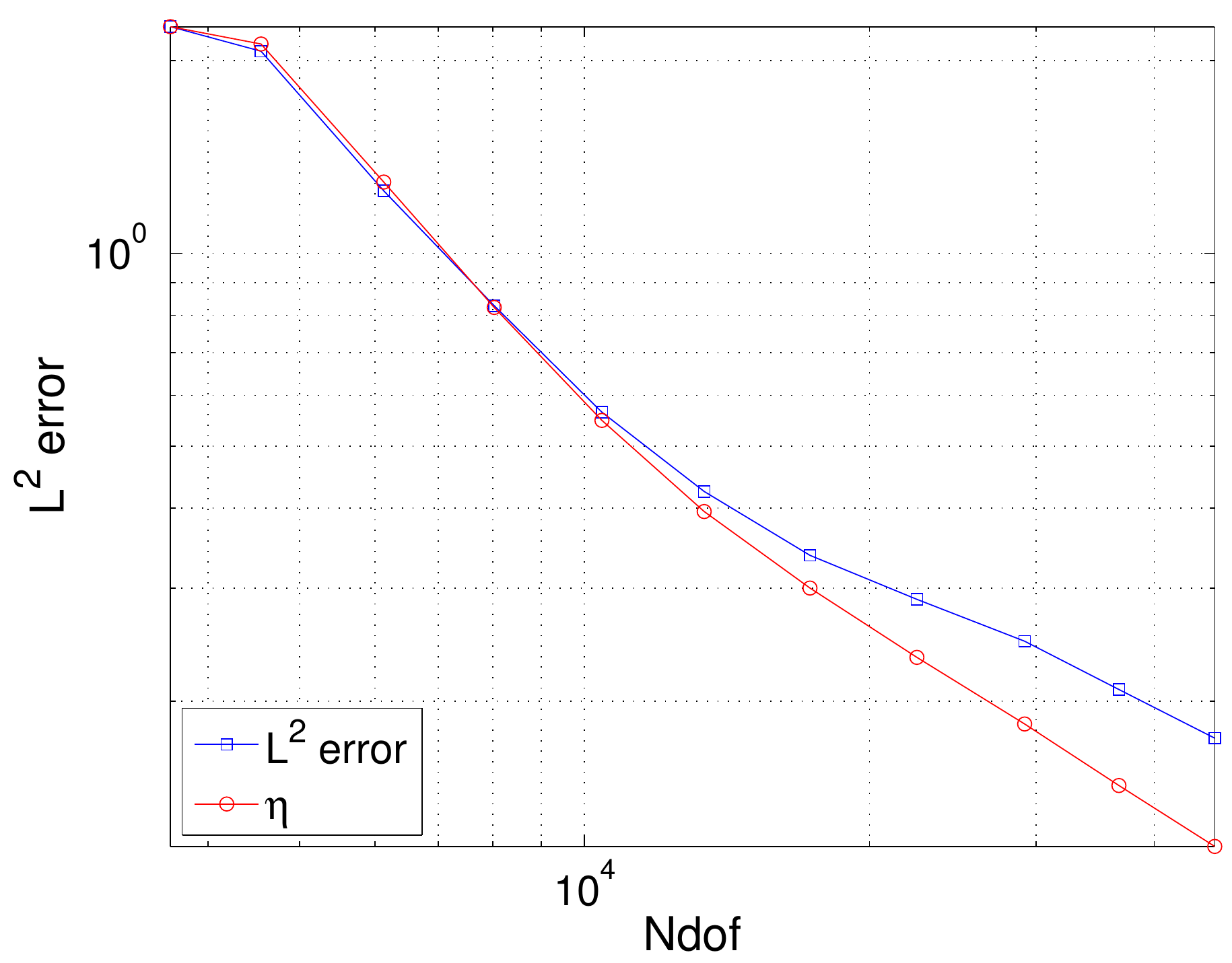}}&
\resizebox{0.4\textwidth}{!}{\includegraphics{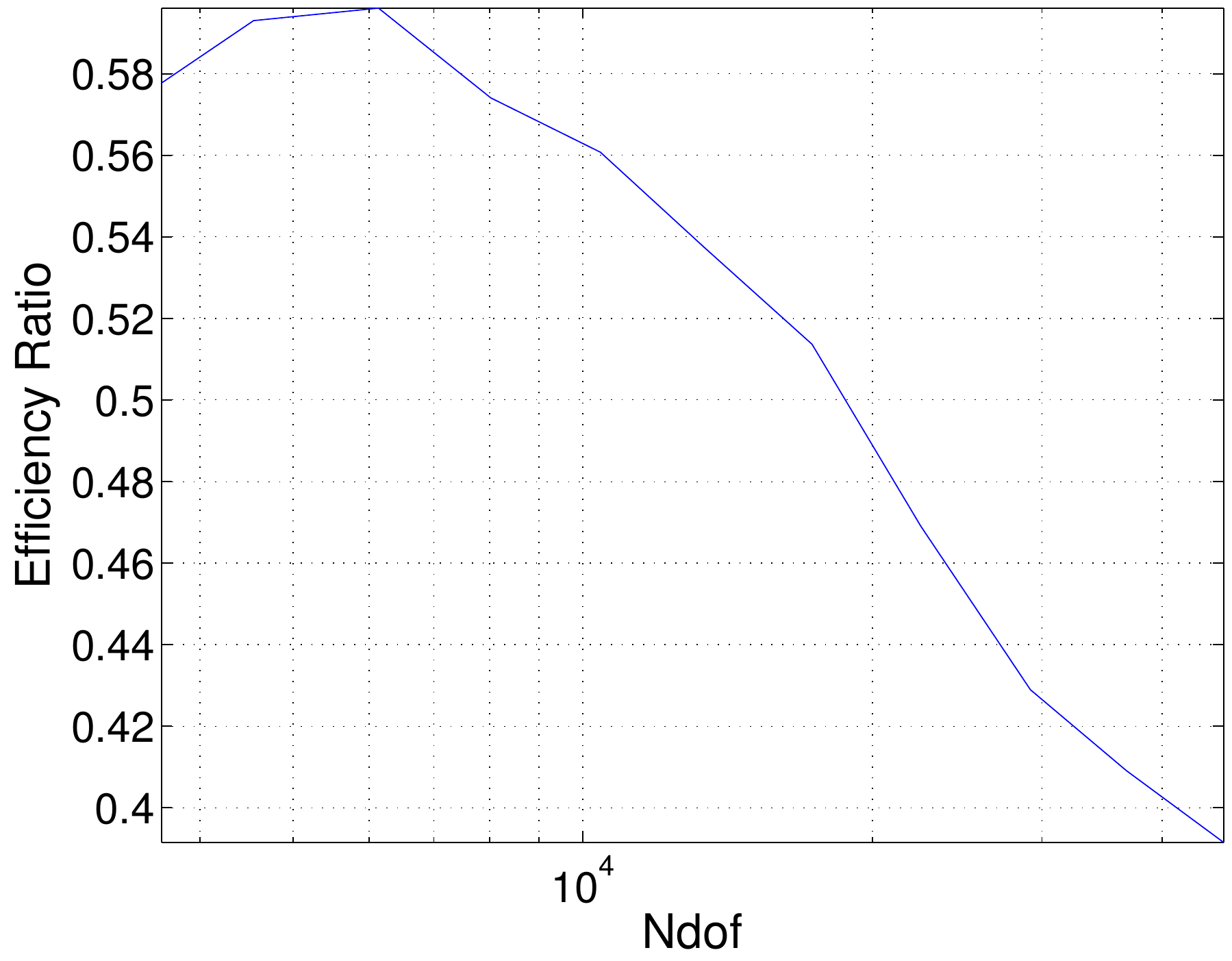}}\\%&
%\resizebox{0.4\textwidth}{!}{\includegraphics{./H1-L-ns-3}}\\
\resizebox{0.4\textwidth}{!}{\includegraphics{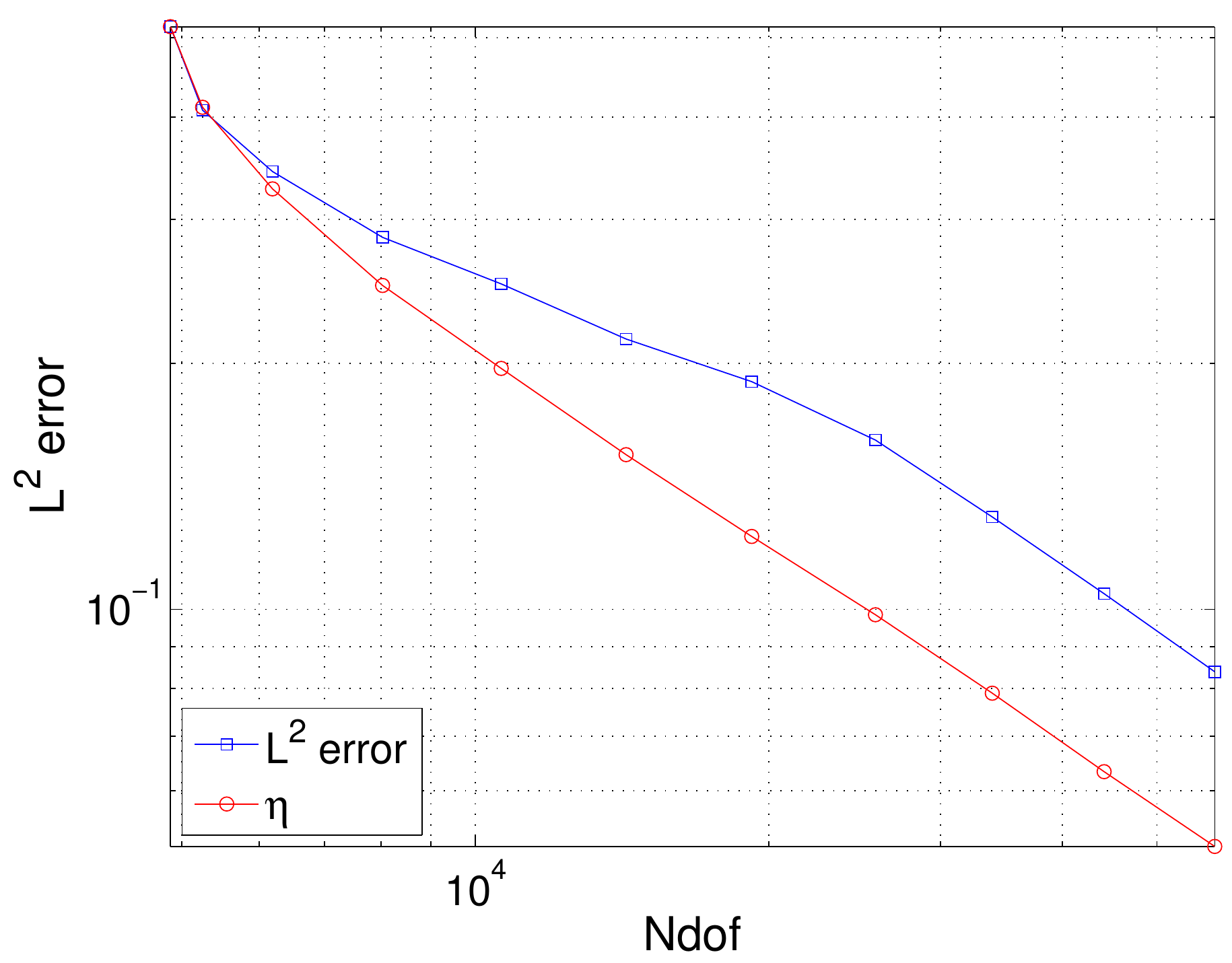}}&
\resizebox{0.4\textwidth}{!}{\includegraphics{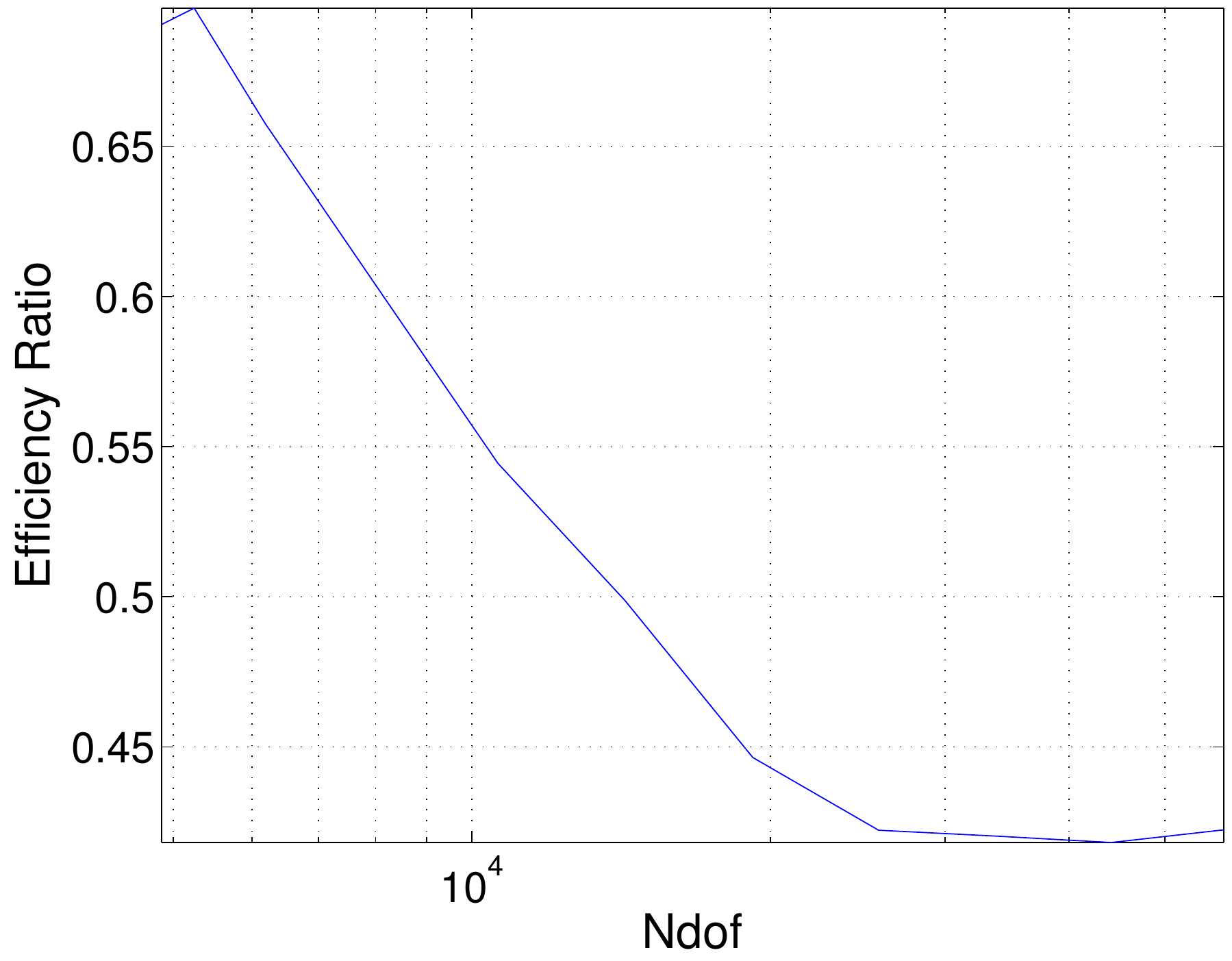}}%&
%\resizebox{0.3\textwidth}{!}{\includegraphics{./H1-L-ns-4}}
%\resizebox{0.3\textwidth}{!}{\includegraphics{./L2-ir-3}}&
%\resizebox{0.3\textwidth}{!}{\includegraphics{./eff-ir-3}}&
%\resizebox{0.3\textwidth}{!}{\includegraphics{./H1-ir-3}}
\end{tabular}
\end{center}
\caption{Results for the singular solution (Bessel function with $\xi=2/3$) using $p_K=3$ (top row) and $p_K=4$ (bottom row) starting from two levels of refinement of the initial grid in  Fig.~\ref{init_mesh_L}.  This figure has the same columns as Fig.~\ref{L_smooth}.  As expected there us little difference between the error attained by the two methods (the a priori error estimates are the same order for $p_K=3$ and $p_K=4$), but the residual estimator behaves better in the case when $p_K=4$ in that the efficiency curve flattens out.}
\label{peq3}
\end{figure}

Results for $p_K=5,7,9$ are shown in Fig.~\ref{L_smooth} starting with the mesh in Fig.~\ref{init_mesh_L} and using $s=1/6$ in our estimator.    Convergence is slower than for the smooth solution, but the efficiency of the indicators is improved although it does deteriorate as the mesh is refined.
\begin{figure}
\begin{center}
\begin{tabular}{ccc}
\resizebox{0.4\textwidth}{!}{\includegraphics{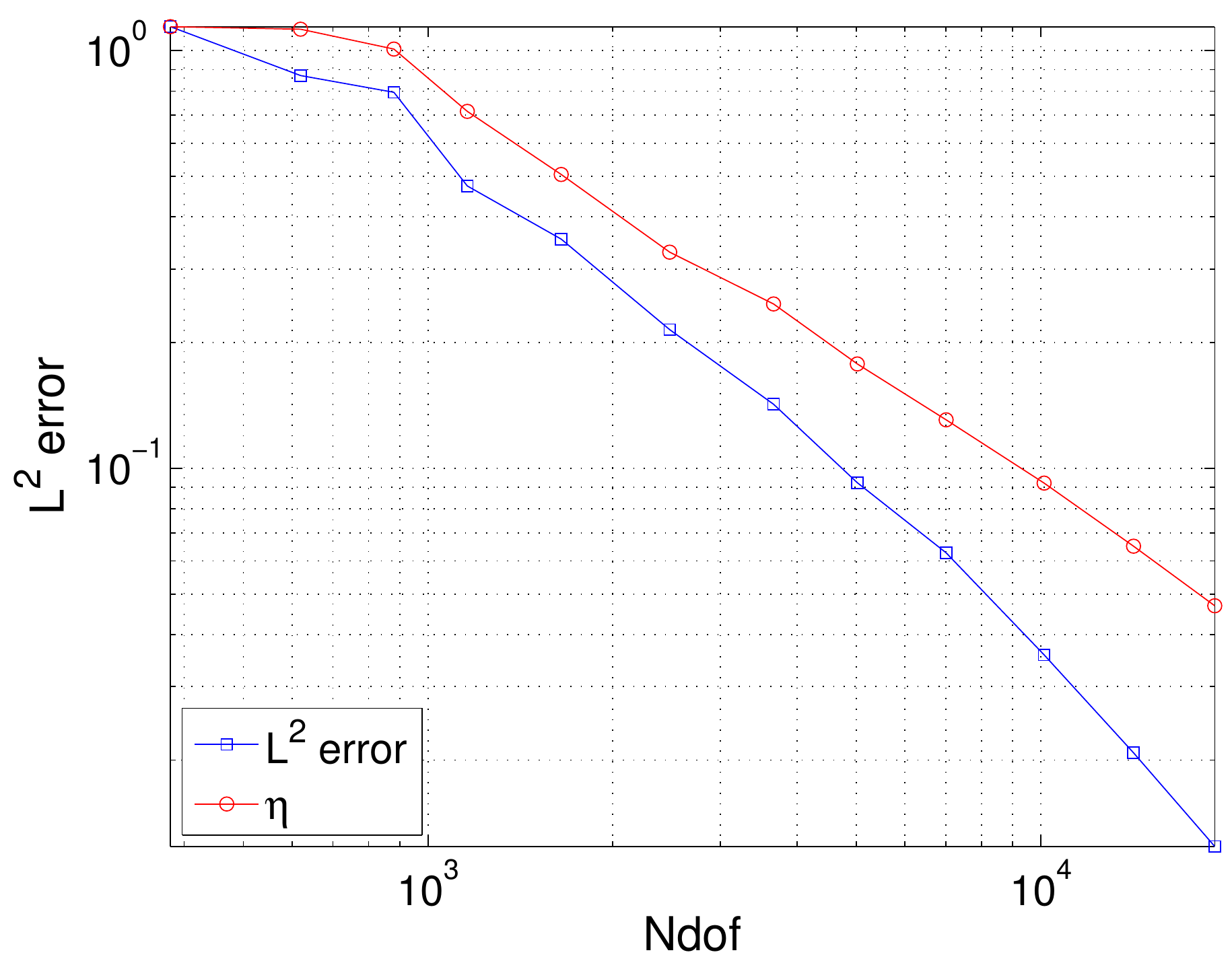}}&
\resizebox{0.4\textwidth}{!}{\includegraphics{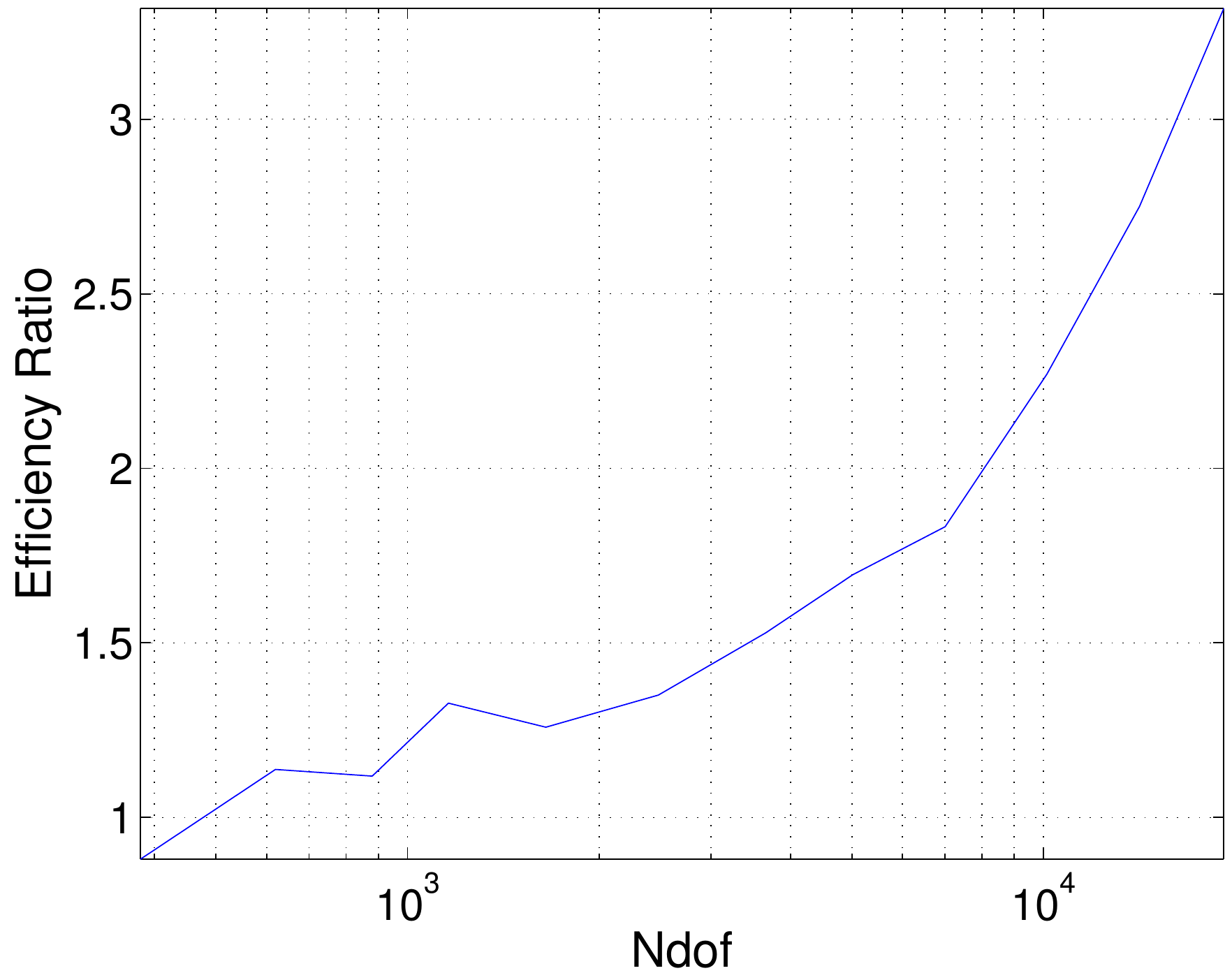}}\\%&
%\resizebox{0.3\textwidth}{!}{\includegraphics{./H1-L-ns-5}}\\
\resizebox{0.4\textwidth}{!}{\includegraphics{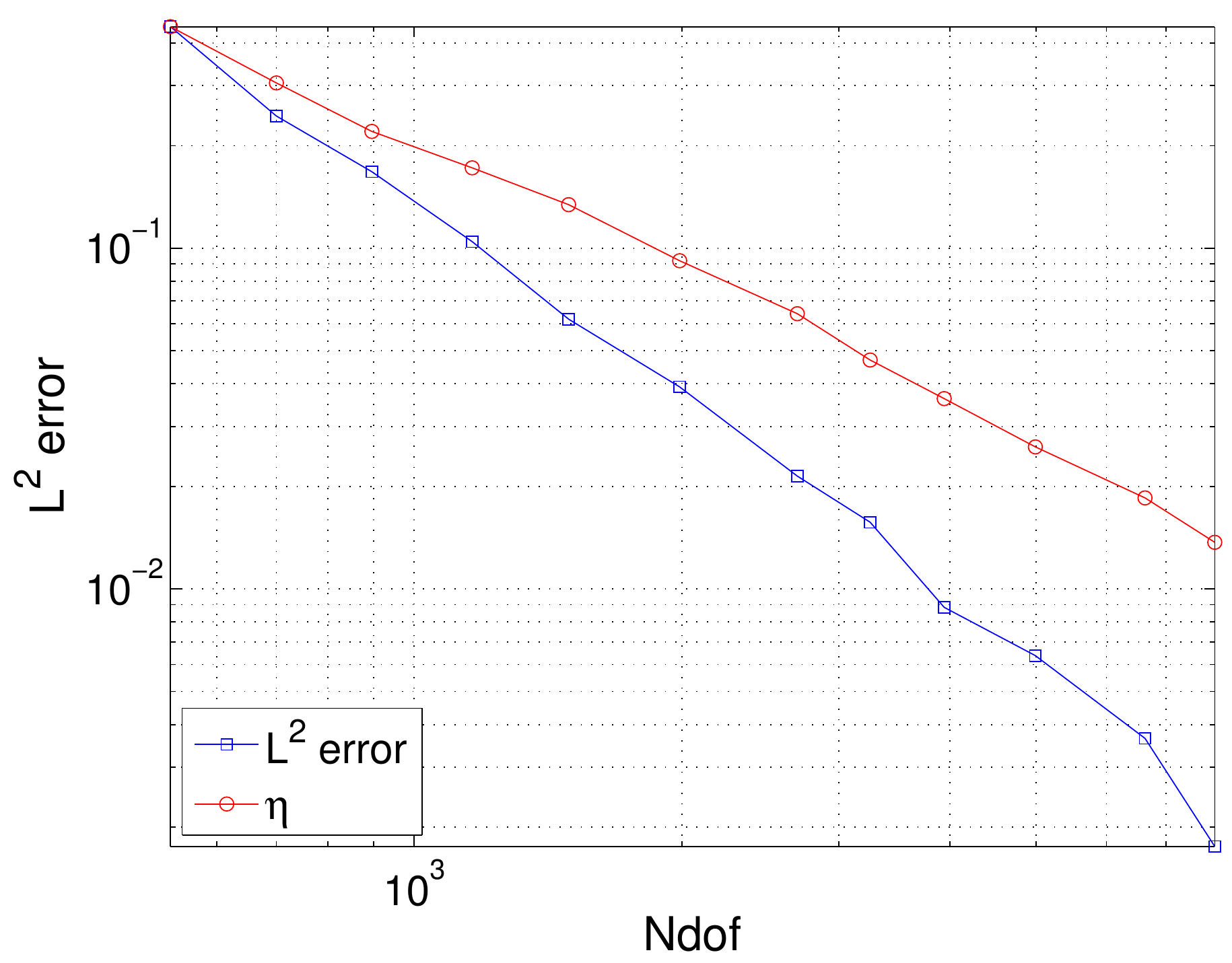}}&
\resizebox{0.4\textwidth}{!}{\includegraphics{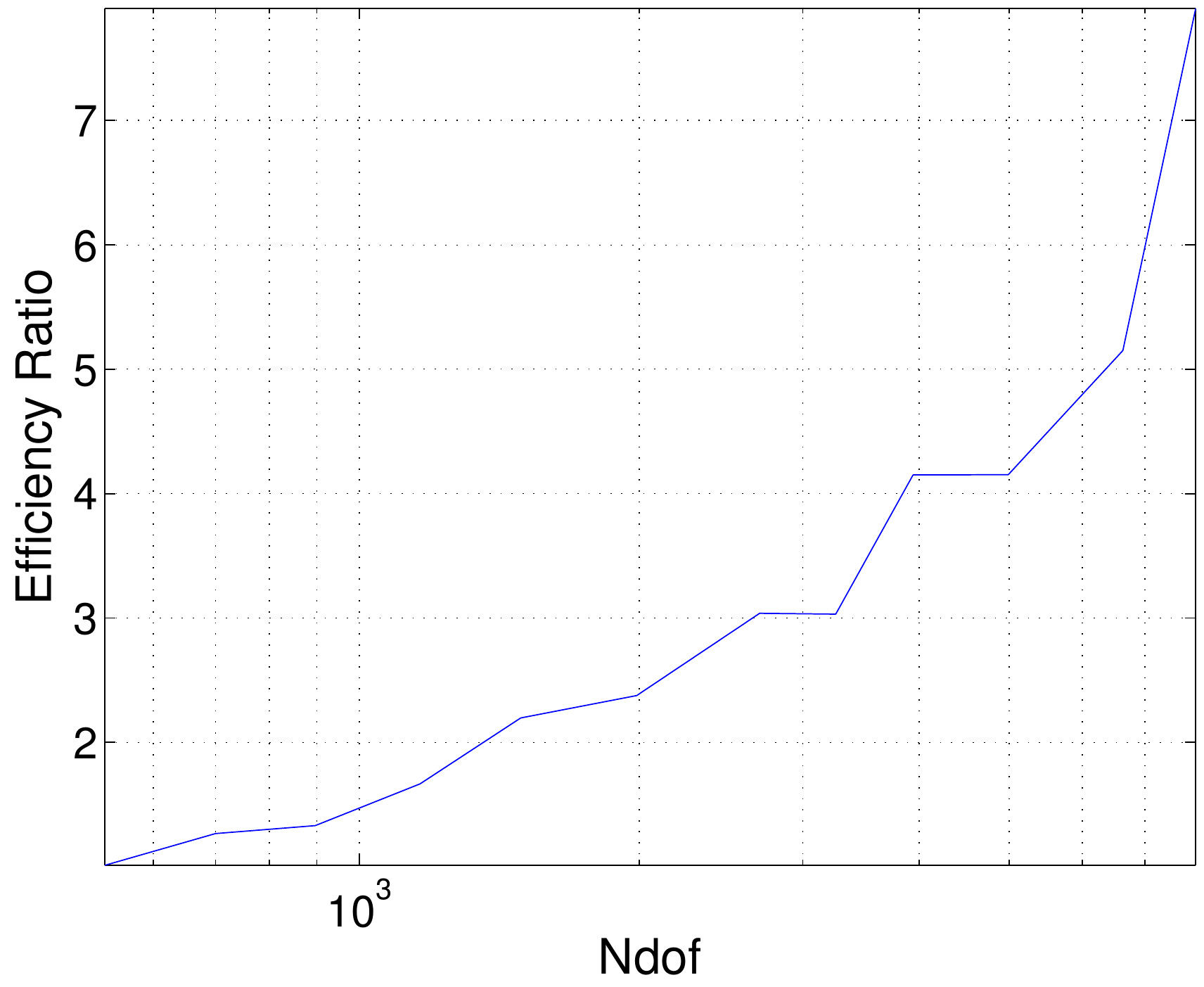}}\\%&
%\resizebox{0.3\textwidth}{!}{\includegraphics{./H1-L-ns-7}}\\
\resizebox{0.4\textwidth}{!}{\includegraphics{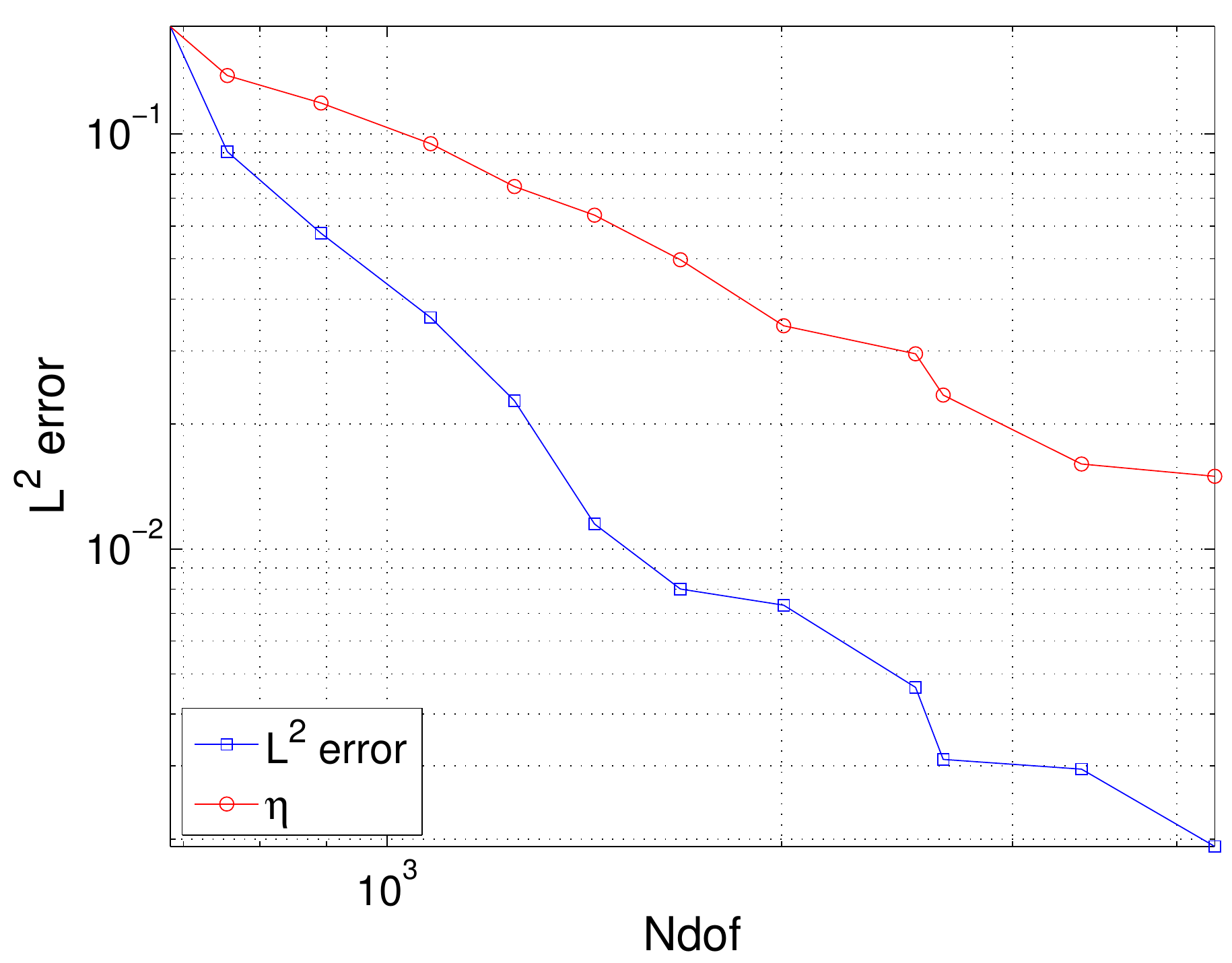}}&
\resizebox{0.4\textwidth}{!}{\includegraphics{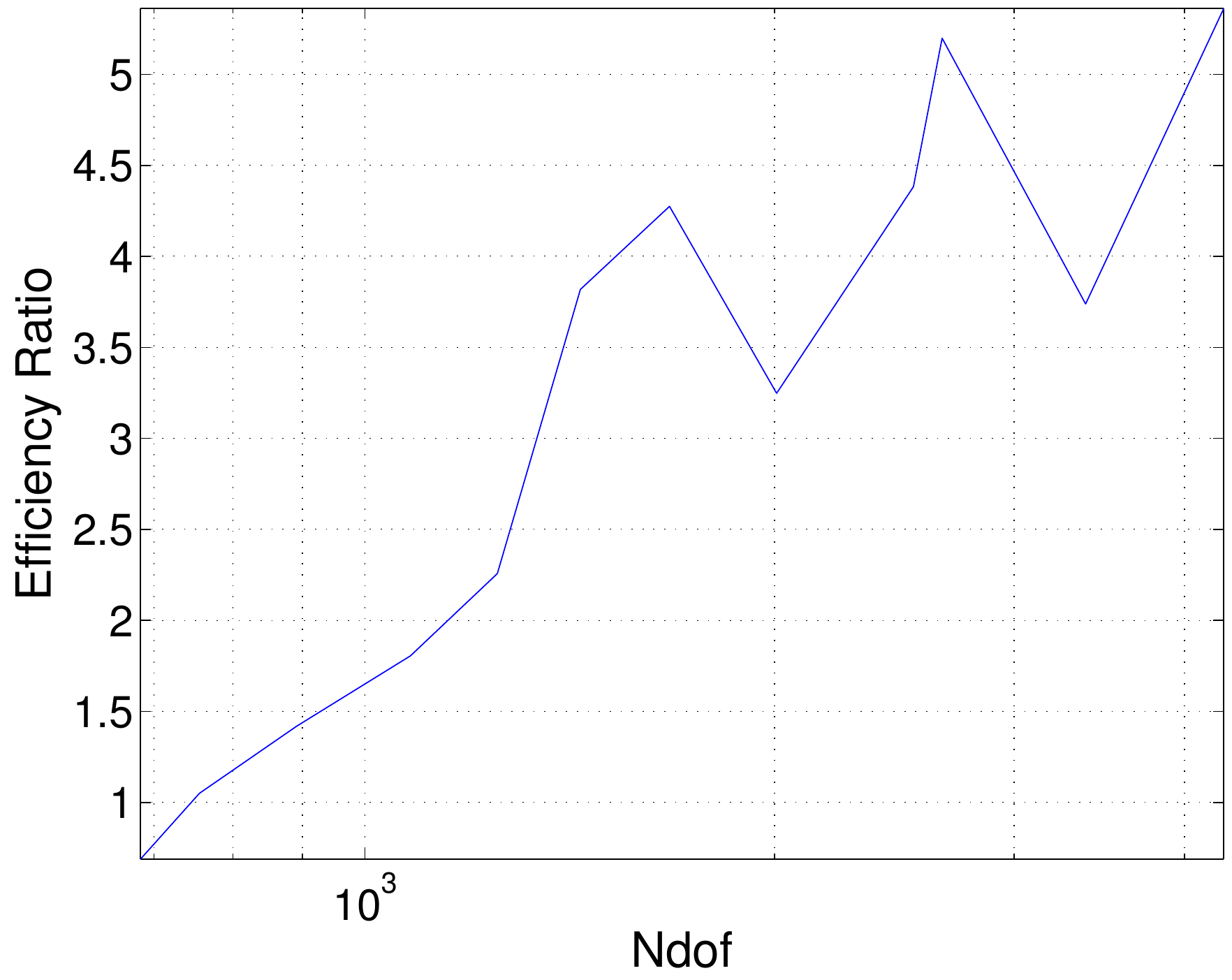}}%&
%\resizebox{0.3\textwidth}{!}{\includegraphics{./H1-L-ns-9}}
\end{tabular}
\end{center}
\caption{Results for the singular solution (Bessel function with $\xi=2/3$) using $p_K=5$ (top row), $p_K=7$ (middle row) and $p_K=9$ (bottom row). We start from the initial grid in  Fig.~\ref{init_mesh_L}.This figure has the same layout as Fig.~\ref{L_smooth}. }.
\label{L_nonsmooth}
\end{figure}

For the Helmholtz equation, besides standard elliptic corner singularities mentioned above, adaptivity may also help deal with boundary layers that can arise at interfaces between regions with different refractive indices. 
We now consider adaptivity for the transmission and reflection of a plane wave across a fluid-fluid interface on a square domain $\Omega:=(-1,1)^2$ with two different refractive indices. The interface is located at $y=0$. The problem now is to find $u\in H^1(\Omega)$ such that
\begin{equation}
\Delta u+k^2\epsilon_ru=0\mbox{ in }\Omega\label{star}
\end{equation}
subject to appropriate boundary conditions where 
\[
\epsilon_r(x,y)=\left\{\begin{array}{cc} n_1^2&\mbox{ if }y>0,\\
 n_2^2&\mbox{ if }y<0.\end{array}\right.
\]
We choose $ n_1=1$ and $n_2=4$.  Then it is easy to show that for any angle $0\leq \theta_i<\pi/2$
and $\mathbf{d}=(\cos(\theta_i),\sin(\theta_i))$ the following is 
a solution of (\ref{star})
\[
u(x,y)=\left\{\begin{array}{cc}T\exp(i(K_1x+K_2y))&\mbox{ if }y>0,\\
\exp(i\kappa n_1(d_1x+d_2y))+R\exp(i\kappa n_1(d_1x-d_2y))&\mbox{ if }y<0.\end{array}\right.
\]
where $K_1=\kappa n_1 d_1$ and $K_2=\kappa\sqrt(n_2^2-n_1^2d_1^2)$ and
\begin{eqnarray*}
R&=&-(K_2-\kappa n_1 d_2)/(K_2+\kappa n_1 d_2),\\
T&=&1+R.
\end{eqnarray*}
If $n_2^2-n_1^2d_1^2<0$ (i.e. if $n_2>n_1$ and $d_1$ is large enough) then $K_2$ is imaginary (we choose a positive imaginary part) and the solution for $y>0$ decays exponentially into the upper half plane (physically this is said
to be total internal reflection since the wave above the interface is vanishingly low amplitude far from the interface).  If $d_1$ is small enough (i.e. close to normal incidence) the wave is refracted at the interface  and a 
traveling wave is seen above and below the interface.  Thus there is a critical angle $\theta_i=\theta_{crit}$ such that
for $\theta_i>\theta_{crit}$ the wave is refracted, and for $\theta_i<\theta_{crit}$ we have internal reflection.  This is shown in Fig.~\ref{fig: total_internal_soln}.  The case of internal reflection is challenging for a plane wave based method since
evanescent (or exponentially decaying) waves are not in the basis.  We therefore investigate if our residual estimators can appropriately refine the mesh in this case (this not a problem covered by our theory).

\begin{figure}[ht]
\centering
\subfigure[$\theta_{inc}=29^\circ$,\;$\theta_{i}<\theta_{crit}$]{%
\includegraphics[scale=0.4]{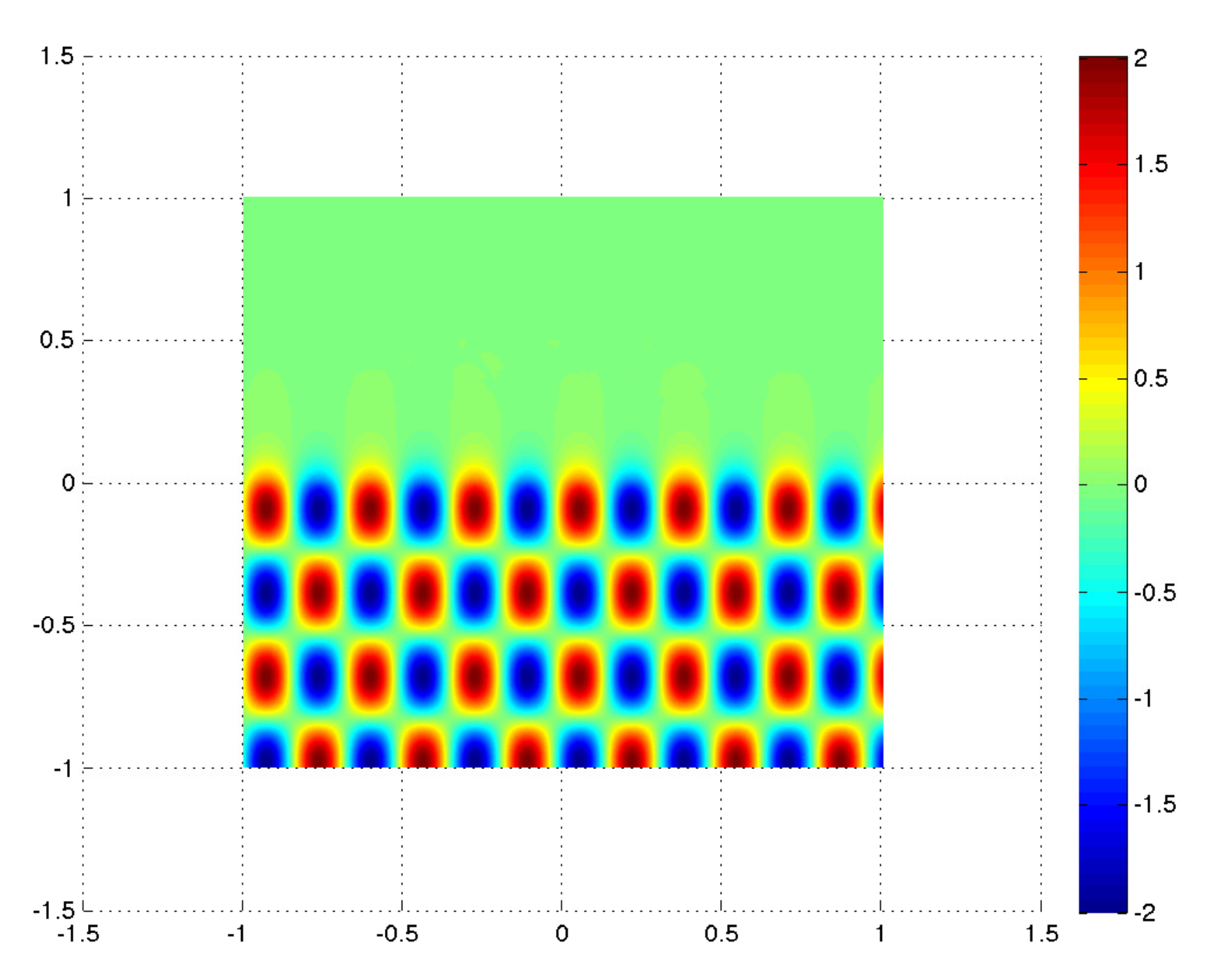}}
\quad
\subfigure[$\theta_{inc}=69^\circ$,\;$\theta_{i}>\theta_{crit}$]{%
\includegraphics[scale=0.4]{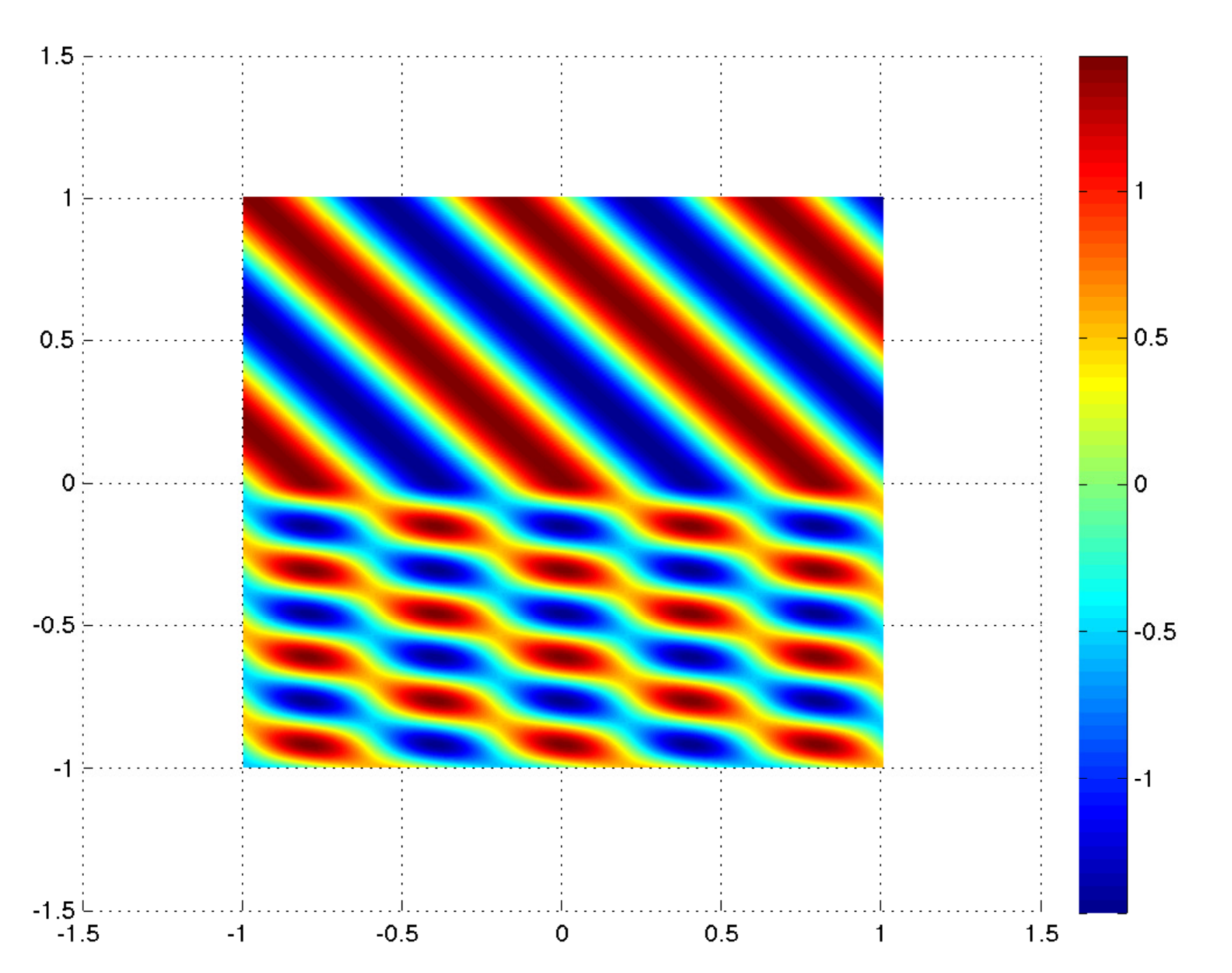}}
\caption{Numerical solutions after 12 iterations when $k=11$ and $n_1=2$,$n_2=1$, $p_K=7$ plane waves per element. When $\theta_{i}<\theta_{crit}$ the wave decays exponentially into the upper half of the plane as shown for $\theta_i=29^\circ$ (left panel).  When $\theta_i=69^\circ$ the wave is transmitted into the upper half of the square (right panel). }\label{fig: total_internal_soln}
\end{figure}

In particular we use Dirichlet boundary conditions derived from the exact solution (assuming $\kappa$ is not an interior 
eigenvalue for the domain) and choose the wavenumber is $\kappa=11$.    In view of the fact that the domain is convex  with a smooth interior interface we choose $s=1/2$ in the estimator.

\begin{figure}
\centering
\subfigure[Initial Mesh]{
\includegraphics[scale=0.4]{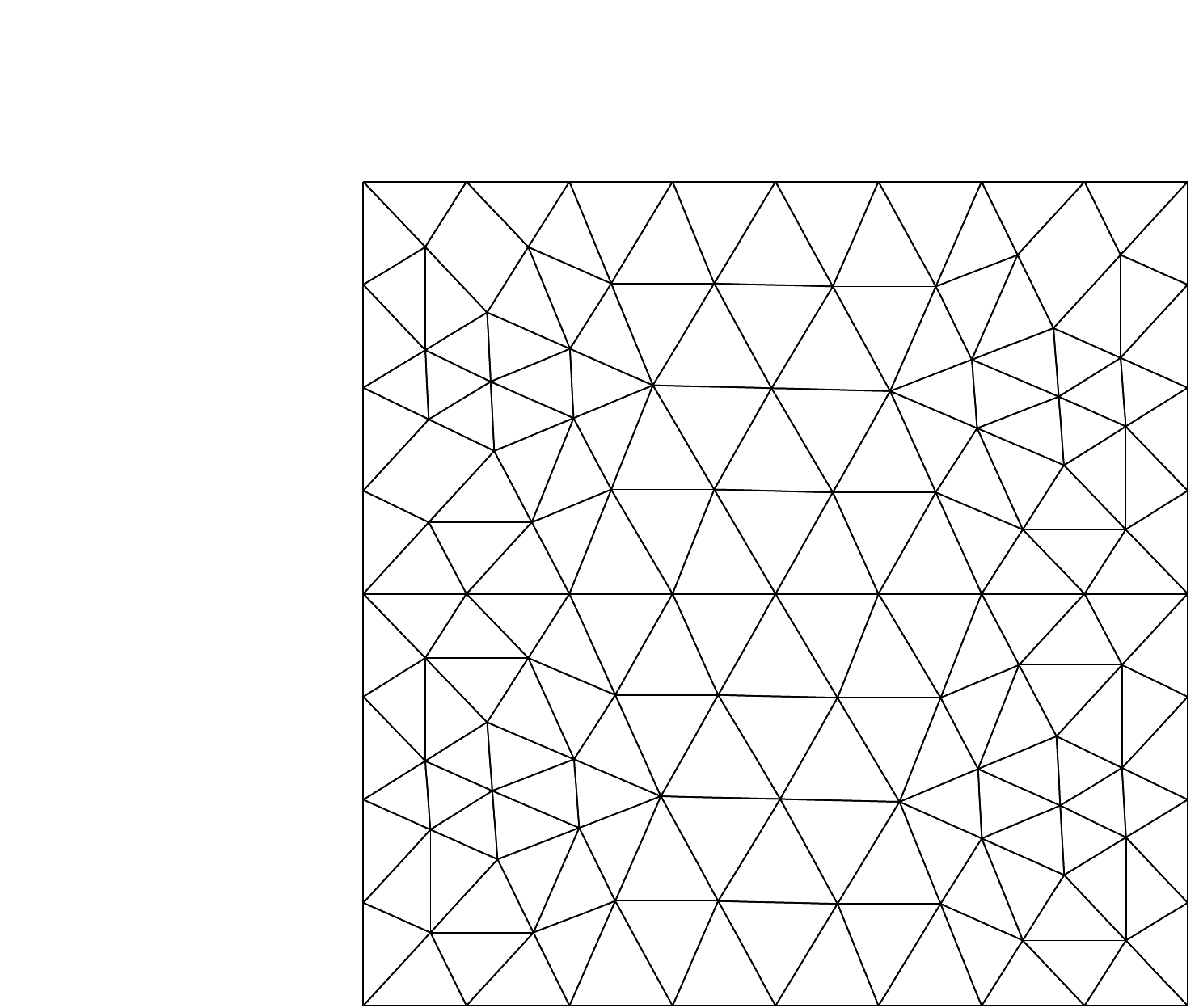}}
\subfigure[ $\theta_{inc}=69^\circ$]{
\includegraphics[scale=0.4]{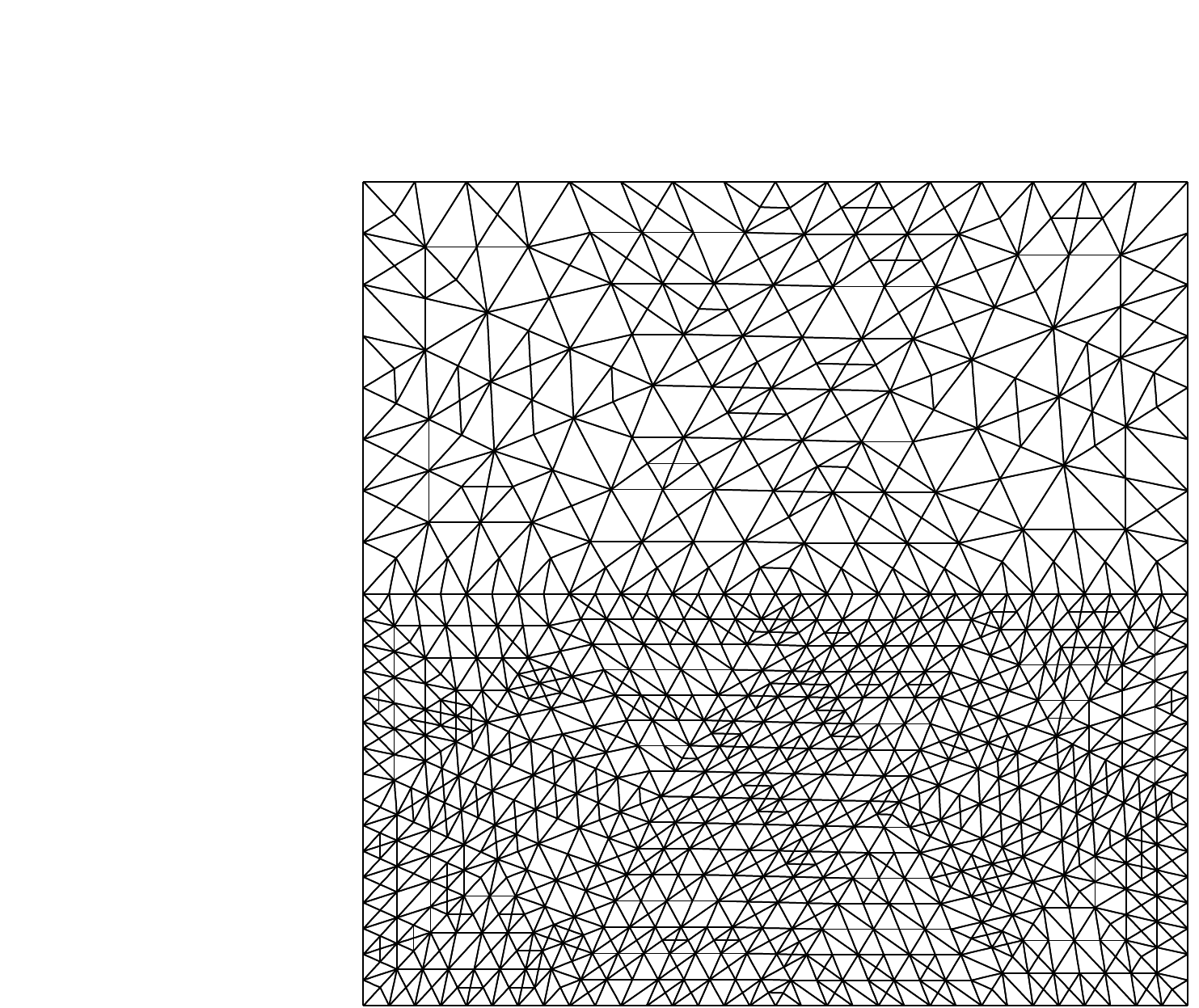}}
\subfigure[ $\theta_{inc}=29^\circ$]{
\includegraphics[scale=0.4]{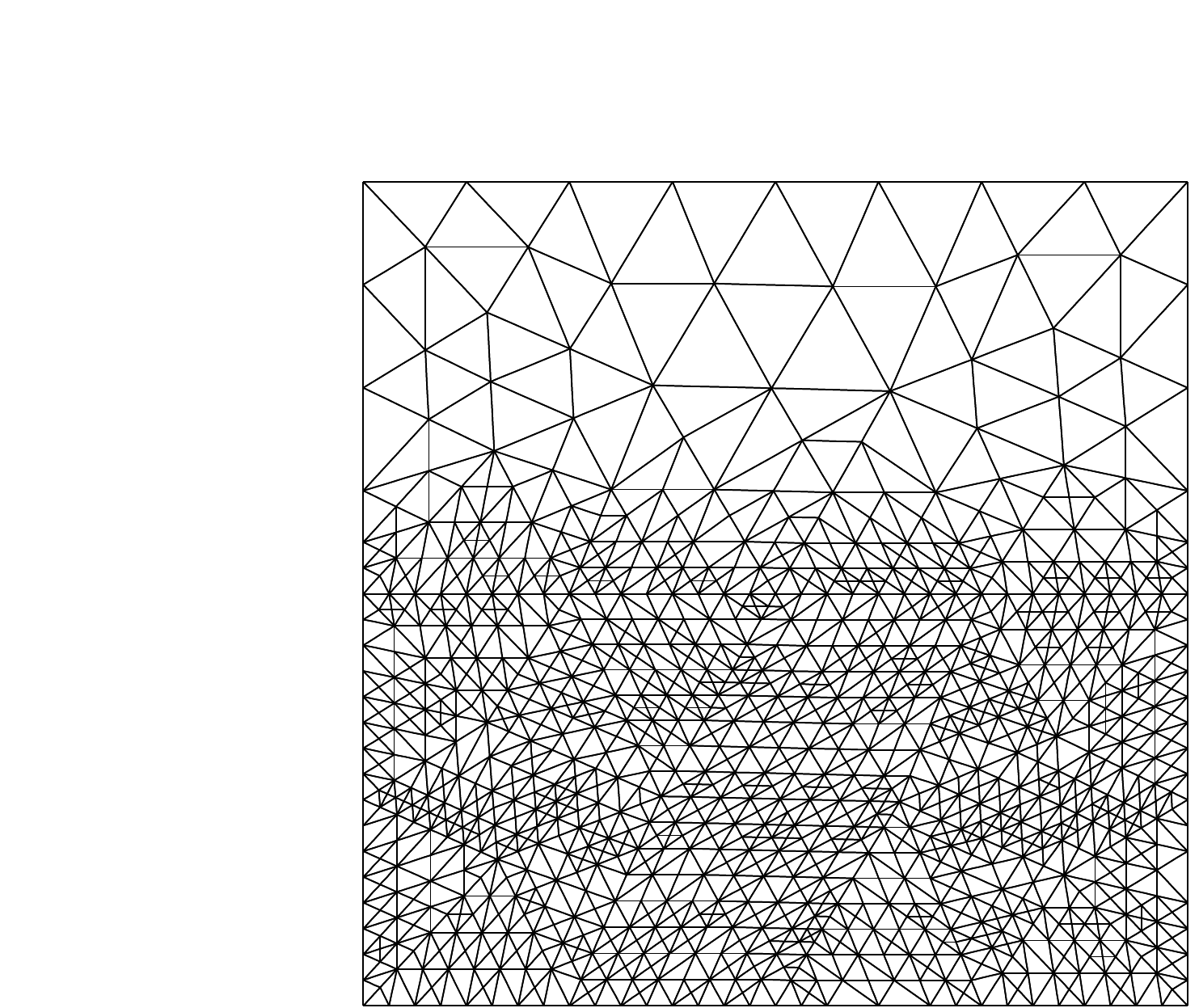}}
\caption{Initial mesh and the meshes after 12 adaptive iterations for transmission ($\theta_i=69^\circ$) and internal reflection
($\theta_i=29^\circ$).  Here $p_K=7$.}
\label{ir_mesh}
\end{figure}

Representative meshes produced by our algorithm are shown in Fig.~\ref{ir_mesh}. Starting with the initial mesh in panel a), we generate the mesh in panel b) when $\theta_i=69^\circ$.  The algorithm correctly refines the lower half square more, and there is an abrupt transition to the less refined upper half. In panel c) we show the mesh
when $\theta=29^\circ$. In this case the algorithm correctly does not refine well above the interface, but at the interface
$y=0$ some refinement occurs even for $y>0$ in order to resolve the exponentially decaying solutions there. We shall only consider the
case $\theta_i=29^\circ$  (internal reflection) from now on.

Detailed error plots when $p_K=5,7,9$ are shown in Figure ~\ref{IR}.  The results are broadly similar to our previous results.  The error is decreased by the refinement strategy, but efficiency generally deteriorates as the mesh is refined.  Again the error indicator for the higher order method, $p_K=9$, is best.
\begin{figure}
\begin{center}
\begin{tabular}{ccc}
\resizebox{0.4\textwidth}{!}{\includegraphics{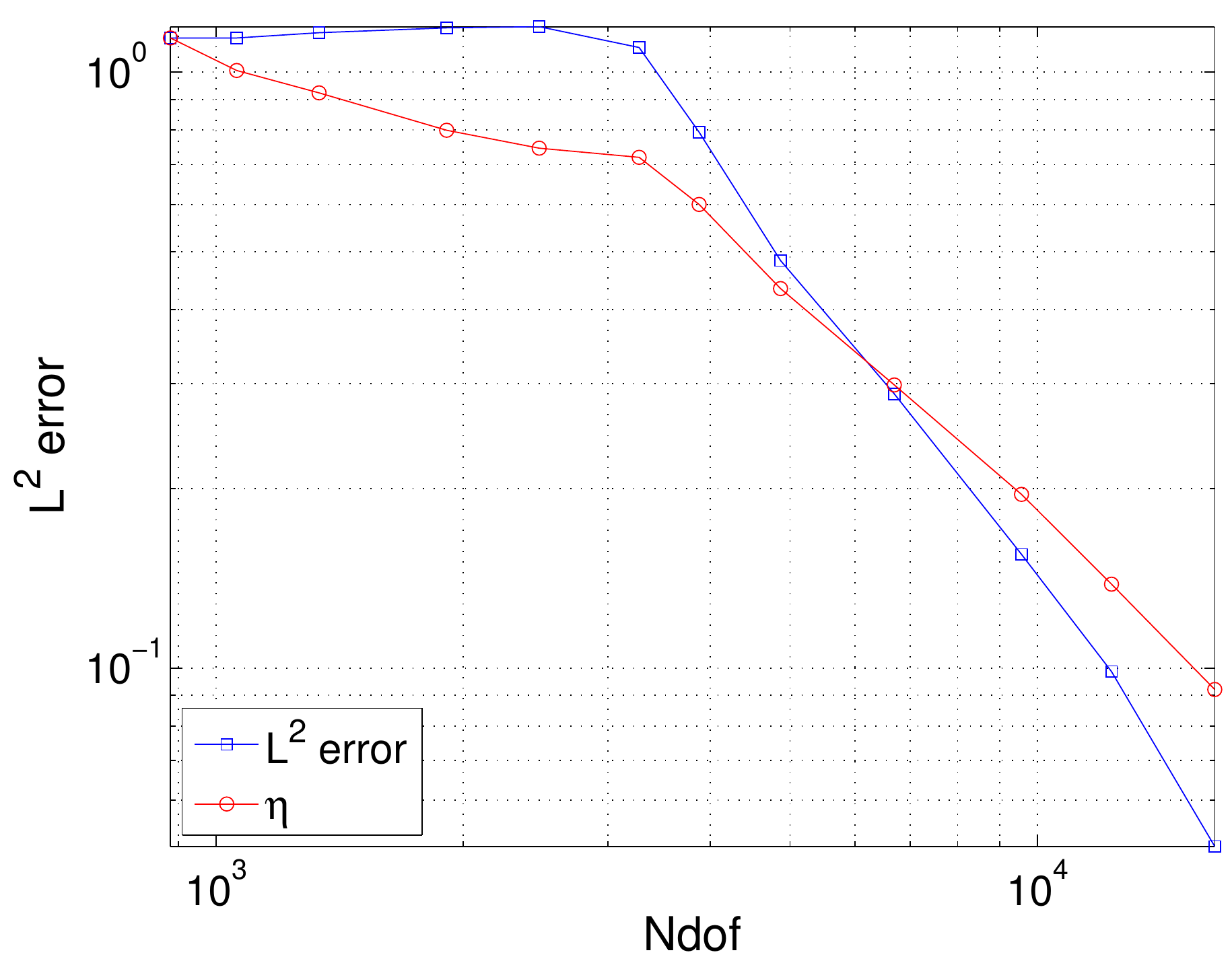}}&
\resizebox{0.4\textwidth}{!}{\includegraphics{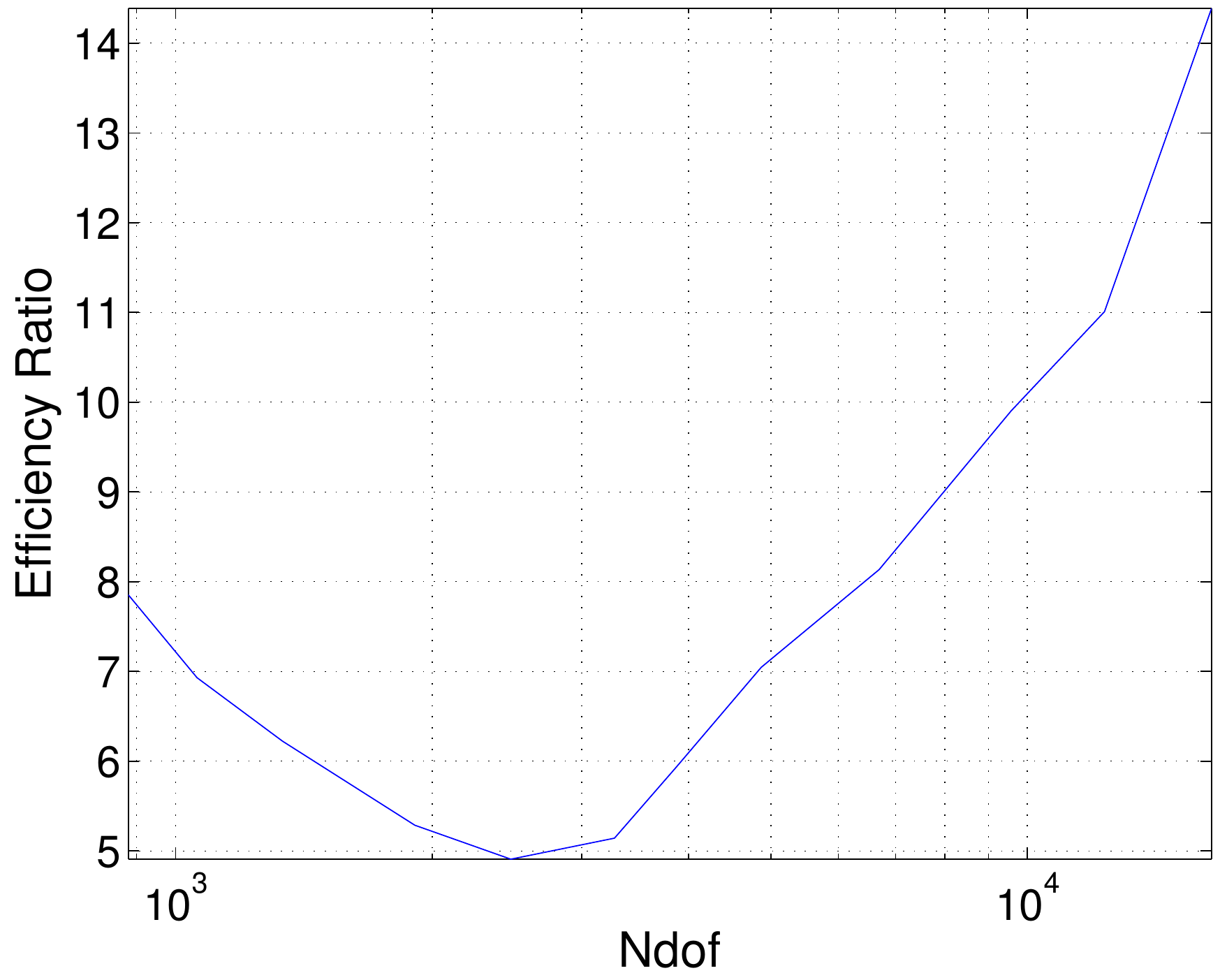}}\\%&
%\resizebox{0.3\textwidth}{!}{\includegraphics{./H1-ir-5}}\\
\resizebox{0.4\textwidth}{!}{\includegraphics{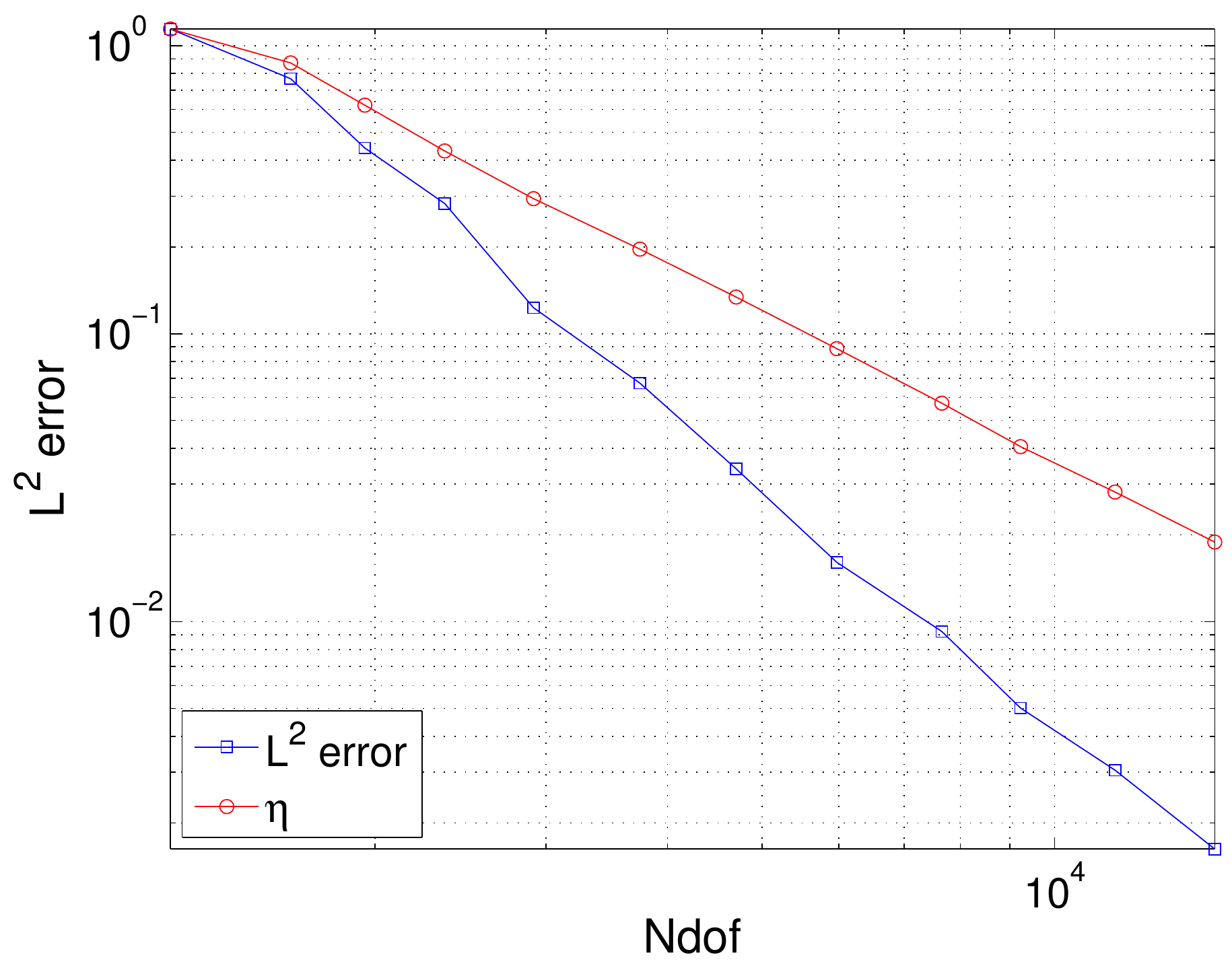}}&
\resizebox{0.4\textwidth}{!}{\includegraphics{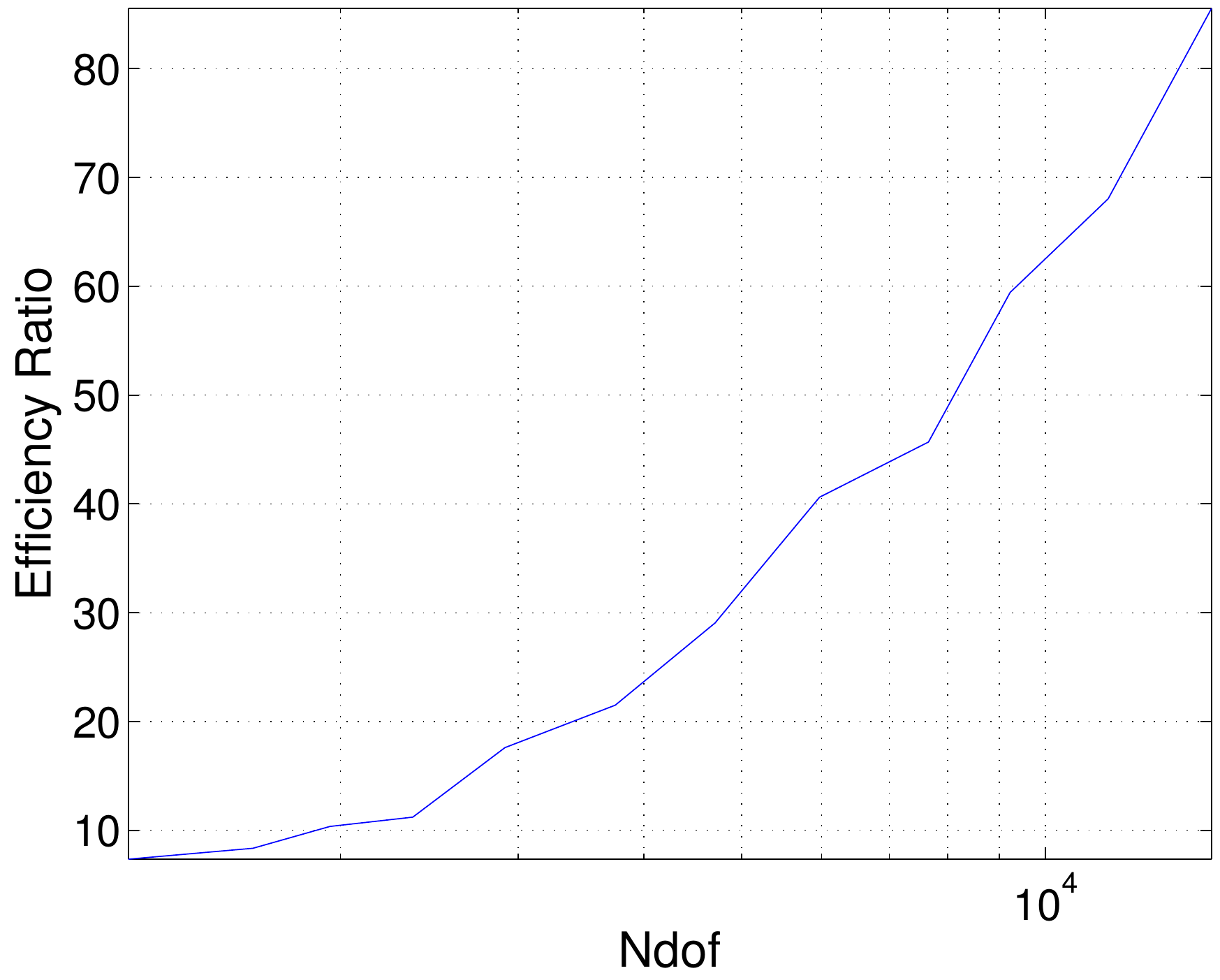}}\\%&
%\resizebox{0.3\textwidth}{!}{\includegraphics{./H1-ir-7}}\\
\resizebox{0.4\textwidth}{!}{\includegraphics{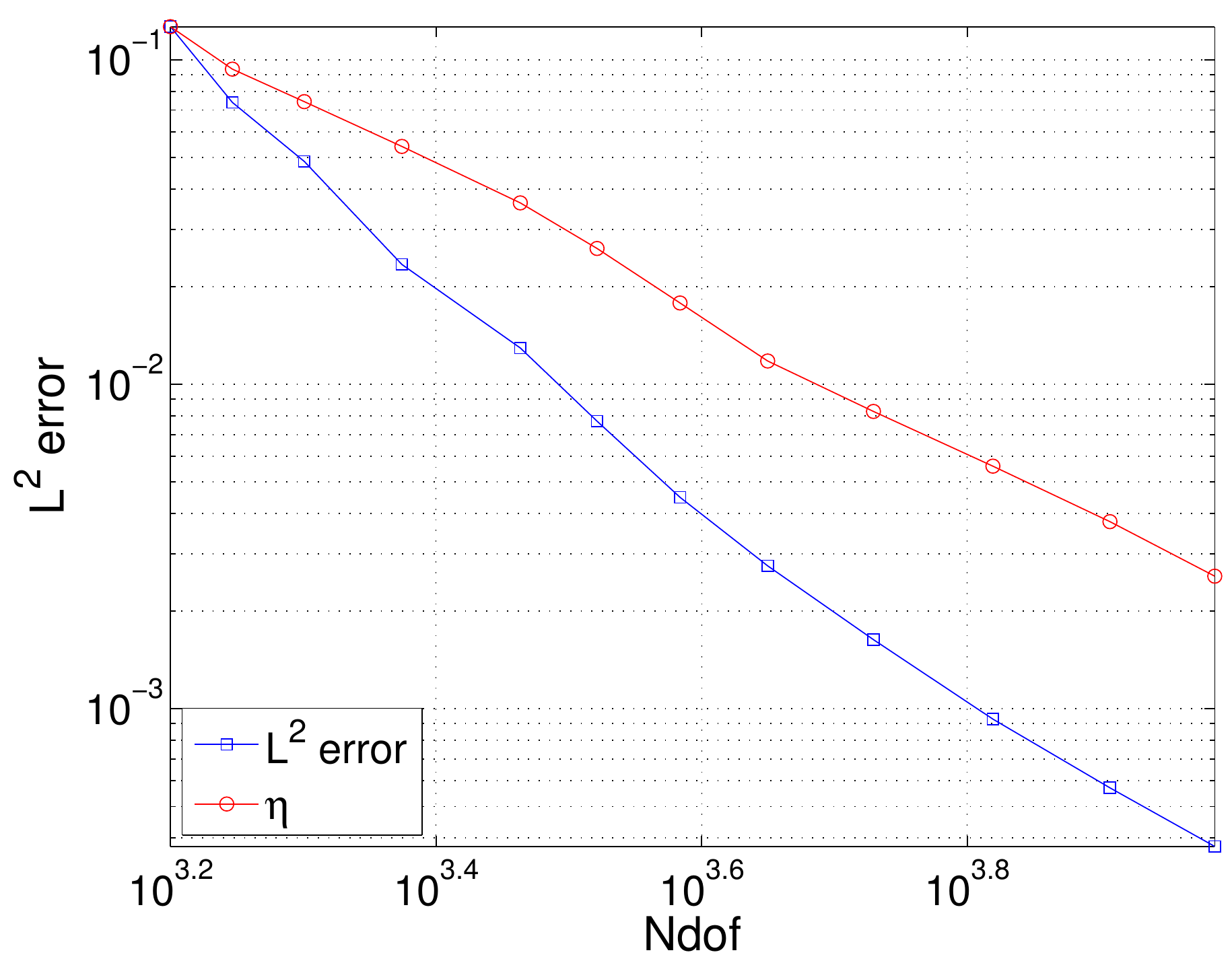}}&
\resizebox{0.4\textwidth}{!}{\includegraphics{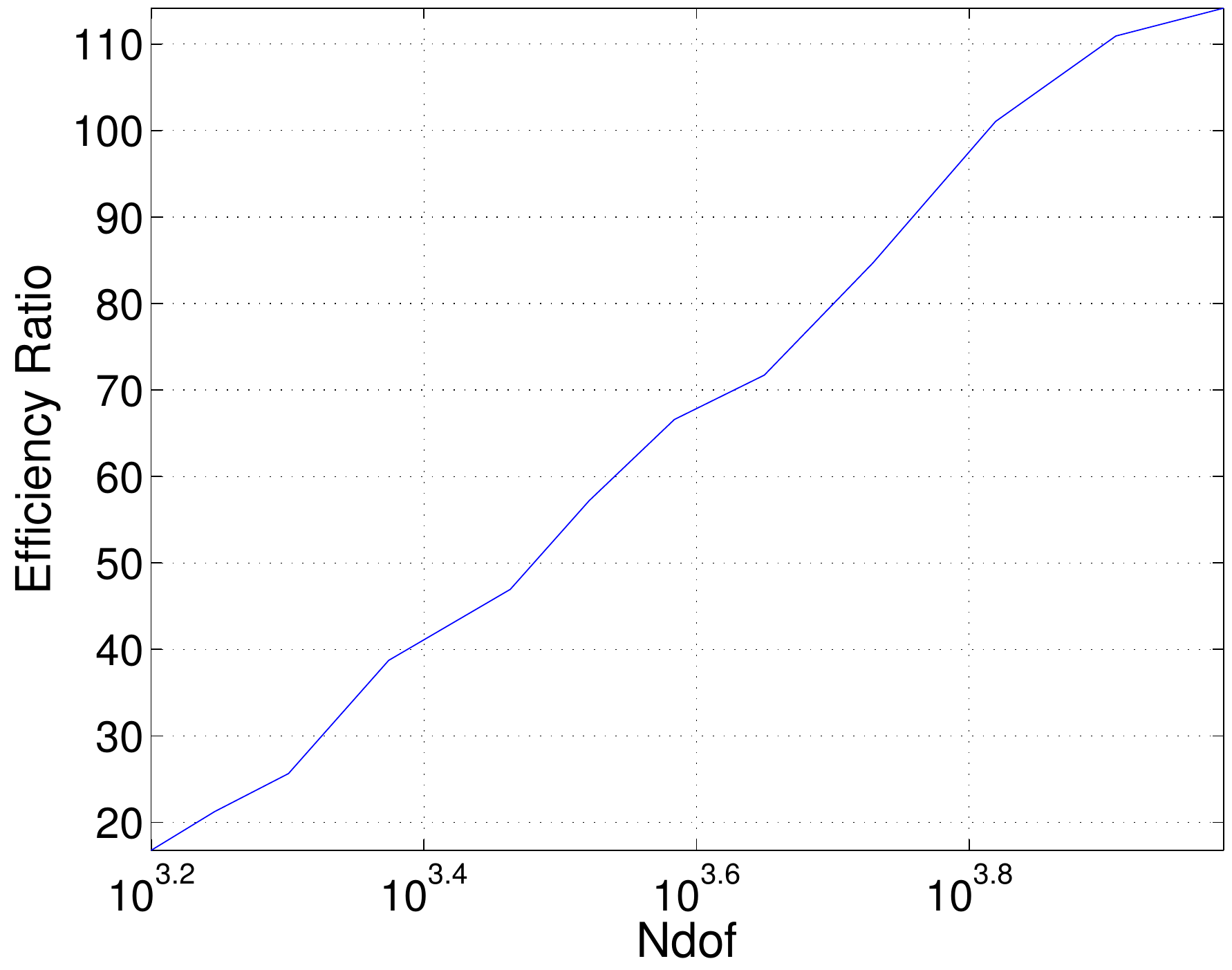}}\\%&
%\resizebox{0.3\textwidth}{!}{\includegraphics{./H1-ir-9}}
\end{tabular}
\caption{Results for total internal reflection when $p_K=5$ (top row), $p_K=7$ (middle row) and $p_K=9$ (bottom row).
Here we choose $s=1/2$.  This figure has the same layout as Fig.~\ref{L_smooth}. }
\label{IR}
\end{center}
\end{figure}

For our final results we return to the L-shaped domain and $p_K=9$.  We have seen that the efficiency of the indicator deteriorates as the mesh is refined when we take $s=1/6$ in the residual indicators.  We have also seen that the maximum
choice of $s$ is $s=1/2$ and we now test the indicator for $s=1/2$ for the smooth and singular Bessel function solutions. Results are shown in Fig.~\ref{peq9h}.  The efficiency in the $L^2$ norm is improved but still deteriorates as the mesh is refined.

\begin{figure}
\begin{center}
\begin{tabular}{ccc}
\resizebox{0.4\textwidth}{!}{\includegraphics{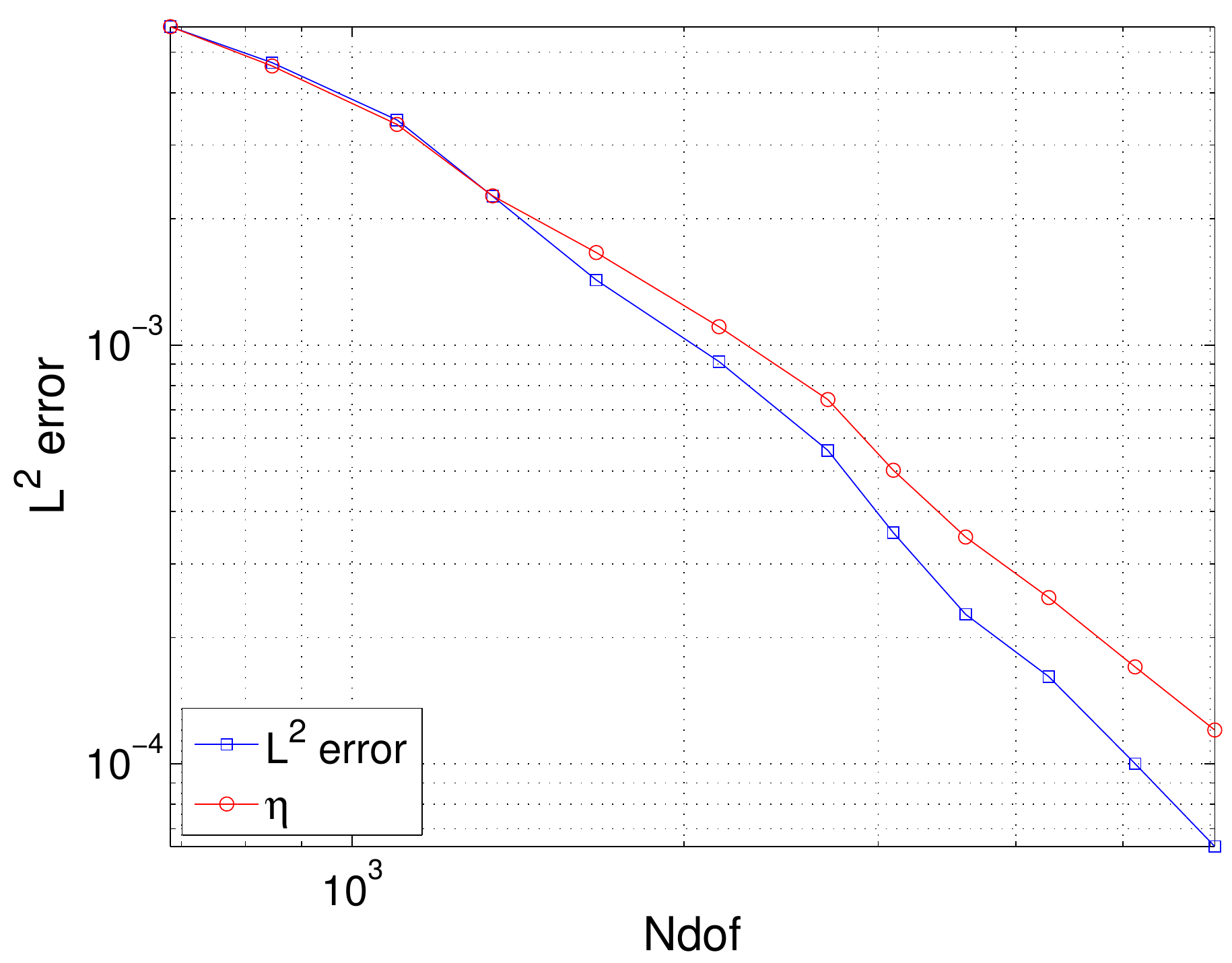}}&
\resizebox{0.4\textwidth}{!}{\includegraphics{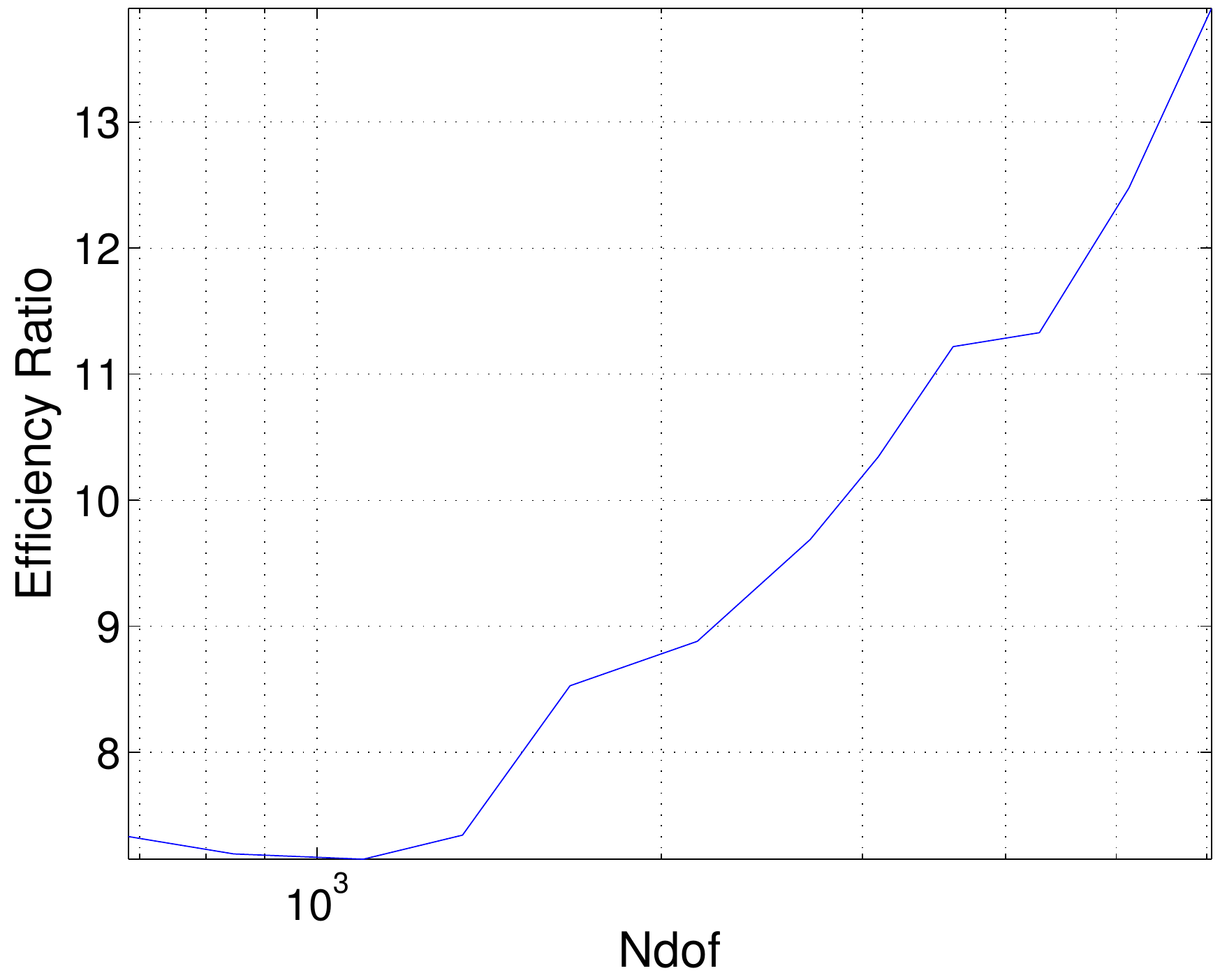}}\\%&
%\resizebox{0.3\textwidth}{!}{\includegraphics{./H1-L-s-9-h}}\\
\resizebox{0.4\textwidth}{!}{\includegraphics{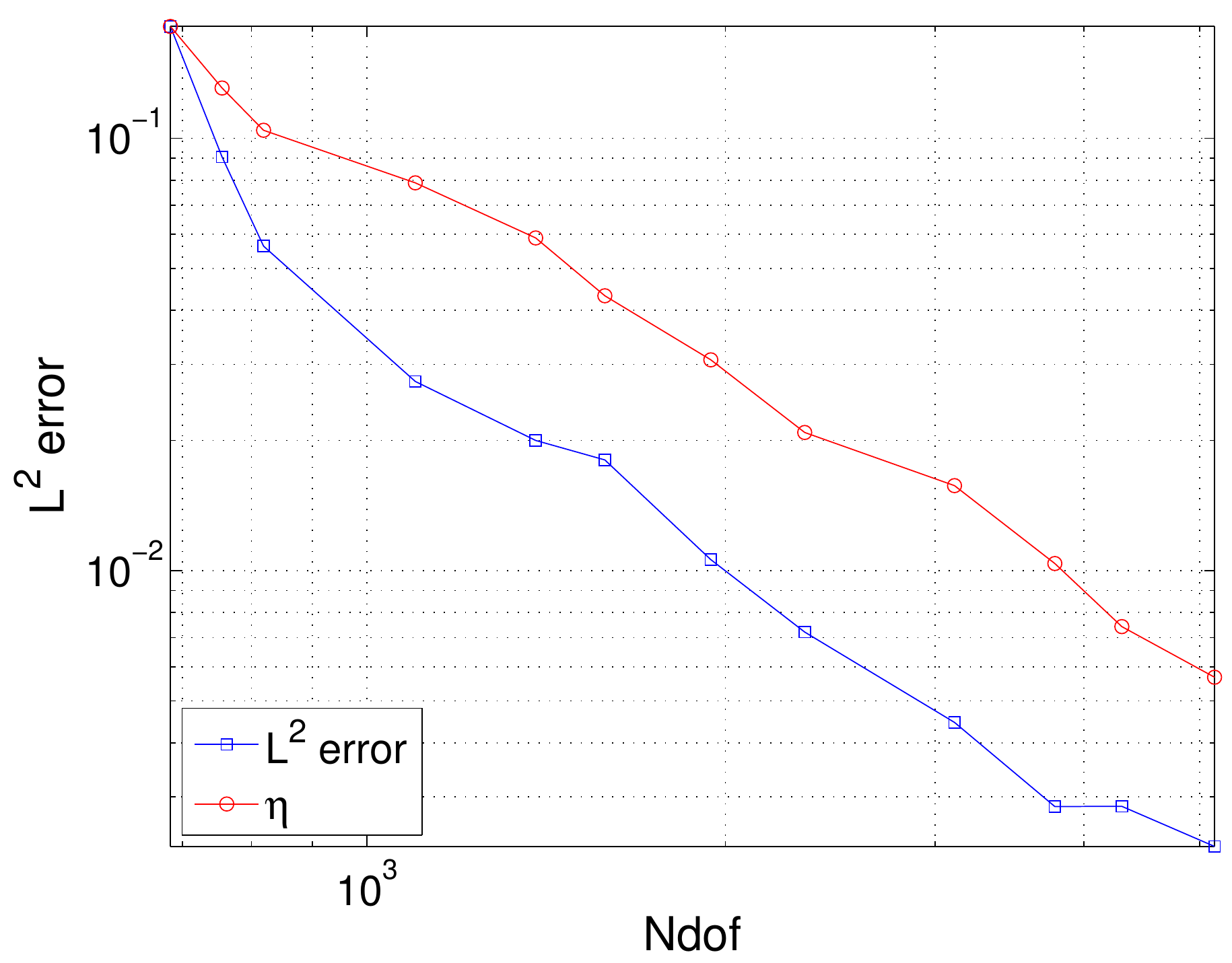}}&
\resizebox{0.4\textwidth}{!}{\includegraphics{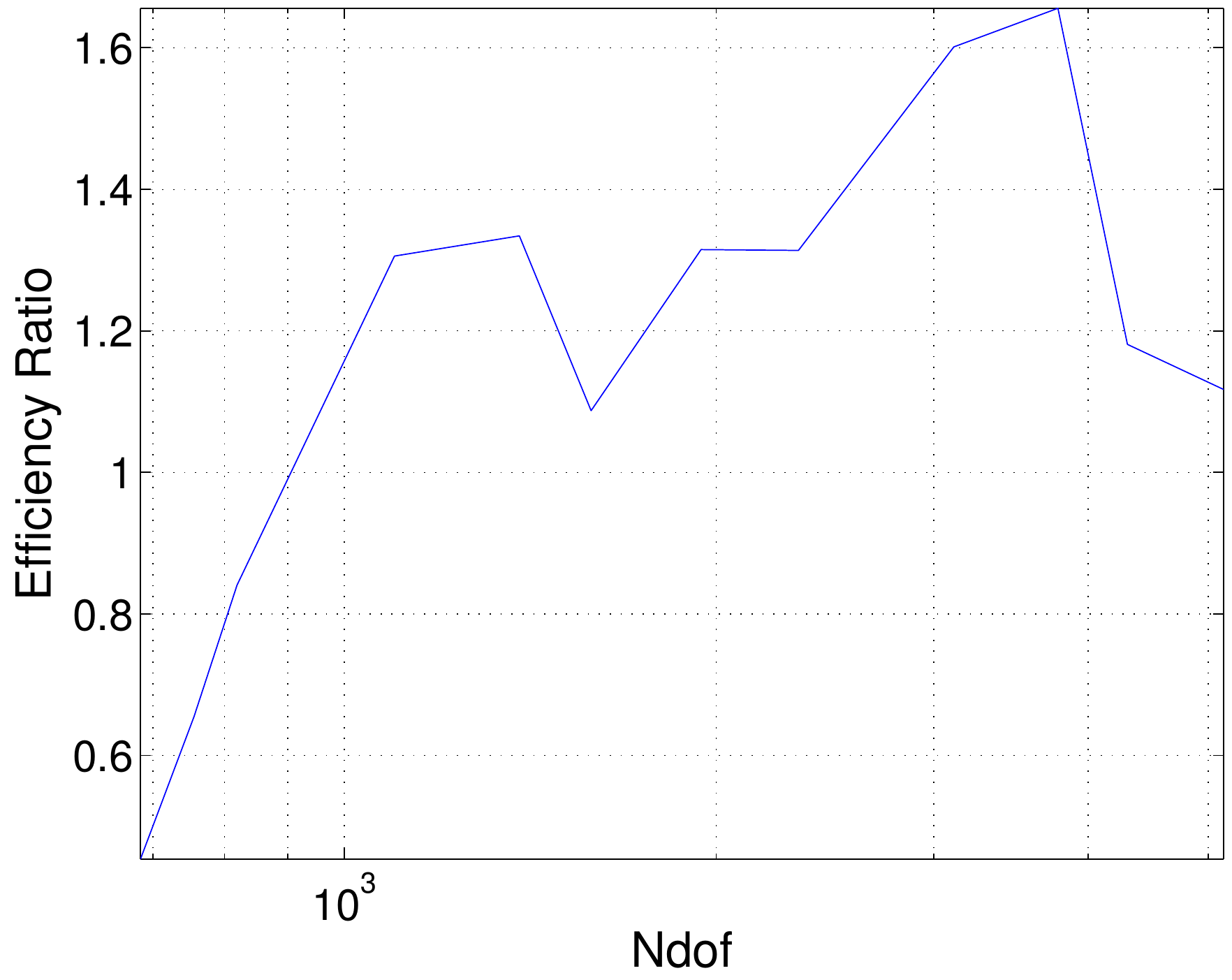}}%&
%\resizebox{0.3\textwidth}{!}{\includegraphics{./H1-L-ns-9-h}}
\end{tabular}
\caption{Results for $p_K=9$ and  $s=1/2$ on the L-shape domain. Top: smooth solution.  Bottom: singular solution. 
The columns of thus figure have the same layout as Fig.~\ref{L_smooth}. }
\label{peq9h}
\end{center}
\end{figure}

\section{Conclusion}
\label{conclusion.sec}
%\input conclusion
% !TEX root =  adaptadg.tex

We have derived two new a posteriori error indicators for the PWDG method based.  One is based on standard theory and the second is based on the  observation that plane wave
basis functions can approximate piecewise linear finite elements on a fine mesh.   Using the usual Doerfler marking strategy the estimators drive mesh adaptivity that gives  convergence for a smooth solution as well as 
coping with singularities and evanescent modes.  The indicators give apparently 
reliable estimates for the $L^2$ norm but even for the improved indicators  the efficiency tends to deteriorate as the mesh is refined.  

The indicators have a parameter $s$ that depends on the solution domain.  A safe choice is $s=0$, but better efficiency is obtained by taking larger $s$, and numerically $s=1/2$ appears to be a good choice.

Usually error is estimated in the energy  norm and we will investigate a posteriori error
indicators for the broken $H^1$ norm in a future publication.
\section{Acknowledgements}
The research of SK and PM is supported in part by NSF grant number DMS-1216620.
The research of TW is supported in part by NSF grant number DMS-1216674.

\bibliographystyle{siam}

\bibliography{PwDG1}

\end{document}